\newtheorem{defn}{Definition}[section]
\newtheorem{thm}[defn]{Theorem}
\newtheorem{lem}[defn]{Lemma}
\newtheorem{prop}[defn]{Proposition}
\newtheorem{cor}[defn]{Corollary}
\newtheorem{re}[defn]{Remark}
\newtheorem{no}[defn]{Notation}
\begin{document}
\title{{\bf Algebras of quotients and Martindale-like quotients of Leibniz algebras}}
\author{\normalsize \bf Chenrui Yao$^{1}$, Yao Ma$^{1}$, Liming Tang$^{2}$,  Liangyun Chen$^{1}$}
 \date{\small $^{1}$ School of Mathematics and Statistics, Northeast Normal University,  \\Changchun 130024,  China\\
$^{2}$ School of Mathematical Sciences,
Harbin Normal University,\\ Harbin 150025, China} \maketitle
\date{}

   {\bf\begin{center}{Abstract}\end{center}}

In this paper, the definitions of algebras of quotients and Martandale-like qoutients of Leibniz algebras are introduced and the interactions between the two quotients are determined. Firstly, some important properties which not only hold for a Leibniz algebras but also can been lifted to its algebras of quotients are investigated. Secondly, for any semiprime Leibniz algebra, its maximal algebra of quotients is constucted and a Passman-like characterization of the maximal algebra is described. Thirdly, the relationship between a Leibniz algebra and the associative algebra which is generated by left and right multiplication operators of the corresponding Leibniz algebras of quotients are examined. Finally, the definition of dense extensions and some vital properties about Leibnia algebras via dense extensions are introduced.

\noindent\textbf{Keywords:} \,  Leibniz algebras; Algebras of quotients; Martindale-like quotients; dense extensions.\\
\textbf{2010 Mathematics Subject Classification:} 17A32, 16N60, 17A60.
\renewcommand{\thefootnote}{\fnsymbol{footnote}}
\footnote[0]{ Corresponding author(L. Chen): chenly640@nenu.edu.cn.}
\footnote[0]{Supported by  NNSF of China (Nos. 11771069 and 11801066).}

\section{Introduction}
The notion of Leibniz algebras was first introduced by Loday in \cite{L1}, which is a generalization of Lie algebras where the skew-symmetry is omitted. This gives rise to two types of Leibniz algebras: left Leibniz algebra and right Leibniz algebra. A right(resp. left) Leibniz algebra is a vector space $L$ over $\mathbb{F}$ with a bilinear map $[\cdot, \cdot] : L \times L \rightarrow L$ satisfying for all $x, y, z$ in $L$
\[[x, [y, z]] = [[x, y], z] - [[x, z], y].(\rm{resp}.\;\it{[x, [y, z]] = [[x, y], z] + [y, [x, z]]}.)\]
That is to say, the right(resp. left) multiplication operators are derivations of $L$. If $L$ is both a right and left Leibniz algebra, then $L$ is called a symmetric Leibniz algebra, introduced by Geoffrey and Gaywalee in \cite{GG}. In recent years, study about Leibniz algebras is becoming more and more popular and many results of Lie algebras have been generalized to Leibniz algebras, such as references \cite{B1, BCG}. For more references about Leibniz algebras, one can see \cite{BH, BF, BMS}.

The notion of ring of quotients was introduced by Utumi in \cite{U1}, which played an important role in the development of the theories of associative and commutative rings. He showed that every associative ring without right zero divisors has a maximal left quotients ring and constructed it. Inspired by \cite{U1}, Siles Molina studied the algebras of quotients of Lie algebras in \cite{S1}. Martindale rings of quotients were introduced by Martindale in 1969 for prime rings in \cite{M1}, which was designed for applications to rings satisfying a generalized polynomial identity. In \cite{GG1}, Garc\'{i}a and G\'{o}mez defined Martindale-like quotients for Lie triple systems with respect to power filters of sturdy ideals and constructed the maximal system of quotients in the nondegenerate cases. More research about quotients of Lie systems refer in references \cite{GC, MCL}. In \cite{M2}, Mart\'{i}nez derived a necessary and sufficient Ore type condition for a Jordan algebra to have a ring of fractions, which is the origin of algebras of quotients of Jordan systems. In \cite{AGG, AM, GG2, M3}, authors researched algebras of quotients and Martindale-like quotients of Jordan systems respectively. Inspired by \cite{S1, U1}, we will introduce the notion of algebras of quotients of Leibniz algebras and prove the properties, such as semiprimeness and primeness,  can be lifted from a Leibniz algebra to its algebra of quotients in this paper.

As we know, one popular topic of research is the relationship between algebras of quotients of non-associative algebras and associative algebras in recent years. In \cite{PS}, authors examined the relationship between associative and Lie algebras of quotients. Inspired by them, we'll explore whether there exists a relationship between associative and Leibniz algebras of quotients. The answer is affirmative and we will describe in detail in the following.

The paper is organised as follows: in Section \ref{se:2}, some basic definition are introduced and some elemental propositions are given. In Section \ref{se:3}, the definition of quotients and ideally absorbed are introduced first, which proved to be equivalent in the following, presented as Theorem \ref{thm:3.10}. What's more, the concept of Martindale-like quotients are introduced and the relationship between them are determined(See Proposition \ref{prop:3.16}). In Section \ref{se:4}, we focus on semiprime Leibniz algebras and construct the maximal algebras of quotients of them and characterize them. In Section \ref{se:5}, we mainly study the relationship between algebras of quotients of Leibniz algebras and associative algebras generated by left and right multiplication operators of Leibniz algebras, and get the main result that $\mathscr{A}(Q)$, the associative algebra generated by left and right multiplication operators of $Q$, is a left quotients of $\mathscr{A}_{0}$ if $Q$ is an algebra of quotients of $L$, where $\mathscr{A}_{0}$ denotes the subalgebra of $\mathscr{A}(Q)$ satisfying $\mathscr{A}_{0}(L) \subseteq L$, written as Theorem \ref{thm:5.11}. In Section \ref{se:6}, we introduce the concept of dense extension first of all. Next, we get a proposition about dense extension, Proposition \ref{prop:6.5}, that every $I \subseteq Q$ is also dense for every essential ideal $I$ of $L$ if $L \subseteq Q$ is a dense extension and $Q$ is an algebra of quotients of $L$. Also, we find the possible converse, Proposition \ref{prop:6.9}, to Theorem \ref{thm:5.11} in the presence of dense extensions of Leibniz algebras.

In Sections \ref{se:2}, \ref{se:3} and \ref{se:4}, we suppose that the Leibniz algebras are right Leibniz algebras. While in Sections \ref{se:5} and \ref{se:6}, we assume that all Leibniz algebras are symmetric Leibniz algebras.
\section{Preliminaries}\label{se:2}
\begin{defn}
Let $L$ be a Leibniz algebra and $H$ a subset of $L$.
\begin{enumerate}[(1)]
\item The vector space $lan_{L}(H) = \{x \in L\;|\;[x, y] = 0,\;\forall y \in H\}$ is called the left annihilator of $H$ in $L$. We denote $lan_{L}(L)$ by $lan(L)$.
\item The vector space $ran_{L}(H) = \{x \in L\;|\;[y, x] = 0,\;\forall y \in H\}$ is called the right annihilator of $H$ in $L$. We denote $ran_{L}(L)$ by $ran(L)$.
\item The vector space $\rm{Ann}\it{_{L}(H)} = lan_{L}(H) \cap ran_{L}(H)$ is called the annihilator of $H$ in $L$. We denote $\rm{Ann}\it{_{L}(L)}$ by $\rm{Ann}\it{(L)}$.
\end{enumerate}
\end{defn}
\begin{re}\label{re:2.4}
\begin{enumerate}[(1)]
\item $ran_{L}(H)$ is an ideal of $L$ if $H$ is a right ideal of $L$.
\item $\rm{Ann}\it{_{L}(H)}$ is an ideal of $L$ if $H$ is an ideal of $L$.
\end{enumerate}
\end{re}
\begin{proof}
(1) For any $z \in L$, $x \in ran_{L}(H)$ and $y \in H$,
\[[y, [x, z]] = [[y, x], z] - [[y, z], x] = 0,\]
\[[y, [z, x]] = [[y, z], x] - [[y, x], z] = 0,\]
which imply that both $[x, z]$ and $[z, x]$ belong to $ran_{L}(H)$, i.e., $ran_{L}(H)$ is an ideal of $L$.

(2) For any $z \in L$, $x \in \rm{Ann}\it{_{L}(H)}$ and $y \in H$,
\[[y, [x, z]] = [[y, x], z] - [[y, z], x] = 0,\]
\[[[x, z], y] = [[x, y], z] + [x, [z, y]] = 0,\]
which imply that $[x, z] \in \rm{Ann}\it{_{L}(H)}$, i.e., $\rm{Ann}\it{_{L}(H)}$ is a right ideal of $L$. Similarly, we can show that $\rm{Ann}\it{_{L}(H)}$ is a left ideal of $L$. Thus, $\rm{Ann}\it{_{L}(H)}$ is an ideal of $L$.
\end{proof}
\begin{defn}
Suppose that $L$ is a Leibniz algebra. Then $L$ is semiprime if for any nonzero ideal $I$ of $L$, $[I, I] \neq \{0\}$ and prime if for any two nonzero ideals $I$, $J$ of $L$, $[I, J] \neq \{0\}$.
\end{defn}
\begin{prop}\label{prop:2.6}
For a Leibniz algebra $L$, we have $\rm{Ann}\it{(L)} = ran(L) = \{\rm{0}\}$ if $L$ is semiprime.
\end{prop}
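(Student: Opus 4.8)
The plan is to reduce everything to the single statement $ran(L)=\{0\}$, since the annihilator satisfies $\mathrm{Ann}(L)=lan(L)\cap ran(L)\subseteq ran(L)$; so once $ran(L)$ is shown to vanish, the equality $\mathrm{Ann}(L)=\{0\}$ follows immediately from this inclusion.

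To handle $ran(L)$, I would proceed in three short steps. First, $ran(L)=ran_{L}(L)$ is an ideal of $L$ by Remark \ref{re:2.4}(1) applied with $H=L$, since $L$ is trivially a right ideal of itself. Second, I would check that $ran(L)$ brackets trivially with itself: if $x\in ran(L)$, then by definition $[y,x]=0$ for all $y\in L$, and in particular $[x',x]=0$ for every $x'\in ran(L)$, whence $[ran(L),ran(L)]=\{0\}$. Third, I would apply the hypothesis: since $ran(L)$ is an ideal with trivial self-bracket and $L$ is semiprime, the defining property of semiprimeness forces $ran(L)=\{0\}$ (a nonzero ideal would have to have nonzero self-bracket). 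Combining this with the inclusion above yields $\mathrm{Ann}(L)=ran(L)=\{0\}$.

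I do not expect a genuine obstacle in this argument; the only point that requires care is the orientation of the bracket in the second step. The computation uses that elements of $ran(L)$ occupy the \emph{right} slot, so that $[x',x]=0$ comes directly from the definition of the right annihilator, and one should resist conflating this with the a priori different condition $[x,x']=0$. Everything else is formal, with semiprimeness doing the essential work.
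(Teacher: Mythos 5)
Your proposal is correct and follows essentially the same route as the paper's proof: observe that $ran(L)$ is an ideal, that $[ran(L),ran(L)]\subseteq[L,ran(L)]=\{0\}$ by the definition of the right annihilator, and then invoke semiprimeness to force $ran(L)=\{0\}$, from which $\mathrm{Ann}(L)=lan(L)\cap ran(L)=\{0\}$ follows. The only cosmetic difference is that the paper phrases the argument as a contradiction while you argue directly, and you make explicit the appeal to Remark \ref{re:2.4}(1) that the paper leaves implicit.
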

\begin{proof}
Otherwise, $ran(L)$ is a nonzero ideal of $L$ such that $[ran(L), ran(L)] \subseteq [L, ran(L)] = \{0\}$ according to the definition of $ran(L)$, which contradicts with $L$ being semiprime. Thus $ran(L) = \{0\}$, consequently $\rm{Ann}\it{(L)} = \{\rm{0}\}$.
\end{proof}
\begin{defn}
An ideal $I$ of a Leibniz algebra $L$ is called essential if for any nonzero ideal $J$ of $L$, $I \cap J \neq \{0\}$ and sturdy if $\rm{Ann}\it{_{L}(I)} = \{\rm{0}\}$.
\end{defn}
\begin{prop}\label{prop:2.8}
Suppose that $L$ is a Leibniz algebra and $I$ an ideal of $L$.
\begin{enumerate}[(1)]
\item If $\rm{Ann}\it{_{L}(I)} = \{\rm{0}\}(resp. \it{ran_{L}(I)} = \{\rm{0}\})$, then $I$ is essential.
\item If $L$ is semiprime, then $I \cap \rm{Ann}\it{_{L}(I)} = \{\rm{0}\}(resp. \it{I \cap ran_{L}(I)} = \{\rm{0}\})$ and $I$ is essential if and only if $\rm{Ann}\it{_{L}(I)} = \{\rm{0}\}(resp. \it{ran_{L}(I)} = \{\rm{0}\})$.
\end{enumerate}
\end{prop}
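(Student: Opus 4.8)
The plan is to reduce both parts to one simple observation: for any two ideals $I$ and $J$ of $L$, the product $[I,J]$ lies in both $I$ and $J$, hence $[I,J] \subseteq I \cap J$, and likewise $[J,I] \subseteq I \cap J$. For Part (1) I would then argue contrapositively. If $I$ is not essential, pick a nonzero ideal $J$ with $I \cap J = \{0\}$. Then $[I,J] \subseteq I \cap J = \{0\}$ and $[J,I] \subseteq I \cap J = \{0\}$. The first equality says that every element of $J$ annihilates $I$ on the right, so $J \subseteq ran_{L}(I)$; the second says every element of $J$ annihilates $I$ on the left, so $J \subseteq lan_{L}(I)$. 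Hence $J \subseteq lan_{L}(I) \cap ran_{L}(I) = \mathrm{Ann}_{L}(I)$, while also $J \subseteq ran_{L}(I)$ on its own. Either hypothesis $\mathrm{Ann}_{L}(I) = \{0\}$ or $ran_{L}(I) = \{0\}$ then forces $J = \{0\}$, a contradiction; this settles both versions at once.

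For Part (2), I would first establish $I \cap \mathrm{Ann}_{L}(I) = \{0\}$ (resp. $I \cap ran_{L}(I) = \{0\}$). By Remark \ref{re:2.4} both $\mathrm{Ann}_{L}(I)$ and $ran_{L}(I)$ are ideals of $L$ (the latter since an ideal is in particular a right ideal), so $K := I \cap \mathrm{Ann}_{L}(I)$ and $K' := I \cap ran_{L}(I)$ are ideals. Every $x \in K$ lies in $\mathrm{Ann}_{L}(I)$ and in $I$, so $[K,K] \subseteq [\mathrm{Ann}_{L}(I), I] = \{0\}$; similarly every $x \in K'$ lies in $ran_{L}(I)$ and in $I$, giving $[K',K'] \subseteq [I, ran_{L}(I)] = \{0\}$. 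Since $L$ is semiprime, an ideal with trivial self-bracket must vanish, so $K = K' = \{0\}$.

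Finally, for the equivalence in Part (2): the backward direction is precisely Part (1) (and needs no semiprimeness), while for the forward direction, if $I$ is essential then the ideal $\mathrm{Ann}_{L}(I)$ (resp. $ran_{L}(I)$), having zero intersection with $I$ by the previous paragraph, must itself be $\{0\}$ by the definition of essentiality. The computations are all immediate; the only care needed is in the bookkeeping of left versus right annihilators and in invoking Remark \ref{re:2.4} to guarantee that the annihilators are genuine ideals, so that essentiality of $I$ can legitimately be applied to them.
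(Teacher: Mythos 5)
Your proposal is correct and follows essentially the same route as the paper: part (1) by producing a nonzero ideal $J$ with $I \cap J = \{0\}$ and showing it is annihilated into $\mathrm{Ann}_{L}(I)$ (resp. $ran_{L}(I)$), and part (2) by noting that $I \cap \mathrm{Ann}_{L}(I)$ (resp. $I \cap ran_{L}(I)$) is an ideal with trivial self-bracket, hence zero by semiprimeness, with the equivalence then following from part (1) and essentiality. The only cosmetic difference is that the paper argues in (1) with a single nonzero element $x \in J$ rather than with the whole ideal $J$, which is the same argument.
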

\begin{proof}
(1) Otherwise, there exists a nonzero ideal $J$ of $L$ such that $I \cap J = \{0\}$. Take $0 \neq x \in J$. We get
\[[x, I] \subseteq I \cap J = \{0\},\quad[I, x] \subseteq I \cap J = \{0\},\]
which implies that $0 \neq x \in \rm{Ann}\it{_{L}(I)}(\rm{resp}. 0 \neq \it{x \in ran_{L}(I)})$. Contradiction.

(2) Suppose that $I \cap \rm{Ann}\it{_{L}(I)} \neq \{\rm{0}\}(resp. \it{I \cap ran_{L}(I)} \neq \{\rm{0}\})$. Then we can see that $I \cap \rm{Ann}\it{_{L}(I)}(\rm{resp}. \it{I \cap ran_{L}(I)})$ is a nonzero ideal of $L$ satisfying
\[[I \cap \rm{Ann}\it{_{L}(I)}, I \cap \rm{Ann}\it{_{L}(I)}] \subseteq [I, \rm{Ann}\it{_{L}(I)}] = \{\rm{0}\},\]
\[(\rm{resp}. \it{[I \cap ran_{L}(I), I \cap ran_{L}(I)] \subseteq [I, ran_{L}(I)]} = \{\rm{0}\}),\]
which contradicts with $L$ being semiprime.

Now suppose that $I$ is essential. If $\rm{Ann}\it{_{L}(I)} \neq \{\rm{0}\}(resp. \it{ran_{L}(I)} \neq \{\rm{0}\})$, then $I \cap \rm{Ann}\it{_{L}(I)} \neq \{\rm{0}\}(resp. \it{I \cap ran_{L}(I)} \neq \{\rm{0}\})$, contradiction. Combining with (1), we come to the conclusion.
\end{proof}
\begin{prop}\label{prop:2.9}
Suppose that $L$ is a Leibniz algebra. Then $L$ is prime if and only if for every nonzero ideal $I$ of $L$, $ran_{L}(I) = \{0\}$.
\end{prop}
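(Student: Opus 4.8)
The plan is to prove both implications directly, leaning on Remark~\ref{re:2.4}(1) to guarantee that right annihilators of ideals are themselves ideals, so that the definition of primeness can legitimately be applied to them.

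For the forward direction, I would suppose $L$ is prime and let $I$ be an arbitrary nonzero ideal. Since $I$ is in particular a right ideal, Remark~\ref{re:2.4}(1) tells us that $ran_{L}(I)$ is an ideal of $L$. The key observation is that $[I, ran_{L}(I)] = \{0\}$, which is immediate from the definition of $ran_{L}(I)$: every $x \in ran_{L}(I)$ satisfies $[y, x] = 0$ for all $y \in I$. If $ran_{L}(I)$ were nonzero, then $I$ and $ran_{L}(I)$ would be two nonzero ideals with $[I, ran_{L}(I)] = \{0\}$, contradicting primeness. Hence $ran_{L}(I) = \{0\}$.

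For the converse, I would assume that $ran_{L}(I) = \{0\}$ for every nonzero ideal $I$, and take two nonzero ideals $I, J$. Arguing by contradiction, if $[I, J] = \{0\}$, then by the definition of the right annihilator $J \subseteq ran_{L}(I)$; since $I$ is nonzero, the hypothesis forces $ran_{L}(I) = \{0\}$ and hence $J = \{0\}$, contradicting the choice of $J$. Therefore $[I, J] \neq \{0\}$, i.e., $L$ is prime.

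There is no serious obstacle here; the proof is short once one notices the twin facts that $ran_{L}(I)$ is an ideal and that it annihilates $I$ on the right by construction. The only point requiring genuine care is to invoke Remark~\ref{re:2.4}(1), so that $ran_{L}(I)$ qualifies as a legitimate ideal in the definition of primeness; without that step, the identity $[I, ran_{L}(I)] = \{0\}$ by itself would not immediately produce the required contradiction.
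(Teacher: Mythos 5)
Your proof is correct and follows essentially the same route as the paper: both directions are argued by contradiction, using that $[I, ran_{L}(I)] = \{0\}$ by construction for the forward implication and that $[I, J] = \{0\}$ forces $J \subseteq ran_{L}(I)$ for the converse. The only difference is that you explicitly cite Remark~\ref{re:2.4}(1) to justify that $ran_{L}(I)$ is an ideal, a fact the paper uses implicitly; this is a reasonable bit of added care, not a different argument.
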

\begin{proof}
Suppose that $L$ is prime. If there exists a nonzero ideal $J$ of $L$ with $ran_{L}(J) \neq \{0\}$, then $[J, ran_{L}(J)] = \{0\}$ according to the definition of $ran_{L}(J)$, which contradicts with $L$ being prime.

Conversely, suppose that for every nonzero ideal $I$ of $L$, $ran_{L}(I) = \{0\}$. If $L$ isn't prime, there exists two nonzero ideals $I, J$ of $L$ such that $[J, I] = \{0\}$, which implies that $I \subseteq ran_{L}(J)$. Contradiction.
\end{proof}
\section{Algebras of quotients of Leibniz algebras}\label{se:3}
Inspired by the notion of algebras of quotients of associative algebras and Lie algebras in \cite{S1, U1}, we introduce the notion of algebras of quotients of Leibniz algebras.

Suppose that $L$ is a subalgebra of the Leibniz algebra $Q$. For any $q \in Q$, set
\[_{L}(q) = \mathbb{F}q + \left\{\sum_{i = 1}^{n}\xi_{i}(q)\mid \xi_{i} \in \mathscr{A}(L)\;\rm{with}\;\it{n} \in \mathbb{N}\right\}\]
where $\mathscr{A}(L)$ denotes the associative algebra generated by $\{R_{x}$, $L_{y} \mid x, y \in L\}$.

That is, $_{L}(q)$ is the linear span in $Q$ of the elements of the form $\xi(q)$ and $q$ where $\xi \in \mathscr{A}(L)$. It's obvious that for any $x, y \in L$, both $R_{x}(_{L}(q))$ and $L_{y}(_{L}(q))$ still be contained in $_{L}(q)$. Now define
\[(L : q) = \{x \in L \mid [x, _{L}(q)] \subseteq L,\;[_{L}(q), x] \subseteq L\}.\]
Clearly, $(L : q)$ is equal to $L$ if $q \in L$.
\begin{prop}\label{prop:3.1}
Suppose that $L$ is a subalgebra of $Q$. Then $(L : q)$ is an ideal of $L$. Moreover, it's maximal among the ideals $I$ of $L$ such that $[I, q] \subseteq L$ and $[q, I] \subseteq L$.
\end{prop}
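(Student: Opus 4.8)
The plan is to verify the two assertions separately. Since $(L:q)$ is cut out of $L$ by the linear conditions $[x, {}_{L}(q)] \subseteq L$ and $[{}_{L}(q), x] \subseteq L$, it is immediately a subspace of $L$; the real work lies in closure under bracketing with an arbitrary $z \in L$ on both sides, and in the maximality statement.

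For the ideal property, I would fix $x \in (L:q)$ and $z \in L$ and check that both $[x,z]$ and $[z,x]$ again satisfy the two defining conditions, i.e. that $[w,[x,z]]$, $[[x,z],w]$, $[w,[z,x]]$ and $[[z,x],w]$ all lie in $L$ for every $w \in {}_{L}(q)$. The two essential tools are the right Leibniz identity $[a,[b,c]]=[[a,b],c]-[[a,c],b]$ and its consequence that every right multiplication $R_{w}$ is a derivation, together with the invariance already noted in the text, namely $R_{z}({}_{L}(q)) \subseteq {}_{L}(q)$ and $L_{z}({}_{L}(q)) \subseteq {}_{L}(q)$. For instance, expanding $[w,[x,z]] = [[w,x],z]-[[w,z],x]$ one sees that $[w,x] \in L$ (hence $[[w,x],z] \in L$) because $x \in (L:q)$, while $[w,z] \in {}_{L}(q)$ by invariance (hence $[[w,z],x] \in L$) again because $x \in (L:q)$; expanding $[[x,z],w] = R_{w}([x,z]) = [[x,w],z]+[x,[z,w]]$ via the derivation property disposes of the reversed bracket similarly, and the two brackets involving $[z,x]$ are treated the same way.

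For maximality, let $I$ be any ideal of $L$ with $[I,q]\subseteq L$ and $[q,I]\subseteq L$; I want $I \subseteq (L:q)$, i.e. $[x,\xi(q)] \in L$ and $[\xi(q),x] \in L$ for every $x \in I$ and every $\xi \in \mathscr{A}(L)$, the case $\xi(q)=q$ being exactly the hypothesis on $I$. The natural route is induction on the number of multiplication operators composing $\xi$. Writing $\xi = M\zeta$ with $M \in \{R_{a},L_{a} : a\in L\}$ the outermost factor and setting $v = \zeta(q)$, the key point is that $I$ is an ideal, so $[x,a]$ and $[a,x]$ lie in $I$; after expanding $[x, M(v)]$ and $[M(v),x]$ by the Leibniz identity and the derivation property, every resulting term is either of the form $[i',v]$ or $[v,i']$ with $i' \in I$ and $v$ of strictly smaller degree (handled by the inductive hypothesis), or a bracket of two elements of $L$ (automatically in $L$).

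The main obstacle I anticipate is organising this induction cleanly. The right Leibniz identity is asymmetric, so the $R_{a}$ and $L_{a}$ cases must be carried out separately, and the inductive hypothesis must be formulated uniformly over all $x \in I$ rather than for a single element, so that the ideal-generated elements $[x,a],[a,x] \in I$ can be fed back into it. Once the hypothesis is stated as ``for all $x \in I$ and all $v = \zeta(q)$ with $\zeta$ of degree $< n$, both $[x,v]$ and $[v,x]$ lie in $L$'', each inductive step reduces to the same bookkeeping as in the ideal part, and the argument closes.
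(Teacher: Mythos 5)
Your proposal is correct and matches the paper's own argument essentially step for step: the ideal property is verified via the right Leibniz identity and the derivation property of right multiplications together with the invariance of ${}_{L}(q)$ under $R_{z}$ and $L_{z}$, and maximality is proved by the same induction on the length of the word $\xi \in \mathscr{A}(L)$, peeling off the outermost operator and feeding $[x,a],[a,x]\in I$ back into an induction hypothesis stated uniformly over $I$. The paper's proof organizes the induction exactly as you anticipate, so no gap remains.
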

\begin{proof}
For any $x \in (L : q)$ and $y \in L$, we have
\[[[x, y], _{L}(q)] = [[x, _{L}(q)], y] + [x, [y, _{L}(q)]] \subseteq L,\]
\[[_{L}(q), [x, y]] = [[_{L}(q), x], y] - [[_{L}(q), y], x] \subseteq L,\]
which imply that $[x, y] \in (L : q)$. Similarly, we have $[y, x] \in (L : q)$. Hence, $(L : q)$ is an ideal of $L$.

Suppose that $I$ is an ideal of $L$ such that $[I, q] \subseteq L$ and $[q, I] \subseteq L$. Our goal is to show that $[I, _{L}(q)] \subseteq L$ and $[_{L}(q), I] \subseteq L$.

For any $y \in _{L}(q)$ with $y \neq q$, there exist $\xi_{1}, \cdots, \xi_{n} \in \mathscr{A}(L)$ such that $y = \sum_{i = 1}^{n}\xi_{i}(q)$. Moreover, for each $1 \leq i \leq n$, there exist $\eta_{i}^{1}, \cdots, \eta_{i}^{r_{i}}$ such that $\xi_{i} = \eta_{i}^{r_{i}}\cdots\eta_{i}^{1}$ where $\eta_{i}^{j} = R_{u}$ or $L_{v}$ for some $u, v \in L$ for every $1 \leq j \leq r_{i}$. Indeed, we only need to show that for each $1 \leq i \leq n$, $[I, \xi_{i}(q)] \subseteq L$ and $[\xi_{i}(q), I] \subseteq L$. Let's show that by induction on $r_{i}$.

When $r_{i} = 1$, if $\eta_{i}^{1} = R_{x}$ for some $x \in L$, we have
\[[I, \xi_{i}(q)] = [I, \eta_{i}^{1}(q)] = [I, R_{x}(q)] = [I, [q, x]] = [[I, q], x] - [[I, x], q] \subseteq L,\]
\[[\xi_{i}(q), I] = [\eta_{i}^{1}(q), I] = [R_{x}(q), I] = [[q, x], I] = [[q, I], x] + [q, [x, I]] \subseteq L,\]
if $\eta_{i}^{1} = L_{y}$ for some $y \in L$, we have
\[[I, \xi_{i}(q)] = [I, \eta_{i}^{1}(q)] = [I, L_{y}(q)] = [I, [y, q]] = [[I, y], q] - [[I, q], y] \subseteq L,\]
\[[\xi_{i}(q), I] = [\eta_{i}^{1}(q), I] = [L_{y}(q), I] = [[y, q], I] = [[y, I], q] + [y, [q, I]] \subseteq L.\]

Suppose that the conclusion is valid when $r_{i} = k$. When $r_{i} = k + 1$, denote $\eta_{i}^{k}\cdots\eta_{i}^{1}(q)$ by $z$. By induction hypothesis, $[I, z] \subseteq L$ and $[z, I] \subseteq L$. If $\eta_{i}^{k + 1} = R_{x}$ for some $x \in L$, we have
\[[I, \xi_{i}(q)] = [I, \eta_{i}^{k + 1}\eta_{i}^{k}\cdots\eta_{i}^{1}(q)] = [I, \eta_{i}^{k + 1}(z)] = [I, [z, x]] = [[I, z], x] - [[I, x], z] \subseteq L,\]
\[[\xi_{i}(q), I] = [\eta_{i}^{k + 1}\eta_{i}^{k}\cdots\eta_{i}^{1}(q), I] = [\eta_{i}^{k + 1}(z), I] = [[z, x], I] = [[z, I], x] + [z, [x, I]] \subseteq L.\]
Similarly, we have both $[I, \xi_{i}(q)]$ and $[\xi_{i}(q), I]$ are contained in $L$ if $\eta_{i}^{k + 1} = L_{y}$ for some $y \in L$.

Therefore, $I \subseteq (L : q)$ and the proof is completed.
\end{proof}
\begin{defn}
Suppose that $L$ is a subalgebra of $Q$. Then $Q$ is called an algebra of quotients, if given $p, q \in Q$ with $p \neq 0$, there exists $x \in L$ such that $[x, p] \neq 0$ and $x \in (L : q)$ or there exists $y \in L$ such that $[p, y] \neq 0$ and $y \in (L : q)$.
\end{defn}
\begin{prop}\label{prop:3.3}
Let $L$ be a subalgebra of $Q$.
\begin{enumerate}[(1)]
\item If $\rm{Ann}\it{(L)} = \{\rm{0}\}$, then $L$ is an algebra of quotients of itself.
\item If $Q$ is an algebra of quotients of $L$, then $\rm{Ann}\it{_{Q}(L)} = \rm{Ann}\it{(L)} = \{\rm{0}\}$.
\end{enumerate}
\end{prop}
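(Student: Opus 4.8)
The plan is to read both parts directly off the definitions, since each amounts to unwinding the disjunction in the definition of an algebra of quotients together with the fact (already observed in the text) that $(L:q)=L$ whenever $q\in L$.

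For (1), I would set $Q=L$, so that $(L:q)=L$ for every $q\in L$. Given $p,q\in L$ with $p\neq 0$, the hypothesis $\mathrm{Ann}(L)=lan(L)\cap ran(L)=\{0\}$ forces $p\notin lan(L)$ or $p\notin ran(L)$. In the former case there is $y\in L$ with $[p,y]\neq 0$; in the latter there is $x\in L$ with $[x,p]\neq 0$. Since the chosen multiplier automatically lies in $L=(L:q)$, the defining condition for an algebra of quotients is met, and $L$ is an algebra of quotients of itself.

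For the first equality in (2), I would argue by contradiction. Suppose $0\neq a\in\mathrm{Ann}_{Q}(L)$, so that $[x,a]=0=[a,y]$ for all $x,y\in L$. Applying the quotients hypothesis to the pair $p=a$, $q=a$ would yield either some $x\in L$ with $[x,a]\neq 0$ and $x\in(L:a)$, or some $y\in L$ with $[a,y]\neq 0$ and $y\in(L:a)$; each alternative contradicts $a\in\mathrm{Ann}_{Q}(L)$. Hence $\mathrm{Ann}_{Q}(L)=\{0\}$. The equality $\mathrm{Ann}(L)=\{0\}$ then follows from the inclusion $\mathrm{Ann}(L)\subseteq\mathrm{Ann}_{Q}(L)$: any $a\in\mathrm{Ann}(L)$ lies in $L\subseteq Q$ and annihilates $L$ on both sides, so it belongs to $\mathrm{Ann}_{Q}(L)=\{0\}$.

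There is no serious obstacle in this argument; it is entirely definitional. The only points needing care are keeping the two horns of the disjunction straight and remembering that $q\in L$ makes $(L:q)$ equal to all of $L$, which is exactly what guarantees that the produced multiplier qualifies in the definition.
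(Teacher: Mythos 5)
Your proof is correct and follows essentially the same route as the paper: part (1) uses that $\mathrm{Ann}(L)=\{0\}$ prevents a nonzero $p$ from annihilating $L$ on both sides together with the observation that $(L:q)=L$ for $q\in L$, and part (2) applies the quotients condition to a putative nonzero annihilator element to reach a contradiction. The only cosmetic differences are that you phrase (2) as a contradiction where the paper argues contrapositively, and you deduce $\mathrm{Ann}(L)=\{0\}$ from the inclusion $\mathrm{Ann}(L)\subseteq\mathrm{Ann}_{Q}(L)$ where the paper says ``similarly''; both are equivalent.
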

\begin{proof}
(1) For any $x, y \in L$ with $x \neq 0$, $x \notin \rm{Ann}\it{(L)}$ by assumption. Then there exists $u \in L$ such that $[u, x] \neq 0$ or $v \in L$ such that $[x, v] \neq 0$. Obviously, $u, v \in (L : y)$.

(2) For any $0 \neq q \in Q$, there exists $x \in L$ such that $[q, x] \neq 0$ or $y \in L$ such that $[y, q] \neq 0$, which implies that $q \notin \rm{Ann}\it{_{Q}(L)}$. Hence $\rm{Ann}\it{_{Q}(L)} = \{\rm{0}\}$. Similarly we have $\rm{Ann}\it{(L)} = \{\rm{0}\}$.
\end{proof}
The above proposition says for a Leibniz algebra $L$, that $\rm{Ann}\it{(L)} = \{\rm{0}\}$ is a sufficient and necessary condition such that $L$ has algebras of quotients.

We will prove that some properties of a Leibniz algebra $L$ can be inherited by its algebras of quotients $Q$. Actually, $Q$ just needs a weaker condition.
\begin{defn}
Suppose that $L$ is a subalgebra of $Q$. Then $Q$ is called a weak algebra of quotients of $L$, if given $0 \neq q \in Q$, there exists $x \in L$ such that $0 \neq [q, x] \in L$ or $y \in L$ such that $0 \neq [y, q] \in L$.
\end{defn}
\begin{re}
Every algebra of quotients of a Leibniz algebra is a weak algebra of quotients.
\end{re}
\begin{prop}\label{prop:3.6}
Let $Q$ be a weak algebra of quotients of $L$.
\begin{enumerate}[(1)]
\item If $I$ is a nonzero ideal of $Q$, then $I \cap L$ is a nonzero ideal of $L$.
\item If $L$ is semiprime(prime), so is $Q$.
\end{enumerate}
\end{prop}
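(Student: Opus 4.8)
The plan is to establish (1) first and then obtain (2) as an immediate corollary, since the semiprime and prime conditions for $Q$ can be detected on intersections with $L$ once part (1) is available.

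For part (1), I would begin with the routine observation that $I \cap L$ is an ideal of $L$. Indeed, if $a \in I \cap L$ and $x \in L$, then $[a, x]$ and $[x, a]$ lie in $L$ because $L$ is a subalgebra, and they lie in $I$ because $I$ is an ideal of $Q$ and $L \subseteq Q$; hence both belong to $I \cap L$. The substantive point is that $I \cap L$ is nonzero, and this is exactly where the weak-quotient hypothesis enters. Choosing any $0 \neq q \in I$, the definition of a weak algebra of quotients furnishes an $x \in L$ with $0 \neq [q, x] \in L$, or a $y \in L$ with $0 \neq [y, q] \in L$. In either case the resulting bracket lies in $I$ (since $q \in I$ and $I$ is an ideal of $Q$) and simultaneously in $L$, so it is a nonzero element of $I \cap L$. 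Thus $I \cap L \neq \{0\}$, completing (1).

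For part (2), suppose first that $L$ is semiprime and, aiming for a contradiction, that some nonzero ideal $I$ of $Q$ satisfies $[I, I] = \{0\}$. By (1), $I \cap L$ is a nonzero ideal of $L$, and $[I \cap L, I \cap L] \subseteq [I, I] = \{0\}$, which contradicts the semiprimeness of $L$. The prime case is dispatched the same way: if $I, J$ are nonzero ideals of $Q$ with $[I, J] = \{0\}$, then $I \cap L$ and $J \cap L$ are nonzero ideals of $L$ by (1), and $[I \cap L, J \cap L] \subseteq [I, J] = \{0\}$ contradicts the primeness of $L$. Hence $Q$ inherits both properties.

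I do not anticipate a genuine obstacle here. The entire content is concentrated in the non-triviality of $I \cap L$ in part (1), which is precisely the role played by the weak-quotient condition, and part (2) is then a short reduction that pulls a putative annihilating pair of ideals in $Q$ back to an annihilating pair in $L$.
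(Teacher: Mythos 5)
Your proposal is correct and follows essentially the same argument as the paper: part (1) uses the weak-quotient condition on a nonzero element of $I$ to produce a nonzero bracket in $I \cap L$, and part (2) pulls a putative annihilating ideal (or pair of ideals) of $Q$ back to $L$ via part (1) for a contradiction. The only difference is that you spell out the routine ideal check and the prime case, which the paper leaves as "straightforward" and "similarly."
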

\begin{proof}
(1) Take $0 \neq x \in I$. There exists $y \in L$ such that $0 \neq [x, y] \in L$ or $z \in L$ such that $0 \neq [z, x] \in L$. Note that $I$ is an ideal of $Q$, both $[x, y]$ and $[z, x]$ belong to $I$. Therefore, $0 \neq [x, y] \in I \cap L$ or $0 \neq [z, x] \in I \cap L$. It's straightforward to show that $I \cap L$ is an ideal of $L$.

(2) Suppose that $Q$ isn't semiprime. Then there exists a nonzero ideal $I$ of $Q$ such that $[I, I] = \{0\}$. By (1), $I \cap L$ is a nonzero ideal of $L$ such that $[I \cap L, I \cap L] \subseteq [I, I] = \{0\}$, which contradicts with $L$ being simiprime.

Similarly, we can show that $Q$ is prime if $L$ is prime.
\end{proof}
\begin{defn}
Let $L$ be a subalgebra of $Q$. Then $Q$ is called ideally absorbed into $L$, if for each $0 \neq q \in Q$, there exists an ideal $I$ of $L$ with $\rm{Ann}\it{_{L}(I)} = \{\rm{0}\}$ such that $[I, q] \neq \{0\}$ or $[q, I] \neq \{0\}$ and both $[I, q]$ and $[q, I]$ are contained in $L$.
\end{defn}
\begin{prop}\label{prop:3.8}
Suppose that $L$ is a subalgebra of $Q$. Take $q \in Q$.
\begin{enumerate}[(1)]
\item If $Q$ is an algebra of quotients of $L$, then $(L : q)$ is an essential ideal of $L$. Moreover, $\rm{Ann}\it{_{L}((L : q))} = \{\rm{0}\}$.
\item If $Q$ is ideally absorbed into $L$, then $(L : q)$ is an essential ideal of $L$. Moreover, $\rm{Ann}\it{_{L}((L : q))} = \{\rm{0}\}$.
\end{enumerate}
\end{prop}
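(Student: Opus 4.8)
The plan is to prove in both parts the equality $\mathrm{Ann}_L((L:q)) = \{0\}$ and then read off essentiality from it: we already know from Proposition \ref{prop:3.1} that $(L:q)$ is an ideal of $L$, and once its annihilator is trivial, Proposition \ref{prop:2.8}(1) immediately gives that $(L:q)$ is essential. Thus the essentiality assertion is not the real content; the work is to show that no nonzero element of $L$ can annihilate $(L:q)$ on both sides.

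For (1) I would argue by contradiction. Suppose $0 \neq a \in \mathrm{Ann}_L((L:q))$, so that $[a, (L:q)] = \{0\}$ and $[(L:q), a] = \{0\}$. Since $a$ is a nonzero element of $Q$, applying the defining property of an algebra of quotients to the pair $(a, q)$ produces either some $x \in (L:q)$ with $[x, a] \neq 0$ or some $y \in (L:q)$ with $[a, y] \neq 0$; the former contradicts $[(L:q), a] = \{0\}$ and the latter contradicts $[a, (L:q)] = \{0\}$. Hence $\mathrm{Ann}_L((L:q)) = \{0\}$. This argument is uniform in $q$ and does not require $q \neq 0$.

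For (2) the idea is to place a sturdy ideal inside $(L:q)$ and invoke the order-reversing behaviour of the annihilator. When $q \neq 0$, the hypothesis that $Q$ is ideally absorbed into $L$ furnishes an ideal $I$ of $L$ with $\mathrm{Ann}_L(I) = \{0\}$ and $[I, q] \subseteq L$, $[q, I] \subseteq L$. The maximality clause of Proposition \ref{prop:3.1} then forces $I \subseteq (L:q)$, and since a larger set has a smaller annihilator we obtain $\mathrm{Ann}_L((L:q)) \subseteq \mathrm{Ann}_L(I) = \{0\}$. The degenerate case $q = 0$ gives $(L:q) = L$, so the claim reduces to $\mathrm{Ann}(L) = \{0\}$, which follows by applying the ideally absorbed property to an arbitrary nonzero element of $L$ and inspecting on which side the resulting nonzero bracket lies. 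I expect the only genuinely load-bearing points to be the reduction of essentiality to Proposition \ref{prop:2.8}(1) and, in (2), the correct appeal to the maximality in Proposition \ref{prop:3.1} combined with the antimonotonicity of $\mathrm{Ann}_L(\cdot)$; the $q = 0$ boundary is the sole spot demanding extra care.
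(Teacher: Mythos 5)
Your proposal is correct, and for part (2) it coincides with the paper's proof: the paper likewise extracts a sturdy ideal $I$ from the ideally absorbed hypothesis, invokes the maximality clause of Proposition \ref{prop:3.1} to get $I \subseteq (L:q)$, concludes $\mathrm{Ann}_{L}((L:q)) \subseteq \mathrm{Ann}_{L}(I) = \{0\}$, and then cites Proposition \ref{prop:2.8}(1) for essentiality. For part (1), however, your logical organization is reversed relative to the paper's, and it is the more economical of the two. The paper first proves essentiality directly from the definition: given a nonzero ideal $I$ and $0 \neq x \in I$, it applies the quotient property to the pair $(x,q)$ to produce an element of $(L:q)$ whose bracket with $x$ is nonzero and lies in $(L:q) \cap I$ (using that both are ideals); only then does it prove $\mathrm{Ann}_{L}((L:q)) = \{0\}$, by using essentiality to pick $0 \neq u \in (L:q) \cap \mathrm{Ann}_{L}((L:q))$ and applying the quotient property a second time to $(u,q)$. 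You instead kill the annihilator in one stroke --- apply the quotient property to $(a,q)$ for any nonzero $a \in \mathrm{Ann}_{L}((L:q))$ and get an immediate contradiction --- and then deduce essentiality from Proposition \ref{prop:2.8}(1), exactly as in part (2). This shows that the paper's intersection step is actually redundant: one never needs $u$ to lie inside $(L:q)$, only inside its annihilator, so a single application of the definition suffices and essentiality is a corollary rather than an independent claim. A further small merit of your write-up is that you address the boundary case $q = 0$ in part (2) (where the ideally absorbed hypothesis, being stated only for nonzero elements, does not directly furnish an ideal, and one must note $(L:0) = L$ and check $\mathrm{Ann}(L) = \{0\}$ separately); the paper's proof passes over this case silently.
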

\begin{proof}
(1) Let $I$ be a nonzero ideal of $L$. Take $0 \neq x \in I$. Apply that $Q$ is an algebra of quotients of $L$ to find $y \in L$ such that $[x, y] \neq 0, y \in (L : q)$ or $z \in L$ such that $[z, x] \neq 0, z \in (L : q)$. Note that $(L : q)$ is an ideal of $L$, we have both $[x, y]$ and $[z, x]$ belong to $(L : q) \cap I$, which implies that $(L : q) \cap I \neq \{0\}$. Therefore, $(L : q)$ is an essential ideal of $L$.

Suppose that $\rm{Ann}\it{_{L}((L : q))} \neq \{\rm{0}\}$. Then $(L : q) \cap \rm{Ann}\it{_{L}((L : q))} \neq \{\rm{0}\}$ since $(L : q)$ is essential. Take $0 \neq u \in (L : q) \cap \rm{Ann}\it{_{L}((L : q))}$. Apply that $Q$ is an algebra of quotients of $L$ to find $w \in L$ such that $[u, w] \neq 0, w \in (L : q)$ or $v \in L$ such that $[v, u] \neq 0, v \in (L : q)$. However, both $[u, w]$ and $[v, u]$ are equal to $0$ since $u \in \rm{Ann}\it{_{L}((L : q))}$. Contradiction.

(2) Since $Q$ is ideally absorbed into $L$, there exists a nonzero ideal $I$ of $L$ with $\rm{Ann}\it{_{L}(I)} = \{\rm{0}\}$ such that $[I, q] \subseteq L$ and $[q, I] \subseteq L$. According to the proof of Proposition \ref{prop:3.1}, $I \subseteq (L : q)$. Hence, $\rm{Ann}\it{_{L}((L : q))} \subseteq \rm{Ann}\it{_{L}(I)} = \{\rm{0}\}$. By Proposition \ref{prop:2.8} (1), we get $(L : q)$ is an essential ideal of $L$.
\end{proof}
\begin{lem}\label{le:3.9}
Let $Q$ be a weak algebra of quotients of $L$. Let $I$ be an ideal of $L$ with $\rm{Ann}\it{_{L}(I)} = \{\rm{0}\}$. Then there is no nonzero element $x$ in $Q$ such that $[x, I] = [I, x] = \{0\}$.
\end{lem}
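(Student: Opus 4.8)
The plan is to argue by contradiction. Suppose there is a nonzero $x \in Q$ with $[x, I] = [I, x] = \{0\}$. Since $Q$ is a weak algebra of quotients of $L$, the definition applied to $x$ yields either some $y \in L$ with $0 \neq [x, y] \in L$, or some $z \in L$ with $0 \neq [z, x] \in L$. In either case we obtain a nonzero element $a \in L$ (namely $a = [x, y]$ or $a = [z, x]$). The goal is then to show that this $a$ lies in $\mathrm{Ann}_{L}(I)$, which is impossible because $\mathrm{Ann}_{L}(I) = \{0\}$.

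The key step is to transfer the two-sided annihilation of $I$ from $x$ to $a$, and this is exactly where the right Leibniz identity $[u, [v, w]] = [[u, v], w] - [[u, w], v]$ does all the work. Consider first $a = [x, y]$ and take an arbitrary $b \in I$. Applying the identity with $(u, v, w) = (x, y, b)$ gives $[[x, y], b] = [x, [y, b]] + [[x, b], y]$; since $[x, b] = 0$ (as $b \in I$) and $[y, b] \in I$ (as $I$ is an ideal), both terms vanish using $[x, I] = \{0\}$, hence $[a, b] = 0$. Applying the identity with $(u, v, w) = (b, x, y)$ gives $[b, [x, y]] = [[b, x], y] - [[b, y], x]$; here $[b, x] = 0$ and $[b, y] \in I$, so both terms vanish using $[I, x] = \{0\}$, hence $[b, a] = 0$. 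Thus $[a, I] = [I, a] = \{0\}$, that is, $a \in \mathrm{Ann}_{L}(I)$.

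The case $a = [z, x]$ is entirely analogous: expanding $[[z, x], b]$ and $[b, [z, x]]$ by the same identity, and again using that $I$ is an ideal together with $[x, I] = [I, x] = \{0\}$, forces $[a, I] = [I, a] = \{0\}$. In both cases we arrive at $0 \neq a \in \mathrm{Ann}_{L}(I) = \{0\}$, the desired contradiction. I expect no genuine obstacle here; the only subtlety is the bookkeeping, namely choosing the bracketing order in each application of the Leibniz identity so that every resulting summand contains a factor of the form $[x, b]$ or $[b, x]$ (killed by the hypothesis on $x$), or pairs an element of $I$ with $x$ (killed by the ideal property of $I$ together with the hypothesis on $x$). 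Once this pattern is fixed, the computation closes immediately.
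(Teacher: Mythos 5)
Your proof is correct and takes essentially the same route as the paper: both argue by contradiction, use the weak-quotient property to produce a nonzero element $a=[x,y]$ or $a=[z,x]$ in $L$, and then expand $[I,a]$ and $[a,I]$ via the right Leibniz identity so that every summand is killed by $[x,I]=[I,x]=\{0\}$ together with the ideal property of $I$, forcing $0\neq a\in\mathrm{Ann}_{L}(I)=\{0\}$. The two expansions you write with explicit triples $(u,v,w)$ are exactly the identities the paper uses.
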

\begin{proof}
Suppose that there exists $0 \neq x \in Q$ such that $[x, I] = [I, x] = \{0\}$. Apply that $Q$ is a weak algebra of quotients of $L$ to find $y \in L$ such that $0 \neq [x, y] \in L$ or $z \in L$ such that $0 \neq [z, x] \in L$.

If $0 \neq [x, y] \in L$, we have $[I, [x, y]] \neq \{0\}$ or $[[x, y], I] \neq \{0\}$ since $\rm{Ann}\it{_{L}(I)} = \{\rm{0}\}$. Either of the two situations leads to a contradiction. Indeed,
\[[I, [x, y]] = [[I, x], y] - [[I, y], x] = \{0\},\]
\[[[x, y], I] = [[x, I], y] + [x, [y, I]] = \{0\}.\]

The situation is similarly if $0 \neq [z, x] \in L$. Therefore the proof is completed.
\end{proof}
\begin{thm}\label{thm:3.10}
Suppose that $L$ is a subalgebra of $Q$. Then $Q$ is an algebra of quotients of $L$ if and only if $Q$ is ideally absorbed into $L$.
\end{thm}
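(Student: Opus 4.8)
The plan is to establish the two implications separately, in both cases using the ideal $(L:q)$ itself as the canonical witness, so that Propositions \ref{prop:3.1} and \ref{prop:3.8} together with Lemma \ref{le:3.9} carry all the weight.

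For the forward implication, I would assume $Q$ is an algebra of quotients of $L$, fix $0 \neq q \in Q$, and simply take $I = (L:q)$. By Proposition \ref{prop:3.1} this is an ideal of $L$, and since $q \in {}_{L}(q)$, the very definition of $(L:q)$ gives $[(L:q), q] \subseteq L$ and $[q, (L:q)] \subseteq L$. Proposition \ref{prop:3.8}(1) supplies $\mathrm{Ann}_{L}((L:q)) = \{0\}$. The only remaining point is that $(L:q)$ does not annihilate $q$ on both sides; this follows from the definition of algebra of quotients applied to the pair $p := q$ and $q := q$ (legitimate since $q \neq 0$), which produces $x \in (L:q)$ with $[x,q] \neq 0$ or $y \in (L:q)$ with $[q,y] \neq 0$. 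Hence $[(L:q), q] \neq \{0\}$ or $[q, (L:q)] \neq \{0\}$, and $(L:q)$ witnesses that $Q$ is ideally absorbed into $L$.

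For the converse, I would first record that ideally absorbed forces $Q$ to be a weak algebra of quotients of $L$: given $0 \neq q$, the absorbing ideal $I$ with $[I,q] \subseteq L$, $[q,I] \subseteq L$ and ($[I,q] \neq \{0\}$ or $[q,I] \neq \{0\}$) immediately yields $y \in I \subseteq L$ with $0 \neq [y,q] \in L$ (or $x \in I \subseteq L$ with $0 \neq [q,x] \in L$). Now fix $p, q \in Q$ with $p \neq 0$. Proposition \ref{prop:3.8}(2) gives $\mathrm{Ann}_{L}((L:q)) = \{0\}$, so Lemma \ref{le:3.9} applies to the ideal $(L:q)$ and forbids any nonzero element of $Q$ from annihilating $(L:q)$ on both sides. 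In particular $p$ cannot, so $[(L:q), p] \neq \{0\}$ or $[p, (L:q)] \neq \{0\}$; as $(L:q) \subseteq L$, this furnishes exactly the required $x \in (L:q)$ with $[x,p] \neq 0$ or $y \in (L:q)$ with $[p,y] \neq 0$. Thus $Q$ is an algebra of quotients of $L$.

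The main obstacle is entirely in the converse, and it is conceptual rather than computational: the defining condition for an algebra of quotients demands a single element that simultaneously fails to annihilate $p$ \emph{and} lies in $(L:q)$, whereas the ideally-absorbed hypothesis, fed directly with $p$, only delivers an element killing $p$ inside an ideal tied to $p$, with no control relative to $q$. The resolution is to decouple the two roles of the hypothesis: use ideally absorbed merely to upgrade $Q$ to a weak algebra of quotients, and then let Lemma \ref{le:3.9}, applied to $(L:q)$ (whose annihilator is trivial by Proposition \ref{prop:3.8}(2)), be the mechanism that prevents $p$ from annihilating $(L:q)$ on both sides. Once this decoupling is seen, every remaining step is a direct citation of the preceding results.
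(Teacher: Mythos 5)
Your proposal is correct and follows essentially the same route as the paper: $(L:q)$ serves as the absorbing ideal, Proposition \ref{prop:3.8} supplies $\mathrm{Ann}_{L}((L:q))=\{0\}$, and Lemma \ref{le:3.9} (after upgrading ideally absorbed to weak algebra of quotients) drives the converse exactly as in the paper's proof. The only cosmetic difference is in the forward direction, where you get $[(L:q),q]\neq\{0\}$ or $[q,(L:q)]\neq\{0\}$ by applying the definition of algebra of quotients with $p:=q$, whereas the paper invokes Lemma \ref{le:3.9} a second time; both are valid.
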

\begin{proof}
Suppose that $Q$ is an algebra of quotients of $L$. For any $0 \neq q \in Q$, $(L : q)$ is an ideal of $L$ with $\rm{Ann}\it{_{L}((L : q))} = \{\rm{0}\}$ according to Proposition \ref{prop:3.8} (1). By Lemma \ref{le:3.9}, $[q, (L: q)] \neq \{0\}$ or $[(L: q), q] \neq \{0\}$. Also, both $[q, (L : q)]$ and $[(L : q), q]$ contain in $(L : q)$ according to the definition of $(L : q)$. Hence, $Q$ is ideally absorbed into $L$.

Conversely, suppose that $Q$ is ideally absorbed into $L$. Then $Q$ is a weak algebra of quotients of $L$. For any $p, q \in Q$ with $p \neq 0$, we have $\rm{Ann}\it{_{L}((L : q))} = \{\rm{0}\}$. Hence, $[p, (L : q)] \neq \{0\}$ or $[(L : q), p] \neq \{0\}$ according to Lemma \ref{le:3.9}. Therefore, there exists $x \in (L: q)$ such that $[p, x] \neq 0$ or $y \in (L: q)$ such that $[y, p] \neq 0$. Hence, $Q$ is an algebra of quotients of $L$.
\end{proof}
In the proof of Theorem \ref{thm:3.10}, we can conclude that $Q$ is an algebra of quotients of $L$ if and only if $Q$ is a weak algebra of quotients of $L$ satisfying $\rm{Ann}\it{_{L}((L : q))} = \{\rm{0}\}$ for every $q \in Q$.
\begin{prop}\label{prop:3.11}
Suppose that $Q$ is a weak algebra of quotients of $L$. Then for every ideal $I$ of $L$, $\rm{Ann}\it{_{L}(I)} = \{\rm{0}\} \Rightarrow \rm{Ann}\it{_{Q}(I)} = \{\rm{0}\}$ and $ran_{L}(I) = \{0\} \Rightarrow ran_{Q}(I) = \{0\}$.
\end{prop}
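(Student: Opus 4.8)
The plan is to treat the two implications separately, since the first is essentially already available while the second needs a short argument in its own right. For the first implication, $\mathrm{Ann}_{L}(I) = \{0\} \Rightarrow \mathrm{Ann}_{Q}(I) = \{0\}$, I would simply unwind the definition and invoke Lemma \ref{le:3.9}. Recall that $\mathrm{Ann}_{Q}(I) = lan_{Q}(I) \cap ran_{Q}(I)$, so an element $x \in \mathrm{Ann}_{Q}(I)$ is precisely an $x \in Q$ satisfying $[x, I] = \{0\}$ and $[I, x] = \{0\}$ at the same time. Lemma \ref{le:3.9} says exactly that, when $\mathrm{Ann}_{L}(I) = \{0\}$, no nonzero element of $Q$ has this two-sided annihilation property; hence $\mathrm{Ann}_{Q}(I) = \{0\}$.

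For the second implication, $ran_{L}(I) = \{0\} \Rightarrow ran_{Q}(I) = \{0\}$, I would argue by contradiction in the style of Lemma \ref{le:3.9}. Suppose there is $0 \neq x \in ran_{Q}(I)$, that is, $[I, x] = \{0\}$. Applying the weak algebra of quotients hypothesis to $x$, I obtain either some $y \in L$ with $0 \neq [x, y] \in L$, or some $z \in L$ with $0 \neq [z, x] \in L$. In either case the nonzero bracket lies in $L$, so because $ran_{L}(I) = \{0\}$ it cannot be annihilated on the right by all of $I$; this gives $[I, [x, y]] \neq \{0\}$ in the first case and $[I, [z, x]] \neq \{0\}$ in the second.

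The crux is then to show that these same brackets in fact vanish. For any $w \in I$, expanding with the right Leibniz identity $[w, [a, b]] = [[w, a], b] - [[w, b], a]$ yields, in the first case, $[w, [x, y]] = [[w, x], y] - [[w, y], x]$; here $[w, x] = 0$ since $x \in ran_{Q}(I)$, while $[w, y] \in I$ because $I$ is an ideal, so $[[w, y], x] \in [I, x] = \{0\}$, and thus $[w, [x, y]] = 0$. The second case is symmetric via $[w, [z, x]] = [[w, z], x] - [[w, x], z]$, using $[w, z] \in I$ and $[w, x] = 0$. This forces $[I, [x, y]] = \{0\}$ (respectively $[I, [z, x]] = \{0\}$), contradicting the previous paragraph and completing the argument.

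I do not anticipate a genuine obstacle. The only points demanding care are recognizing that the first implication is verbatim Lemma \ref{le:3.9} once $\mathrm{Ann}_{Q}(I)$ is rephrased as the two-sided condition $[x, I] = [I, x] = \{0\}$, and, for the second implication, tracking ideal membership $[I, L] \subseteq I$ so that each summand of the Leibniz expansion is visibly seen to lie in $[I, x] = \{0\}$. One must also handle both alternatives furnished by the weak quotient hypothesis symmetrically, rather than assuming only the left or only the right multiplication produces a nonzero element of $L$.
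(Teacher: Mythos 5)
Your proof is correct and takes essentially the same approach as the paper: a contradiction argument that feeds the weak-quotient hypothesis into Leibniz-identity expansions which force the relevant brackets to vanish. The only cosmetic differences are that you invoke Lemma \ref{le:3.9} directly for the annihilator implication (the paper re-runs that same computation inline instead of citing it), and you write out the right-annihilator case that the paper dispatches with ``similarly''.
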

\begin{proof}
Suppose that $0 \neq q \in \rm{Ann}\it{_{Q}(I)}$. By hypothesis, there exists $x \in L$ such that $0 \neq [q, x] \in L$ or $y \in L$ such that $0 \neq [y, q] \in L$.

If $0 \neq [q, x] \in L$, we have $[I, [q, x]] \neq \{0\}$ or $[[q, x], I] \neq \{0\}$ since $\rm{Ann}\it{_{L}(I)} = \{\rm{0}\}$. Either of the two situations leads to a contradiction. Indeed,
\[[I, [q, x]] = [[I, q], x] - [[I, x], q] = \{0\},\]
\[[[q, x], I] = [[q, I], x] + [q, [x, I]] = \{0\}.\]

The situation is similarly if $0 \neq [y, q] \in L$, consequently $\rm{Ann}\it{_{Q}(I)} = \{\rm{0}\}$.

Similarly, we have $ran_{Q}(I) = \{0\}$ if $ran_{L}(I) = \{0\}$.
\end{proof}
\begin{no}
Denote by $\mathscr{J}_{e}(L)$ the set of all essential ideals of a Leibniz algebra $L$.
\end{no}
Clearly, given $I, J \in \mathscr{J}_{e}(L)$, we have $I \cap J \in \mathscr{J}_{e}(L)$. If $L$ is semiprime, then $I^{2} \in \mathscr{J}_{e}(L)$.
\begin{prop}\label{prop:3.13}
Let $L$ be a semiprime Leibniz algebra and $Q$ an algebra of quotients of $L$. Then for every essential ideal $I$ of $L$, we have $Q$ is also an algebra of quotients of $I$.
\end{prop}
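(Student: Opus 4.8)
The plan is to invoke Theorem \ref{thm:3.10}. Since $I$ is an ideal of $L$ and $L \subseteq Q$, the set $I$ is a subalgebra of $Q$, so it suffices to show that $Q$ is ideally absorbed into $I$. I fix $0 \neq q \in Q$. By Proposition \ref{prop:3.8}(1), $K := (L : q)$ is an essential ideal of $L$, and since $I$ is also essential, the intersection $M := I \cap K$ is again an essential ideal of $L$ by the intersection remark recorded just before this proposition. As $L$ is semiprime, that same remark gives $M^{2} = [M, M] \in \mathscr{J}_{e}(L)$; I set $J := [M, M]$, which is contained in $I$ because $M \subseteq I$ and $I$ is an ideal.

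The first task will be to check that $J$ absorbs $q$ into $I$. For $a, x \in M$ the right Leibniz identity yields
\[
[[a, x], q] = [a, [x, q]] + [[a, q], x], \qquad [q, [a, x]] = [[q, a], x] - [[q, x], a].
\]
Here $[x, q], [a, q], [q, a], [q, x]$ all lie in $L$ because $a, x \in K = (L : q)$, and multiplying each of them on the appropriate side by the element $a$ or $x$ of $I$ keeps the result inside $I$ since $I$ is an ideal; hence $[J, q] \subseteq I$ and $[q, J] \subseteq I$. Moreover, because $M$ is an ideal of $L$ we have $[c, a] \in M$ for every $c \in L$ and $a \in M$, and expanding $[c, [a, x]]$ and $[[a, x], c]$ by the Leibniz identity then shows $J = [M, M]$ is an ideal of $L$, and in particular an ideal of $I$.

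It then remains to verify the two nondegeneracy conditions. Because $J \in \mathscr{J}_{e}(L)$ and $L$ is semiprime, Proposition \ref{prop:2.8}(2) gives $\mathrm{Ann}_{L}(J) = \{0\}$, whence $\mathrm{Ann}_{I}(J) = I \cap \mathrm{Ann}_{L}(J) = \{0\}$. Furthermore, $Q$ is a weak algebra of quotients of $L$, so Lemma \ref{le:3.9} applied to the ideal $J$ (which satisfies $\mathrm{Ann}_{L}(J) = \{0\}$) shows that the nonzero element $q$ cannot satisfy $[q, J] = [J, q] = \{0\}$; thus $[J, q] \neq \{0\}$ or $[q, J] \neq \{0\}$. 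This exhibits $J$ as the ideal required by the definition, so $Q$ is ideally absorbed into $I$ and Theorem \ref{thm:3.10} finishes the proof. The one genuine obstacle is that $M = I \cap (L : q)$ need not absorb $q$ into $I$, only into $L$; passing to the derived ideal $[M, M]$ is precisely what pushes the products back into $I$, and the semiprimeness hypothesis is exactly what guarantees that this smaller ideal remains essential, hence still sturdy enough to detect $q$ through Lemma \ref{le:3.9}.
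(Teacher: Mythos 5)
Your proof is correct. At its core it rests on the same construction as the paper's proof: both form the ideal $J = (I \cap (L:q))^{2}$, use semiprimeness (via the remark that $\mathscr{J}_{e}(L)$ is closed under intersections and squares) to keep it essential, and use the lifting of zero annihilators from $L$ to $Q$ to see that $J$ detects nonzero elements of $Q$. The difference is in the packaging. The paper verifies the definition of algebra of quotients directly: for a pair $p, q \in Q$ with $p \neq 0$ it produces elements $[y,z]$ (or $[u,v]$) of $J$ that do not annihilate $p$, and must then check $[y,z] \in (I:q)$ --- that is, that brackets with the whole of ${}_{I}(q)$ land in $I$ --- which it does by comparing ${}_{I}(q)$ with ${}_{L}(q)$. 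You instead verify that $Q$ is ideally absorbed into $I$ and invoke Theorem \ref{thm:3.10}; this only requires absorption of $q$ itself ($[J,q] \subseteq I$ and $[q,J] \subseteq I$), so your bracket computation is lighter, at the small cost of having to check in addition that $J$ is an ideal of $I$ with $\mathrm{Ann}_{I}(J) = \{0\}$. You also use Lemma \ref{le:3.9} where the paper uses Proposition \ref{prop:3.11}; these are near-identical statements, so this is a cosmetic difference. Both routes are sound: yours leans more on machinery already established (the equivalence in Theorem \ref{thm:3.10}), while the paper's argument is self-contained at the level of the definition.
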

\begin{proof}
For any $p, q \in Q$ with $p \neq 0$, we have $(L : q) \in \mathscr{J}_{e}(L)$. So that $ I \cap (L : q) \in \mathscr{J}_{e}(L)$ and consequently $(I \cap (L : q))^{2} \in \mathscr{J}_{e}(L)$. Thus $\rm{Ann}\it{_{Q}((I \cap (L : q))^{2})} = \{\rm{0}\}$ by Proposition \ref{prop:3.11}. So there exist $y, z \in I \cap (L : q)$ such that $[[y, z], p] \neq 0$ or $u, v \in I \cap (L : q)$ such that $[p, [u, v]] \neq 0$. Moreover,
\[[[y, z], _{I}(q)] = [[y, _{I}(q)], z] + [y, [z, _{I}(q)]] \subseteq [[y, _{L}(q)], z] + [y, [z, _{L}(q)]] \subseteq [L, I] + [I, L] \subseteq I,\]
\[[_{I}(q), [y, z]] = [[_{I}(q), y], z] - [[_{I}(q), z], y] \subseteq [[_{L}(q), y], z] - [[_{L}(q), z], y] \subseteq [L, I] \subseteq I,\]
which implies that $[y, z] \in (I : q)$. Similarly $[u, v] \in (I : q)$.

Therefore, $Q$ is an algebra of quotients of $I$.
\end{proof}
Next, let's focus on Martindale-like quotients.
\begin{defn}
A filter $\mathscr{F}$ on a Leibniz algebra is a nonempty family of nonzero ideals such that for any $I_{1}, I_{2} \in \mathscr{F}$ there exists $I \in \mathscr{F}$ such that $I \subseteq I_{1} \cap I_{2}$. Moreover, $\mathscr{F}$ is a power filter if for any $I \in \mathscr{F}$ there exists $K \in \mathscr{F}$ such that $K \subseteq [I, I]$.
\end{defn}
\begin{defn}
A Leibniz algebra of Martindale-like quotients $Q$ of a Leibniz algebra $L$ with respect to a power filter of sturdy ideals $\mathscr{F}$ if $L \subseteq Q$ such that for every nonzero element $q \in Q$ there exists an ideal $I_{q} \in \mathscr{F}$ such that $[q, I_{q}] \neq \{0\}$ or $[I_{q}, q] \neq \{0\}$ and both $[q, I_{q}]$ and $[I_{q}, q]$ contain in $L$.
\end{defn}
\begin{prop}\label{prop:3.16}
Let $L$ be a Leibniz algebra. Then a Martindale-like quotients of $L$ is also an algebra of quotients of $L$. Moreover, if $L$ is semiprime, then any algebra of quotients of $L$ is Martindale-like quotients of $L$.
\end{prop}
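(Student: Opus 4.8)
The plan is to treat the two implications separately, leaning on the machinery already in place, above all Theorem \ref{thm:3.10}, Proposition \ref{prop:3.8} and Lemma \ref{le:3.9}.

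For the first assertion, suppose $Q$ is a Martindale-like quotients of $L$ with respect to a power filter $\mathscr{F}$ of sturdy ideals. I would simply observe that the defining property of a Martindale-like quotients is already a special case of ideal absorption: given $0 \neq q \in Q$, the ideal $I_{q} \in \mathscr{F}$ supplied by the definition is sturdy, i.e. $\mathrm{Ann}_{L}(I_{q}) = \{0\}$, and it satisfies $[q, I_{q}] \neq \{0\}$ or $[I_{q}, q] \neq \{0\}$ with both $[q, I_{q}]$ and $[I_{q}, q]$ contained in $L$. This is exactly the condition for $Q$ to be ideally absorbed into $L$, so Theorem \ref{thm:3.10} immediately gives that $Q$ is an algebra of quotients of $L$. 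No computation is needed here.

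For the converse, assume $L$ is semiprime and $Q$ is an algebra of quotients of $L$. The key is to exhibit a suitable power filter of sturdy ideals, and I propose taking $\mathscr{F} = \mathscr{J}_{e}(L)$, the family of all essential ideals of $L$. Three things must then be checked. First, $\mathscr{F}$ is a filter, since $I_{1} \cap I_{2} \in \mathscr{J}_{e}(L)$ whenever $I_{1}, I_{2} \in \mathscr{J}_{e}(L)$. Second, $\mathscr{F}$ is a power filter: for $I \in \mathscr{J}_{e}(L)$, semiprimeness guarantees $[I, I] \in \mathscr{J}_{e}(L)$, so $K = [I, I]$ works. Third, every member of $\mathscr{F}$ is sturdy, since by Proposition \ref{prop:2.8}~(2), under semiprimeness an ideal is essential if and only if its annihilator vanishes, so the essential ideals are exactly the sturdy ones. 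It then remains to verify the absorption property. Given $0 \neq q \in Q$, I would take $I_{q} = (L : q)$: by Proposition \ref{prop:3.8}~(1) this is an essential ideal, hence lies in $\mathscr{F}$, and by the very definition of $(L : q)$ both $[q, (L : q)]$ and $[(L : q), q]$ are contained in $L$. Finally, since $\mathrm{Ann}_{L}((L : q)) = \{0\}$ and $q \neq 0$, Lemma \ref{le:3.9} forbids $[q, (L : q)]$ and $[(L : q), q]$ from vanishing simultaneously, so at least one of them is nonzero. This is precisely the Martindale-like condition with respect to $\mathscr{F}$.

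The routine parts are genuinely routine; the only real content, and the reason the converse needs the semiprime hypothesis, is the verification that $\mathscr{J}_{e}(L)$ is a power filter of sturdy ideals. Both the power-filter property (that $[I, I]$ remains essential) and the identification of essential ideals with sturdy ideals can fail in general and hold here precisely because $L$ is semiprime; this is the step I would watch most carefully.
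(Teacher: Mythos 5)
Your proposal is correct and follows essentially the same route as the paper: the first direction is the observation that the Martindale-like condition is literally ideal absorption plus Theorem \ref{thm:3.10}, and the converse exhibits a power filter of sturdy ideals using semiprimeness and then absorbs via $(L:q)$. The only cosmetic difference is that you describe the filter as $\mathscr{J}_{e}(L)$ while the paper takes all sturdy ideals, but by Proposition \ref{prop:2.8}~(2) these families coincide under semiprimeness, and your explicit use of Proposition \ref{prop:3.8}~(1) and Lemma \ref{le:3.9} just spells out what the paper leaves implicit in the phrase ``by hypothesis.''
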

\begin{proof}
Suppose that $Q$ is a Martindale-like quotients of $L$. Then $Q$ is ideally absorbed into $L$ by definition and so an algebra of quotients of $L$.

Conversely, suppose that $Q$ is an algebra of quotients of $L$ with $L$ semiprime. Then all sturdy ideals of $L$ form a power filter of $L$. Indeed, for any sturdy ideal $I$ of $L$, we have $[I, I]$ is also a sturdy ideal since $L$ is semiprime. So it just need to take $K = [I, I]$. Hence by hypothesis, $Q$ is also a Martindale-like quotients of $L$.
\end{proof}
\section{The maximal algebra of quotients of a semiprime Leibniz algebra}\label{se:4}
In this section, we will construct a maximal algebra of quotients for every semiprime Leibniz algebra $L$. Maximal in the sense that every algebra of quotients of $L$ can be considered inside this maximal algebra of quotients via a monomorphism which restricted in $L$ is the identity.
\begin{defn}
Given an ideal $I$ of a Leibniz algebra $L$, we say that a linear map $\delta : I \rightarrow L$ is a partial derivation if for any $x, y \in I$ it satisfies:
\[\delta([x, y]) = [\delta(x), y] + [x, \delta(y)].\]
\end{defn}
Denote by $\rm{PDer}\it{(I, L)}$ the set of all partial derivations of $I$ in $L$.
\begin{lem}\label{le:4.2}
Let $L$ be a semiprime Leibniz algebra and consider the set
\[\mathscr{D} := \{(\delta, I)\;|\;I \in \mathscr{J}_{e}(L), \delta \in \rm{PDer}\it{(I, L)}\}.\]
Define on $\mathscr{D}$ the following relation:

$(\delta, I) \equiv (\mu, J)$ if and only if there exists $K \in \mathscr{J}_{e}(L)$, $K \subseteq I \cap J$ such that
\[\delta|_{K} = \mu|_{K}.\]
Then $\equiv$ is an equivalence relation.
\end{lem}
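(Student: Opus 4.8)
The plan is to verify directly the three defining properties of an equivalence relation—reflexivity, symmetry, and transitivity—reading each off the definition of $\equiv$. The single external fact I will lean on is the closure of $\mathscr{J}_{e}(L)$ under finite intersections, recorded in the remark immediately following the notation that introduces $\mathscr{J}_{e}(L)$: if $I, J \in \mathscr{J}_{e}(L)$ then $I \cap J \in \mathscr{J}_{e}(L)$. Note that the partial-derivation structure of $\delta, \mu, \nu$ plays no role in this lemma; only the equality of their restrictions on a common essential ideal matters.

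For reflexivity, given $(\delta, I) \in \mathscr{D}$ I would take $K = I$: since $I \in \mathscr{J}_{e}(L)$, $I \subseteq I \cap I$, and $\delta|_{I} = \delta|_{I}$, we get $(\delta, I) \equiv (\delta, I)$. Symmetry is equally immediate: if $(\delta, I) \equiv (\mu, J)$ is witnessed by $K \in \mathscr{J}_{e}(L)$ with $K \subseteq I \cap J$ and $\delta|_{K} = \mu|_{K}$, then the same $K$ witnesses $(\mu, J) \equiv (\delta, I)$, because $I \cap J = J \cap I$ and the equality $\delta|_{K} = \mu|_{K}$ is itself symmetric.

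The substantive step is transitivity. Suppose $(\delta, I) \equiv (\mu, J)$ and $(\mu, J) \equiv (\nu, H)$, witnessed respectively by $K_{1} \in \mathscr{J}_{e}(L)$ with $K_{1} \subseteq I \cap J$ and $\delta|_{K_{1}} = \mu|_{K_{1}}$, and by $K_{2} \in \mathscr{J}_{e}(L)$ with $K_{2} \subseteq J \cap H$ and $\mu|_{K_{2}} = \nu|_{K_{2}}$. I would set $K = K_{1} \cap K_{2}$. Then $K \in \mathscr{J}_{e}(L)$ by the closure property above, and $K \subseteq (I \cap J) \cap (J \cap H) \subseteq I \cap H$. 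For any $x \in K$ we have $x \in K_{1}$ and $x \in K_{2}$, whence $\delta(x) = \mu(x) = \nu(x)$, so $\delta|_{K} = \nu|_{K}$ and $(\delta, I) \equiv (\nu, H)$.

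The only point requiring care—and the one I regard as the main (if mild) obstacle—is ensuring that the witnessing ideal $K = K_{1} \cap K_{2}$ genuinely belongs to $\mathscr{J}_{e}(L)$, i.e. that it is essential rather than merely an ideal. Essentiality is exactly the closure statement cited, and nonvanishing is automatic: an essential ideal of a nonzero algebra is nonzero, since taking $J = L$ in the definition of essential forces $K \cap L = K \neq \{0\}$. I would also remark that semiprimeness of $L$ is not actually invoked here—it is the standing hypothesis under which $\mathscr{D}$ will subsequently be used—so the equivalence-relation property rests solely on the lattice behaviour of essential ideals.
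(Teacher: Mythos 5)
Your proposal is correct and follows essentially the same route as the paper: check reflexivity, symmetry, and transitivity directly, with transitivity handled by intersecting the two witnessing essential ideals and invoking closure of $\mathscr{J}_{e}(L)$ under finite intersections. If anything, your write-up is slightly more careful than the paper's (which vaguely says ``take $T \subseteq P \cap Q$'' rather than taking $T = P \cap Q$ outright), and your observation that semiprimeness is never used is accurate.
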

\begin{proof}
(1) For any $(\delta, I) \in \mathscr{D}$, $\delta|_{I} = \delta|_{I}$, so $(\delta, I) \equiv (\delta, I)$.

(2) For any $(\delta, I), (\mu, J) \in \mathscr{D}$ and $(\delta, I) \equiv (\mu, J)$, there exists $K \in \mathscr{J}_{e}(L)$, $K \subseteq I \cap J$ such that $\delta|_{K} = \mu|_{K}$. It's obvious that $\mu|_{K} = \delta|_{K}$. Hence $(\mu, J) \equiv (\delta, I)$.

(3) For any $(\delta, I), (\xi, J), (\beta, K) \in \mathscr{D}$ and $(\delta, I) \equiv (\xi, J)$ and $(\xi, J) \equiv (\beta, K)$, there exist $P, Q \in \mathscr{J}_{e}(L)$, $P \subseteq I \cap J$, $Q \subseteq J \cap K$ such that $\delta|_{P} = \xi|_{P}$ and $\xi|_{Q} = \beta|_{Q}$. Since $P, Q \in \mathscr{J}_{e}(L)$, $P \cap Q \neq \emptyset$. Take $T \subseteq P \cap Q$. We also get $T \in \mathscr{J}_{e}(L)$. Moreover, $\delta|_{T} = \xi|_{T} = \beta|_{T}$. Therefore $(\delta, I) \equiv (\beta, K)$.
\end{proof}
\begin{no}\label{no:4.3}
Denote by $Q(L)$ the quotient set $\mathscr{D}/\equiv$. Let $\delta_{I}$ denote the equivalence class of $(\delta, I)$ in $Q(L)$.
\end{no}
\begin{thm}\label{thm:4.4}
Let $L$ be a semiprime Leibniz algebra over $\mathbb{F}$, and let $Q := Q(L)$ be as in Notation \ref{no:4.3}. Define the following maps:
\[\cdot : \mathbb{F} \times Q \rightarrow Q,\;(p, \delta_{I}) \mapsto (p\delta)_{I}\;\rm{where}\; \it{p}\delta : I \rightarrow L,\; y \mapsto \delta(py),\]
\[+ : Q \times Q \rightarrow Q,\;(\delta_{I}, \mu_{J}) \mapsto (\delta + \mu)_{I \cap J}\;\rm{where}\;\delta + \mu : \it{I} \cap J \rightarrow L,\; x \mapsto \delta(x) + \mu(x),\]
\[[\cdot, \cdot] : Q \times Q \rightarrow Q,\; (\delta_{I}, \mu_{J}) \mapsto [\delta, \mu]_{(I \cap J)^{2}}\;\rm{where}\;[\delta, \mu] : \it{(I \cap J)^{2}} \rightarrow L,\; x \mapsto \mu\delta(x) - \delta\mu(x).\]
Then $Q$, with these operations, is a Leibniz algebra containing $L$ as a subalgebra, via the monomorphism:
\[\varphi : L \rightarrow Q,\quad x \mapsto (R_{x})_{L}.\]
\end{thm}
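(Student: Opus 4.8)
The plan is to proceed in four stages: (i) check that the three operations are well defined on the equivalence classes of Lemma~\ref{le:4.2}; (ii) deduce the vector-space axioms; (iii) verify the right Leibniz identity; and (iv) show that $\varphi$ is an injective homomorphism, so that its image is a copy of $L$ inside $Q$. A single device runs throughout: any finite list of classes can be represented by partial derivations defined on a common essential ideal (the domains intersect in an essential ideal, and, $L$ being semiprime, we may pass to a power of that ideal, which is again essential). Once this is done, every identity between classes is tested pointwise on such an ideal, where it becomes an identity between honest maps into $L$.

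The only operation needing care is the bracket, and the essential point is the containment
\[\delta(D^{2}) \subseteq D \quad\text{for every ideal } D \subseteq I \text{ and every } \delta \in \mathrm{PDer}(I,L),\]
which holds because on a generator $[a,b]$ with $a,b \in D$ one has $\delta([a,b]) = [\delta(a),b] + [a,\delta(b)] \in [L,D] + [D,L] \subseteq D$. Taking $D = I \cap J$ gives $\delta((I\cap J)^{2}) \subseteq I\cap J \subseteq J$ and $\mu((I\cap J)^{2}) \subseteq I\cap J \subseteq I$, so both $\mu\delta$ and $\delta\mu$, hence $[\delta,\mu]$, are defined on $(I\cap J)^{2}$; and $(I\cap J)^{2}$ is essential because $L$ is semiprime. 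Independence of the result from the chosen representatives follows from the same containment, by restricting two competing brackets to $(K_{1}\cap K_{2})^{2}$, where $K_{1},K_{2}$ witness the two equivalences. For addition and scalar multiplication, well-definedness is immediate pointwise, using that $I\cap J$ is essential and that scalars commute with the (linear) partial derivations.

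The main computational obstacle is to show $[\delta,\mu] \in \mathrm{PDer}((I\cap J)^{2},L)$. For $x,y \in (I\cap J)^{2}$ the containment above yields $\delta(x),\delta(y),\mu(x),\mu(y) \in I\cap J$, so in the expansion of $\mu\delta([x,y]) = \mu([\delta(x),y] + [x,\delta(y)])$ every element to which $\mu$ is applied already lies in $I\cap J \subseteq J$; the partial-derivation rule of $\mu$ therefore applies legitimately, and likewise that of $\delta$. Expanding $\mu\delta([x,y])$ and $\delta\mu([x,y])$ and subtracting, the mixed cross-terms $[\delta(x),\mu(y)]$ and $[\mu(x),\delta(y)]$ cancel, leaving exactly $[[\delta,\mu](x),y] + [x,[\delta,\mu](y)]$, precisely as in the classical fact that the commutator of two derivations is a derivation. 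For the Leibniz identity on $Q$, I would observe that, as operators, $[\delta,\mu] = \mu\delta - \delta\mu$ is minus the ordinary commutator, which is antisymmetric and satisfies the Jacobi identity. Restricting $\delta,\mu,\nu$ to a common essential ideal small enough (a sufficiently high power of $I\cap J\cap K$, still essential by semiprimeness) that all compositions occurring on both sides are defined, the identity $[\delta,[\mu,\nu]] = [[\delta,\mu],\nu] - [[\delta,\nu],\mu]$ reduces to the pointwise operator Jacobi identity. Bilinearity of the bracket and the vector-space axioms are obtained in the same pointwise fashion.

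Finally, for $\varphi$ I would first note that each $R_{x}$ is a partial derivation of $L$ defined on the essential ideal $L$ itself: this is exactly the defining identity of a right Leibniz algebra, since $R_{x}([y,z]) = [[y,z],x] = [[y,x],z] + [y,[z,x]] = [R_{x}(y),z] + [y,R_{x}(z)]$. Linearity of $\varphi$ is immediate from bilinearity of the bracket. For multiplicativity, with $\delta = R_{x}$ and $\mu = R_{y}$ the $Q$-bracket gives $[R_{x},R_{y}](z) = R_{y}R_{x}(z) - R_{x}R_{y}(z) = [[z,x],y] - [[z,y],x]$, while $R_{[x,y]}(z) = [z,[x,y]]$; these agree for all $z$ by the right Leibniz identity, so $(R_{[x,y]})_{L}$ and $[R_{x},R_{y}]_{L^{2}}$ coincide on the essential ideal $L^{2}$ and hence define the same class. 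Injectivity is the last point: if $\varphi(x) = 0$ then $R_{x}$ vanishes on some essential ideal $K$, i.e. $[K,x] = \{0\}$, so $x \in ran_{L}(K)$; since $L$ is semiprime and $K$ is essential, Proposition~\ref{prop:2.8} gives $ran_{L}(K) = \{0\}$, whence $x = 0$. I expect the derivation property of the bracket in stage (i) and the Leibniz identity in stage (iii) to be the genuinely delicate parts, the remainder being routine bookkeeping carried out over common essential ideals.
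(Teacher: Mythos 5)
Your proposal is correct and follows essentially the same route as the paper: the bracket is shown to be a well-defined partial derivation on $(I\cap J)^{2}$ via the containment $\delta((I\cap J)^{2})\subseteq I\cap J$ and cancellation of cross-terms, the Leibniz identity is reduced over a common essential ideal to the operator identity for $\mu\delta-\delta\mu$, and injectivity of $\varphi$ comes from semiprimeness forcing the right annihilator of an essential ideal to vanish. If anything, you are more careful than the paper on two points it leaves implicit — independence of the bracket from chosen representatives, and the verification that $\varphi([x,y])$ and $[\varphi(x),\varphi(y)]$ agree on the essential ideal $L^{2}$ — so no gaps remain.
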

\begin{proof}
(1) For any $\delta_{I}, \mu_{J} \in Q$ and $x, y \in (I \cap J)^{2}$,
\begin{align*}
&[\delta, \mu]([x, y]) = \mu\delta([x, y]) - \delta\mu([x, y]) = \mu([\delta(x), y] + [x, \delta(y)]) - \delta([\mu(x), y] + [x, \mu(y)])\\
&= [\mu\delta(x), y] + [\delta(x), \mu(y)] + [\mu(x), \delta(y)] + [x, \mu\delta(y)] - [\delta\mu(x), y] - [\mu(x), \delta(y)]\\
&- [\delta(x), \mu(y)] - [x, \delta\mu(y)]\\
&= [[\delta, \mu](x), y] + [x, [\delta, \mu](y)],
\end{align*}
which makes sense since $\delta(x)$, $\delta(y)$, $\mu(x)$, $\mu(y)$, $[\delta(x), y]$, $[x, \delta(y)]$, $[\mu(x), y]$, $[x, \mu(y)] \in I \cap J$. Moreover, $\delta([x, y])$, $\mu([x, y]) \in I \cap J$. So $[\delta, \mu] \in \rm{PDer}\it{((I \cap J)^{2}, L)}$, i.e., $[\cdot, \cdot]$ is well-defined.

(2) It's obvious that $(Q, \cdot, +)$ is a vector space over $\mathbb{F}$. For any $\delta_{I}, \mu_{J}, \beta_{K} \in Q$(we can consider the same $T$ for $\delta$, $\mu$ and $\beta$ because if $I, J, K \in \mathscr{J}_{e}(L)$ are such that $\delta : I \rightarrow L$, $\mu : J \rightarrow L$, $\beta : K \rightarrow L$, then $T := I \cap J \cap K \in \mathscr{J}_{e}(L)$ and so $\delta_{I} = \delta_{T}$ and $\mu_{J} = \mu_{T}$ and $\beta_{K} = \beta_{T}$),
\begin{align*}
&[[\delta_{T}, \mu_{T}], \beta_{T}] - [[\delta_{T}, \beta_{T}], \mu_{T}] = [[\delta, \mu]_{T^{2}}, \beta_{T}] - [[\delta, \beta]_{T^{2}}, \mu_{T}]\\
&= [[\delta, \mu], \beta]_{(T^{2} \cap T)^{2}} - [[\delta, \beta], \mu]_{(T^{2} \cap T)^{2}} = ([[\delta, \mu], \beta] - [[\delta, \beta], \mu])_{(T^{2} \cap T)^{2}} = [\delta, [\mu, \beta]]_{(T^{2} \cap T)^{2}}\\
&= [\delta_{T}, [\mu, \beta]_{T^{2}}] = [\delta_{T}, [\mu_{T}, \beta_{T}]].
\end{align*}
Therefore, $Q$ is a Leibniz algebra.

(3) $\varphi$ is well-defined and a homomorphism by the definitions of Leibniz algebra and $[\cdot, \cdot]$. The injectivity of $\varphi$ follows since $(R_{x})_{L} = 0$ for some $x \in L$ means $[L, x] = \{0\}$ and hence $x \in ran(L)$, which is zero by the semiprimeness of $L$ and Proposition \ref{prop:2.6}.
\end{proof}
For any $X \subseteq L$, write $X^{\varphi}$ to denote the image of $X$ inside $Q(L)$ via the monomorphism defined in Theorem \ref{thm:4.4}.
\begin{lem}\label{le:4.5}
For every $\delta_{I} \in Q(L)$, and $(R_{x})_{L} \in I^{\varphi}(x \in I)$, we have $[\delta_{I}, (R_{x})_{L}] = (R_{-\delta(x)})_{L} \in L^{\varphi}$ and $[(R_{x})_{L}, \delta_{I}] = (R_{\delta(x)})_{L} \in L^{\varphi}$.
\end{lem}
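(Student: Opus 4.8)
The plan is to compute the two brackets directly from the definition of $[\cdot,\cdot]$ given in Theorem \ref{thm:4.4} and to recognise each result as a right multiplication operator by an element of $L$. First I would unwind that definition with the second argument $\mu = R_x$, viewed as a partial derivation on all of $L$ (so its label is $L$, although the hypothesis only requires $x \in I$). Since $I \cap L = I$, the representative of the bracket is placed on $(I \cap L)^2 = I^2$, and for $y \in I^2$ it reads
\[
[\delta, R_x](y) = R_x\delta(y) - \delta R_x(y) = [\delta(y), x] - \delta([y, x]).
\]

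Next I would invoke the partial derivation property of $\delta$. Because $x \in I$ and $y \in I^2 \subseteq I$, the element $[y,x]$ lies in $I$, so $\delta([y,x]) = [\delta(y), x] + [y, \delta(x)]$ is legitimate; substituting, the term $[\delta(y), x]$ cancels and we are left with $[\delta, R_x](y) = -[y, \delta(x)] = R_{-\delta(x)}(y)$. Here $\delta(x) \in L$ since $x$ lies in the domain $I$ of $\delta$, so $R_{-\delta(x)}$ is a genuine right multiplication by an element of $L$, i.e. $(R_{-\delta(x)})_L \in L^{\varphi}$. The computation for $[(R_x)_L, \delta_I]$ is identical up to a sign: $[R_x, \delta](y) = \delta([y,x]) - [\delta(y), x] = [y, \delta(x)] = R_{\delta(x)}(y)$.

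Finally, to pass from agreement of representatives on $I^2$ to equality of classes in $Q(L)$, I would recall that for semiprime $L$ the essentiality $I \in \mathscr{J}_e(L)$ implies $I^2 = [I,I] \in \mathscr{J}_e(L)$ (the observation recorded just before Proposition \ref{prop:3.13}). Taking $K = I^2 \subseteq I^2 \cap L$ as the common essential ideal, the definition of $\equiv$ in Lemma \ref{le:4.2} then gives $[\delta, R_x]_{I^2} = (R_{-\delta(x)})_L$ and $[R_x, \delta]_{I^2} = (R_{\delta(x)})_L$, which are exactly the two asserted identities.

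The algebra itself is routine; the only point requiring care is the bookkeeping of domains. One must ensure that every application of $\delta$ and of the partial derivation identity occurs on $I$, where $\delta$ is defined, while the bracket representative is correctly restricted to $I^2$, where the definition of $[\cdot,\cdot]$ places it, and then confirm $I^2$ is essential so that the two representatives determine the same equivalence class. I expect this domain tracking, rather than any genuine algebraic difficulty, to be the main thing to get right.
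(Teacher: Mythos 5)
Your proposal is correct and follows essentially the same route as the paper: compute $[\delta,R_x](y)=R_x\delta(y)-\delta R_x(y)$, apply the partial derivation identity to $\delta([y,x])$ so that $[\delta(y),x]$ cancels, and identify the result with $(R_{-\delta(x)})_L$ (and symmetrically $(R_{\delta(x)})_L$) as equivalence classes. The only difference is cosmetic: the paper computes on $y\in I$ and is terser about why the classes coincide, whereas you restrict to $I^2$ and explicitly invoke the essentiality of $I^2$, which is a careful but equivalent handling of the same bookkeeping.
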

\begin{proof}
For any $y \in I$,
\begin{align*}
&[\delta_{I}, (R_{x})_{L}](y) = R_{x}\delta(y) - \delta R_{x}(y) = [\delta(y), x] - \delta([y, x]) = [\delta(y), x] - [\delta(y), x] - [y, \delta(x)]\\
&=  - [y, \delta(x)] = R_{-\delta(x)}(y)
\end{align*}
and so $[\delta_{I}, (R_{x})_{L}] = (R_{-\delta(x)})_{I} = (R_{-\delta(x)})_{L} \in L^{\varphi}$.

Similarly, we have $[(R_{x})_{L}, \delta_{I}] = (R_{\delta(x)})_{L} \in L^{\varphi}$.
\end{proof}
\begin{prop}\label{prop:4.6}
Let $L$ be a semiprime Leibniz algebra. Then $Q(L)$ is semiprime and an algebra of quotients of $L$. Moreover, $Q(L)$ is maximal among the algebras of quotients of $L$, in the sense that if $S$ is an algebra of quotients of $L$, then there exists a monomorphism $\psi : S \rightarrow Q(L)$ which is the identity in $L$. In particular, the map
\[\psi : S \rightarrow Q(L),\quad s \mapsto (R_{s})_{(L : s)}\]
is a monomorphism which is the identity when restricted to $L$.
\end{prop}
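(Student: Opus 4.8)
The plan is to prove the three assertions in the order (i) $Q(L)$ is an algebra of quotients of $L$, (ii) $Q(L)$ is semiprime, and (iii) $Q(L)$ is maximal via the explicit map $\psi$; throughout I identify $L$ with its image $L^{\varphi}$ under the embedding $\varphi$ of Theorem \ref{thm:4.4}. The crucial preliminary observation for (i) is that $I^{\varphi} \subseteq (L : \delta_{I})$ for every $\delta_{I} \in Q(L)$. Indeed, $I^{\varphi}$ is an ideal of $L^{\varphi}$, and Lemma \ref{le:4.5} yields $[(R_{x})_{L}, \delta_{I}] = (R_{\delta(x)})_{L} \in L^{\varphi}$ and $[\delta_{I}, (R_{x})_{L}] = (R_{-\delta(x)})_{L} \in L^{\varphi}$ for all $x \in I$, so $[I^{\varphi}, \delta_{I}] \subseteq L^{\varphi}$ and $[\delta_{I}, I^{\varphi}] \subseteq L^{\varphi}$; the maximality clause of Proposition \ref{prop:3.1} then gives $I^{\varphi} \subseteq (L : \delta_{I})$.

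To verify the defining property, I would take $p = \mu_{J} \neq 0$ and $q = \delta_{I}$ in $Q(L)$. Since $I \cap J$ is an essential ideal and $\mu_{J} \neq 0$, the restriction $\mu|_{I \cap J}$ is nonzero (otherwise $(\mu, J) \equiv (0, L)$), so there is $w \in I \cap J$ with $\mu(w) \neq 0$. By Lemma \ref{le:4.5}, $[(R_{w})_{L}, \mu_{J}] = (R_{\mu(w)})_{L}$, and this element is nonzero: if $(R_{\mu(w)})_{L} = 0$ then $[K, \mu(w)] = \{0\}$ for some essential ideal $K$, forcing $\mu(w) \in ran_{L}(K) = \{0\}$ by Proposition \ref{prop:2.8}(2), a contradiction. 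As $w \in I^{\varphi} \subseteq (L : \delta_{I}) = (L : q)$, this realizes the definition of algebra of quotients. Assertion (ii) is then immediate: $Q(L)$ is in particular a weak algebra of quotients of $L$, so semiprimeness follows from Proposition \ref{prop:3.6}(2).

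For (iii), let $S$ be any algebra of quotients of $L$ and set $\psi(s) = (R_{s})_{(L:s)}$ with $R_{s}$ the right multiplication by $s$ in $S$. First I would check this is well defined: by Proposition \ref{prop:3.8}(1) the ideal $(L:s)$ is essential; for $x \in (L:s)$ one has $R_{s}(x) = [x,s] \in L$; and the right Leibniz identity $[[x,y],s] = [[x,s],y] + [x,[y,s]]$ shows $R_{s} \in \mathrm{PDer}((L:s),L)$, so $(R_{s})_{(L:s)} \in Q(L)$. Linearity of $\psi$ is checked by comparing the relevant partial derivations on $(L:s) \cap (L:t)$ (resp.\ on $(L:s)$), where they literally coincide. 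For the bracket, for $x \in ((L:s) \cap (L:t))^{2}$ one computes, using the definition of the bracket on $Q(L)$ and the right Leibniz identity, $[R_{s}, R_{t}](x) = [[x,s],t] - [[x,t],s] = [x,[s,t]] = R_{[s,t]}(x)$; since $((L:s) \cap (L:t))^{2} \cap (L:[s,t])$ is an essential ideal (using that squares of essential ideals are essential in the semiprime case) on which both partial derivations agree, one gets $[\psi(s), \psi(t)] = \psi([s,t])$. Injectivity: if $\psi(s) = 0$ then $R_{s}$ vanishes on some essential ideal $K \subseteq (L:s)$, i.e.\ $[K,s] = \{0\}$, so $s \in ran_{S}(K)$; but $ran_{L}(K) = \{0\}$ by Proposition \ref{prop:2.8}(2), hence $ran_{S}(K) = \{0\}$ by Proposition \ref{prop:3.11}, forcing $s = 0$. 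Finally $\psi$ restricts to the identity on $L$ because $(L:x) = L$ for $x \in L$, so $\psi(x) = (R_{x})_{L} = \varphi(x)$.

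The hard part will be showing that $\psi$ is a homomorphism, specifically reconciling the four distinct essential ideals $(L:s)$, $(L:t)$, $(L:[s,t])$ and the square $((L:s)\cap(L:t))^{2}$ that enters the definition of the bracket on $Q(L)$: one must confirm that the two partial derivations $[R_{s}, R_{t}]$ and $R_{[s,t]}$ genuinely represent the same class in $Q(L)$, i.e.\ that they agree on a common essential ideal. This is exactly the step where semiprimeness does the real work, both through the stability of essential ideals under intersection and squaring and through the lifting of annihilator conditions in Propositions \ref{prop:2.8} and \ref{prop:3.11}; the remaining verifications (well-definedness, linearity, injectivity, the identity on $L$) are routine once the bracket computation is in place.
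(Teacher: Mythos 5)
Your proposal is correct and takes essentially the same approach as the paper: the quotient property is established via Lemma \ref{le:4.5} (your appeal to the maximality clause of Proposition \ref{prop:3.1} is just a cleaner packaging of the paper's direct check that $(R_{a})_{L} \in (L^{\varphi} : \mu_{I})$), semiprimeness via Proposition \ref{prop:3.6}(2), and maximality via the same map $\psi(s) = (R_{s})_{(L:s)}$ with the identical injectivity argument through Propositions \ref{prop:2.8}(2) and \ref{prop:3.11}. The only difference is that you spell out the homomorphism property of $\psi$ (the bracket computation $[R_{s},R_{t}](x) = [x,[s,t]]$ on the common essential ideal $((L:s)\cap(L:t))^{2}\cap(L:[s,t])$), a verification the paper asserts without detail.
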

\begin{proof}
Take $\delta_{I}, \mu_{I} \in Q(L)$ with $\delta_{I} \neq 0$(we can consider the same $I$ for $\delta$ and $\mu$ because if $J, K \in \mathscr{J}_{e}(L)$ are such that $\delta : J \rightarrow L$, $\mu : K \rightarrow L$, then $I := J \cap K \in \mathscr{J}_{e}(L)$ and so $\delta_{J} = \delta_{I}$ and $\mu_{K} = \mu_{I}$). Choose $a \in I$ such that $\delta(a) \neq 0$. Then $(R_{a})_{L} \in L^{\varphi}$ satisfies $[(R_{a})_{L}, \delta_{I}] = (R_{\delta(a)})_{L} \neq \rm{0}$. Otherwise, $[L, \delta(a)] = \{0\}$, which implies that $\delta(a) \in ran(L) = \{0\}$. And for every $\lambda_{I} \in _{L^{\varphi}}(\mu_{I})$, $[\lambda_{I}, R_{\delta(a)}] = (R_{-\lambda(a)})_{L} \in L^{\varphi}$ and $[R_{a}, \lambda_{I}] = R_{\lambda(a)} \in L^{\varphi}$. This imply that $(R_{a})_{L} \in (L^{\varphi} : \mu_{I})$. Hence, $Q(L)$ is an algebra of quotients of $L^{\varphi}$. The semiprimeness of $Q(L)$ follows by Proposition \ref{prop:3.6} (2).

Now suppose that $S$ is an algebra of quotients of $L$ and consider the map
\[\psi : S \rightarrow Q(L),\quad s \mapsto (R_{s})_{(L : s)}.\]
According to the definition of $(L : s)$, $\psi$ is well-defined. Moreover, $\psi$ is a monomorphism. To prove the injectivity, suppose that $s \in S$ such that $\psi(s) = 0$, that is $[K, s] = \{0\}$ for some ideal $K \in \mathscr{J}_{e}(L)$, $K \subseteq (L : s)$. This implies that $s \in ran_{S}(K)$. Note that $K$ is essential, $ran_{S}(K) = \{\rm{0}\}$ according to Proposition \ref{prop:3.11}, so $s = 0$.
\end{proof}
\begin{defn}
For a semiprime Leibniz algebra $L$, the Leibniz algebra constructed in Theorem \ref{thm:4.4} is called the maximal algebra of quotients of $L$. Denote it by $Q_{m}(L)$.
\end{defn}
The axiomatic characterization of the Martindale ring of quotients given by D. Passman in \cite{P1} has inspired us to give the following description of the maximal algebra of quotients of a semiprime Leibniz algebra.
\begin{prop}\label{prop:4.8}
Let $L$ be a semiprime Leibniz algebra and consider an overalgebra $S$ of $L$. Then there exists a monomorphism between $S$ and $Q_{m}(L)$ which is the identity on $L$, if and only if $S$ satisfies the following properties:
\begin{enumerate}[(1)]
\item For any $s \in S$, there exists $I \in \mathscr{J}_{e}(L)$ such that $[I, s] \subseteq L$.
\item For $s \in S$ and $I \in \mathscr{J}_{e}(L)$, $[I, s] = \{0\}$ implies that $s = 0$.
\item For any $I \in \mathscr{J}_{e}(L)$, $\delta \in \rm{PDer}\it{(I, L)}$, there exists $s \in S$ such that $\delta(x) = [x, s]$ for every $x \in I$.
\end{enumerate}
\end{prop}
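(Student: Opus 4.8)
The plan is to study the natural candidate isomorphism $\psi\colon S\to Q_m(L)$ sending $s$ to the class $(R_s)_{I_s}$, where $I_s\in\mathscr{J}_e(L)$ is any essential ideal with $[I_s,s]\subseteq L$. Since right multiplications are derivations in a right Leibniz algebra, the restriction $R_s|_{I_s}$ is automatically a partial derivation, so this class does live in $Q_m(L)$. The engine driving both implications is Lemma \ref{le:4.5}, which evaluates the bracket of an embedded element $(R_x)_L$ against a class $\delta_I$ as $[(R_x)_L,\delta_I]=(R_{\delta(x)})_L$; this is precisely the bridge between the intrinsic bracket of $S$ and the derivation picture of $Q_m(L)$. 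I read the monomorphism asserted in the statement as an isomorphism onto $Q_m(L)$ fixing $L$, because property (3) is a surjectivity requirement: the inner partial derivations alone never exhaust $\mathrm{PDer}(I,L)$ when $L\subsetneq Q_m(L)$, so the map in the necessity direction must in fact be onto.

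For necessity, assume such an isomorphism $\psi$ fixing $L$ and write $\psi(s)=\delta_I$. For every $x\in I$ I would compute $\psi([x,s])=[(R_x)_L,\delta_I]=(R_{\delta(x)})_L=\psi(\delta(x))$ by Lemma \ref{le:4.5}, and injectivity forces $[x,s]=\delta(x)\in L$; this yields (1) with witnessing ideal $I$. For (2), if $[I,s]=\{0\}$ with $I$ essential and $\psi(s)=\mu_J$, the same computation on the essential ideal $I\cap J$ gives $(R_{\mu(x)})_L=0$, hence $\mu(x)\in ran(L)=\{0\}$ by Proposition \ref{prop:2.6}; so $\mu$ vanishes on an essential ideal, $\psi(s)=0$, and $s=0$. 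For (3), given $\delta\in\mathrm{PDer}(I,L)$ the class $\delta_I$ lies in $Q_m(L)$, so surjectivity supplies $s\in S$ with $\psi(s)=\delta_I$, and the same computation rewrites $\delta(x)=[x,s]$ for all $x\in I$.

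For sufficiency, I would build $\psi$ directly from (1): first check $R_s|_{I_s}$ is a partial derivation via the right Leibniz identity $[[x,y],s]=[[x,s],y]+[x,[y,s]]$, then check independence of the choice of $I_s$ because any two witnessing ideals agree on their essential intersection. The homomorphism property reduces to matching $R_{[s,t]}$ with the commutator $[R_s,R_t]$ on a common essential ideal: by the right Leibniz identity $R_{[s,t]}(x)=[x,[s,t]]=[[x,s],t]-[[x,t],s]$, which is exactly $[R_s,R_t](x)=R_tR_s(x)-R_sR_t(x)$ as dictated by the bracket of $Q_m(L)$. Injectivity is then immediate from (2), since $\psi(s)=0$ means $[K,s]=\{0\}$ on some essential $K$; surjectivity is the content of (3) after restricting to $I\cap I_s$; and $\psi|_L=\varphi$, so $\psi$ fixes $L$.

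The main obstacle I anticipate is the bookkeeping of essential ideals: every equality of classes must be certified on a common essential ideal, using that finite intersections, and squares of essential ideals in the semiprime case, remain in $\mathscr{J}_e(L)$, and the homomorphism check must be carried out on $(I_s\cap I_t)^2$ to conform to the definition of the bracket on $Q_m(L)$. A secondary point demanding care is the interpretation of \emph{monomorphism}: property (3) cannot follow from injectivity alone, so the necessity direction genuinely invokes surjectivity, and I would make explicit that the asserted map is an isomorphism fixing $L$.
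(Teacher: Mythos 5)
Your proposal is correct, and its sufficiency half coincides with the paper's own argument: the same map $s \mapsto (R_s)_{I_s}$, well-definedness from (1), injectivity from (2), surjectivity from (3), and the identity on $L$ via $\varphi$ — you merely fill in the homomorphism computation $R_{[s,t]}=R_tR_s-R_sR_t$ on $(I_s\cap I_t)^2$, which the paper dismisses with ``Moreover, it's a homomorphism.'' The necessity half is where you genuinely diverge. The paper verifies (1)--(3) directly for $Q(L)$ using the quotients machinery: Proposition \ref{prop:4.6} (that $Q(L)$ is an algebra of quotients of $L$) combined with Proposition \ref{prop:3.8}(1) for condition (1) and Proposition \ref{prop:3.11} for condition (2), and Lemma \ref{le:4.5} for condition (3); the transfer of these conditions from $Q_m(L)$ to a general $S$ is left implicit. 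You instead pull all three conditions back through the isomorphism $\psi$ itself, using only Lemma \ref{le:4.5}, the homomorphism property, and vanishing of annihilators in $L$; this is more self-contained (no appeal to the Section \ref{se:3} results) and it treats arbitrary $S$ rather than only $S=Q(L)$. Your decision to read ``monomorphism'' as ``isomorphism'' is also the right repair, not a liberty: as you observe, taking $S=L\subsetneq Q_m(L)$ gives a monomorphism fixing $L$ for which (3) fails, so the literal ``only if'' is false, and the paper's proof silently makes the same identification by proving the conditions only for $Q(L)$. One small refinement: in your argument for (2), the equality $(R_{\mu(x)})_L=0$ in $Q_m(L)$ means that $\mu(x)$ right-annihilates some \emph{essential ideal} $K$, not necessarily all of $L$, so the clean citation is $ran_L(K)=\{0\}$ from Proposition \ref{prop:2.8}(2) (or simply the injectivity of $\varphi$ from Theorem \ref{thm:4.4}) rather than Proposition \ref{prop:2.6} alone — the same shortcut the paper itself takes when proving $\varphi$ injective.
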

\begin{proof}
Define $\psi : S \rightarrow Q_{m}(L)$, $s \mapsto (R_{s})_{I}$ where $I$ is a nonzero essential ideal of $L$ satisfying $[I, s] \subseteq L$; this exists by (1), and so the map is well-defined. Moreover, it's a homomorphism.

The map $\psi$ is injective(if $(R_{s})_{I} = 0$ for some $s \in S$, by (2), $s = 0$) and surjective(consider $\delta_{I} \in Q(L)$, by (3), there exists $s \in S$ such that $\delta(x) = R_{s}(x)$ for each $x \in I$, hence $\delta_{I} = \psi(s)$).

Finally, $\psi$ is the identity on $L$, by identifying $L$ with $L^{\varphi}$, where $\varphi$ is the map defined in Theorem \ref{thm:4.4}.

Conversely, we'll prove that $Q(L)$ satisfies the three conditions.

(1) Consider $q \in Q(L)$. According to Propositions \ref{prop:3.8} (1) and \ref{prop:4.6}, $(L : q) \in \mathscr{J}_{e}(L)$, and by the definition, $[(L : q), q] \subseteq L$.

(2) Take $q \in Q(L)$ and $I \in \mathscr{J}_{e}(L)$ such that $[I, q] = \{0\}$. We have $q \in ran_{Q(L)}(I) = 0$ according to Propositions \ref{prop:3.11} and \ref{prop:4.6}.

(3) Given $I^{\varphi} \in \mathscr{J}_{e}(L^{\varphi})$ and $\bar{\delta} \in \rm{PDer}\it{(I^{\varphi}, L^{\varphi})}$, we have to find $q \in Q(L)$ such that for every $x^{\varphi} \in I^{\varphi}$, $\bar{\delta}(x^{\varphi}) = [x^{\varphi}, q]$. Define
\[\delta : I \rightarrow L,\quad x \mapsto \varphi^{-1}\bar{\delta}((R_{x})_{L}).\]
Then $q := \delta_{I}$ is a partial derivation of $I$ in $L$. Indeed, for any $x, y \in I$,
\begin{align*}
&q([x, y]) = \delta_{I}([x, y]) = \varphi^{-1}\bar{\delta}((R_{[x, y]})_{L}) = \varphi^{-1}\bar{\delta}([(R_{x})_{L}, (R_{y})_{L}])\\
&= \varphi^{-1}([\bar{\delta}((R_{x})_{L}), (R_{y})_{L}] + [(R_{x})_{L}, \bar{\delta}((R_{y})_{L})])\\
&= [\varphi^{-1}\bar{\delta}((R_{x})_{L}), \varphi^{-1}((R_{y})_{L})] + [\varphi^{-1}((R_{x})_{L}), \varphi^{-1}\bar{\delta}((R_{y})_{L})]\\
&= [\delta_{I}(x), y] + [x, \delta_{I}(y)] = [q(x), y] + [x, q(y)].
\end{align*}

Moreover, $[x^{\varphi}, q] = [(R_{x})_{L}, \delta_{I}] = (R_{\delta(x)})_{L} = (\delta(x))^{\varphi} = \bar{\delta}((R_{x})_{L}) = \bar{\delta}(x^{\varphi})$.
\end{proof}
\section{Algebras of quotients of associative algebras generated by right and left multiplication operators of Leibniz algebras}\label{se:5}
In \cite{PS}, authors examined how the notion of algebras of quotients for Lie algebras tied up with the corresponding well-known concept in the associative case. Inspired by the method in \cite{PS}, we mainly study the relationship between Leibniz algebras and the associative algebras generated by right and left multiplication operators of the corresponding Leibniz algebras of quotients. First of all, we will give some definitions and basic notations.

As defined above, $\mathscr{A}(L)$ denotes the associative subalgebra (possibly without identity) of $\rm{End}\it{(L)}$ generated by the elements $R_{x}$ and $L_{y}$ for $x, y$ in $L$.

By an extension of Leibniz algebras $L \subseteq Q$ we will mean that $L$ is a subalgebra of the Leibniz algebra $Q$.

Let $L \subseteq Q$ be an extension of Leibniz algebras and let $\mathscr{A}_{Q}(L)$ be the associative subalgebra of $\mathscr{A}(Q)$ generated by $\{R_{x}, L_{y} : x, y \in L\}$.

Recall that, given an associative algebra $A$ and a subset $X$ of $A$, we define the right annihilator of $X$ in $A$ as
\[ran_{A}(X) = \{a \in A | Xa = 0\},\]
which is always a right ideal of $A$ (and two-sided if $X$ is a right ideal). One similarly defines the left annihilator, which shall be denoted by $lan_{A}(X)$.
\begin{lem}\label{le:5.1}
Let $I$ be an ideal of a Leibniz algebra $L$ with $ran(L) = \{0\}$. Then $\rm{Ann}_{\it{L}}(\it{I}) = \{\rm{0}\}$ if and only if $ran_{\mathscr{A}(L)}(\mathscr{A}_{L}(I)) = \{0\}$.
\end{lem}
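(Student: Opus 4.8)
The statement is an equivalence, and I would treat the two implications separately, using throughout the fact that $\mathscr{A}_{L}(I)$ is generated as an associative algebra by the operators $R_{x}, L_{x}$ with $x \in I$. The crucial reduction is that an element $a \in \mathscr{A}(L)$ lies in $ran_{\mathscr{A}(L)}(\mathscr{A}_{L}(I))$ if and only if $R_{x}a = 0$ and $L_{x}a = 0$ for every $x \in I$: indeed every $\xi \in \mathscr{A}_{L}(I)$ is a sum of products of generators, and in each product $\eta_{k}\cdots\eta_{1}$ the rightmost factor $\eta_{1}$ is a generator adjacent to $a$, so $\eta_{1}a = 0$ forces $\xi a = 0$; the reverse inclusion is trivial since the generators themselves lie in $\mathscr{A}_{L}(I)$.

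For the forward implication, assume $\mathrm{Ann}_{L}(I) = \{0\}$ and take $a \in ran_{\mathscr{A}(L)}(\mathscr{A}_{L}(I))$. By the reduction, $R_{x}a = 0$ and $L_{x}a = 0$ for all $x \in I$. Evaluating at an arbitrary $z \in L$ gives $[a(z), x] = 0$ and $[x, a(z)] = 0$ for all $x \in I$, so $a(z) \in lan_{L}(I) \cap ran_{L}(I) = \mathrm{Ann}_{L}(I) = \{0\}$. Hence $a(z) = 0$ for every $z \in L$, i.e. $a = 0$, and the right annihilator is trivial. This direction uses nothing beyond the definitions.

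For the converse I argue contrapositively. Suppose $\mathrm{Ann}_{L}(I) \neq \{0\}$ and pick $0 \neq w \in \mathrm{Ann}_{L}(I)$, so $w \in lan_{L}(I) \cap ran_{L}(I)$. I claim $a := R_{w}$ is a nonzero element of $ran_{\mathscr{A}(L)}(\mathscr{A}_{L}(I))$. It is nonzero because $R_{w} = 0$ would force $[L, w] = \{0\}$, i.e. $w \in ran(L) = \{0\}$, contradicting $w \neq 0$. To place $R_{w}$ in the right annihilator I verify, via the reduction, that $R_{x}R_{w} = 0$ and $L_{x}R_{w} = 0$ for every $x \in I$. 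For $z \in L$ and $x \in I$ one has $R_{x}R_{w}(z) = [[z,w],x]$ and $L_{x}R_{w}(z) = [x,[z,w]]$. The right Leibniz identity gives $[[z,w],x] = [z,[w,x]] + [[z,x],w]$, where $[w,x] = 0$ since $w \in lan_{L}(I)$ and $[[z,x],w] = 0$ since $[z,x] \in I$ (as $I$ is an ideal) and $w \in ran_{L}(I)$; hence $R_{x}R_{w} = 0$. The left Leibniz identity gives $[x,[z,w]] = [[x,z],w] + [z,[x,w]]$, where $[x,w] = 0$ since $w \in ran_{L}(I)$ and $[[x,z],w] = 0$ since $[x,z] \in I$ and $w \in ran_{L}(I)$; hence $L_{x}R_{w} = 0$. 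Thus $R_{w}$ is a nonzero element of $ran_{\mathscr{A}(L)}(\mathscr{A}_{L}(I))$, completing the contrapositive.

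The only genuine computation is the converse, and its substance is that annihilating $R_{w}$ against \emph{all} generators of $\mathscr{A}_{L}(I)$ simultaneously requires both the right and the left Leibniz identities — this is precisely where the standing symmetric assumption of this section is indispensable, since a one-sided Leibniz structure would leave one of the two products $R_{x}R_{w}$, $L_{x}R_{w}$ uncontrolled. The hypothesis $ran(L) = \{0\}$ is needed only to guarantee $R_{w} \neq 0$. I expect no serious obstacle beyond bookkeeping the four vanishing terms correctly and confirming that the ideal property $[z,x],[x,z] \in I$ is what lets the annihilator membership of $w$ propagate.
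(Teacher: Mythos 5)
Your proof is correct and takes essentially the same approach as the paper's: the forward direction evaluates an element of the right annihilator against the generators $R_{x}, L_{x}$ ($x \in I$) at arbitrary points of $L$ to land in $\mathrm{Ann}_{L}(I)=\{0\}$, and the converse shows that $R_{w}$ for $w \in \mathrm{Ann}_{L}(I)$ is killed by all generators via the left and right Leibniz identities, with $ran(L)=\{0\}$ guaranteeing $R_{w}\neq 0$. The only cosmetic differences are that you argue the converse contrapositively where the paper argues it directly, and you make explicit the reduction to generators that the paper uses implicitly.
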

\begin{proof}
Suppose that $\rm{Ann}_{\it{L}}(\it{I}) = \{\rm{0}\}$. For any $\mu$ in $ran_{\mathscr{A}(L)}(\mathscr{A}_{L}(I))$, we have $R_{y}\mu = 0$ for any $y \in I$ and $L_{z}\mu = 0$ for any $z \in I$. In particular if $x \in L$, we get
\[0 = R_{y}\mu(x) = [\mu(x), y],\quad0 = L_{z}\mu(x) = [z, \mu(x)],\]
and these imply that $\mu(x) \in \rm{Ann}_{\it{L}}(\it{I}) = \rm{0}$. Hence, $\mu = 0$.

Conversely, suppose that $ran_{\mathscr{A}(L)}(\mathscr{A}_{L}(I)) = \{0\}$. Let $x$ be in $\rm{Ann}_{\it{L}}(\it{I})$. Then for any $y, z$ in $I$ and $u$ in $L$, we have
\[L_{y}R_{x}(u) = [y, [u, x]] = [[y, u], x] + [u, [y, x]] = 0,\]
\[R_{z}R_{x}(u) = [[u, x], z] = [[u, z], x] + [u, [x, z]] = 0,\]
which imply that $R_{x} \in ran_{\mathscr{A}(L)}(\mathscr{A}_{L}(I)) = 0$. Since by assumption $ran(L) = \{0\}$, we obtain $x = 0$.
\end{proof}
For a subset $X$ of an associative algebra $A$, denote by $\langle X \rangle_{A}^{l}$, $\langle X \rangle_{A}^{r}$ and $\langle X \rangle_{A}$ the left, right and two sided ideal of $A$, respectively, generated by $X$.
\begin{lem}\label{le:5.2}
Suppose that $L$ is a subalgebra of $Q$ and $I$ an ideal of $L$. Then
\[\langle \mathscr{A}_{Q}(I) \rangle_{\mathscr{A}_{Q}(L)}^{l} = \langle \mathscr{A}_{Q}(I) \rangle_{\mathscr{A}_{Q}(L)}^{r} = \langle \mathscr{A}_{Q}(I) \rangle_{\mathscr{A}_{Q}(L)}.\]
\end{lem}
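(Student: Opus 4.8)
The statement asserts that the left ideal, right ideal, and two-sided ideal of $\mathscr{A}_{Q}(L)$ generated by $\mathscr{A}_{Q}(I)$ all coincide. Since we always have $\langle \mathscr{A}_{Q}(I) \rangle_{\mathscr{A}_{Q}(L)}^{l} \subseteq \langle \mathscr{A}_{Q}(I) \rangle_{\mathscr{A}_{Q}(L)}$ and $\langle \mathscr{A}_{Q}(I) \rangle_{\mathscr{A}_{Q}(L)}^{r} \subseteq \langle \mathscr{A}_{Q}(I) \rangle_{\mathscr{A}_{Q}(L)}$, the plan is to prove the reverse containments, i.e.\ that the one-sided ideals are already two-sided. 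It suffices to show that the left ideal $\langle \mathscr{A}_{Q}(I) \rangle_{\mathscr{A}_{Q}(L)}^{l}$ is also a right ideal (and symmetrically for the right ideal), because a one-sided ideal that happens to be closed under multiplication on the other side is automatically the two-sided ideal generated by the same set.

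**The key computation.** The heart of the argument is to check that the generators of $\mathscr{A}_{Q}(I)$ absorb multiplication by the generators $R_{x}, L_{x}$ of $\mathscr{A}_{Q}(L)$ on the appropriate side, modulo $\mathscr{A}_{Q}(I)$ itself. Concretely, since $I$ is an ideal of $L$, I would establish commutation-type identities in $\mathscr{A}(Q)$ of the shape
\begin{align*}
R_{a} R_{x} &= R_{x} R_{a} + (\text{a generator indexed by } [a,x] \in I),\\
R_{a} L_{x} &= L_{x} R_{a} + (\text{a generator indexed by } [x,a]\ \text{or}\ [a,x] \in I),
\end{align*}
for $a \in I$ and $x \in L$, and likewise for $L_{a}R_{x}$ and $L_{a}L_{x}$. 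These are exactly the operator identities that follow from the symmetric Leibniz identities (recall that in this section all algebras are symmetric), applied to an arbitrary element of $Q$ and read off as an equation between endomorphisms. The crucial point is that because $I$ is an ideal, the correction terms $[a,x], [x,a]$ lie back in $I$, so each product $(\text{generator of } \mathscr{A}_{Q}(L))\cdot(\text{generator of } \mathscr{A}_{Q}(I))$ can be rewritten as $(\text{generator of } \mathscr{A}_{Q}(I))\cdot(\text{something in } \mathscr{A}_{Q}(L)) + (\text{generator of } \mathscr{A}_{Q}(I))$, which visibly lands in the left ideal $\langle \mathscr{A}_{Q}(I) \rangle_{\mathscr{A}_{Q}(L)}^{l}$.

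**Propagating to words.** Having handled products of single generators, I would then argue that the left ideal is closed under right multiplication by an arbitrary element of $\mathscr{A}_{Q}(L)$. Since $\mathscr{A}_{Q}(L)$ is generated as an associative algebra by the $R_{x}, L_{x}$ with $x \in L$, and since $\mathscr{A}_{Q}(I)$ is spanned by words in $R_{a}, L_{a}$ with $a \in I$, an element of the left ideal is a finite sum $\sum_{k} \alpha_{k} w_{k}$ with $\alpha_{k} \in \mathscr{A}_{Q}(L)$ and $w_{k} \in \mathscr{A}_{Q}(I)$. Right-multiplying by a generator $R_{x}$, I move $R_{x}$ leftward through the final letter of $w_{k}$ using the identities above; each move either commutes $R_{x}$ past a generator of $\mathscr{A}_{Q}(I)$ (staying in $\mathscr{A}_{Q}(I)$ up to an element of $\mathscr{A}_{Q}(I)$) or produces a lower-length correction word still indexed by $I$. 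An induction on the length of $w_{k}$ then shows the result stays in the left ideal. The symmetric argument gives that the right ideal is closed under left multiplication, so both one-sided ideals equal the two-sided ideal.

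**Main obstacle.** The routine bilinearity and the passage from generators to arbitrary words are harmless; the genuinely delicate part is verifying the four operator identities cleanly and confirming that the correction terms stay inside $\mathscr{A}_{Q}(I)$ rather than merely inside $\mathscr{A}_{Q}(L)$. This is precisely where the hypothesis that $I$ is an ideal (so $[a,x],[x,a]\in I$) and the symmetry of the Leibniz bracket (so that both the left and right multiplication operators are derivations, giving usable identities for all four generator-pair products) are essential. Once those identities are in hand with their indices confirmed to lie in $I$, the rest is bookkeeping.
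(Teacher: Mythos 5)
Your proposal is correct and takes essentially the same route as the paper: the paper's proof consists precisely of the four commutation identities $R_{x}R_{y} = R_{y}R_{x} + R_{[y, x]}$, $R_{x}L_{y} = L_{y}R_{x} + L_{[y, x]}$, $L_{x}R_{y} = R_{y}L_{x} + R_{[x, y]}$, $L_{x}L_{y} = L_{y}L_{x} + L_{[x, y]}$ for $x \in L$, $y \in I$ (with correction terms indexed by elements of $I$, exactly the point you emphasize), from which it concludes $\langle \mathscr{A}_{Q}(I) \rangle_{\mathscr{A}_{Q}(L)}^{l} = \langle \mathscr{A}_{Q}(I) \rangle_{\mathscr{A}_{Q}(L)}^{r}$ and hence the equality with the two-sided ideal. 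The only difference is that the paper leaves the propagation from single generators to arbitrary words entirely implicit, whereas you spell out that induction.
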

\begin{proof}
Notice that given $x \in L$ and $y \in I$, we have
\[R_{x}R_{y} = R_{y}R_{x} + R_{[y, x]},\;R_{x}L_{y} = L_{y}R_{x} + L_{[y, x]},\;L_{x}R_{y} = R_{y}L_{x} + R_{[x, y]},\;L_{x}L_{y} = L_{y}L_{x} + L_{[x, y]}.\]
Then we have $\langle \mathscr{A}_{Q}(I) \rangle_{\mathscr{A}_{Q}(L)}^{l} = \langle \mathscr{A}_{Q}(I) \rangle_{\mathscr{A}_{Q}(L)}^{r}$. Hence we come to the conclusion.
\end{proof}
\begin{lem}\label{le:5.3}
Suppose that $L$ is a subalgebra of $Q$ and $I$ an ideal of $L$. Write $\tilde{I}$ to denote the ideal of $\mathscr{A}_{Q}(L)$ generated by $\mathscr{A}_{Q}(I)$. Then
\begin{enumerate}[(1)]
\item $ran_{\mathscr{A}_{Q}(L)}(\tilde{I}) = ran_{\mathscr{A}_{Q}(L)}(\mathscr{A}_{Q}(I))$.
\item $lan_{\mathscr{A}_{Q}(L)}(\tilde{I}) = lan_{\mathscr{A}_{Q}(L)}(\mathscr{A}_{Q}(I))$.
\end{enumerate}
\end{lem}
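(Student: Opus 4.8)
The plan is to establish each identity as a pair of inclusions, where one direction is immediate and the other is exactly where Lemma~\ref{le:5.2} carries the weight. For part (1), since $\mathscr{A}_{Q}(I) \subseteq \tilde{I}$, any $\mu$ annihilating $\tilde{I}$ on the right certainly annihilates the smaller set $\mathscr{A}_{Q}(I)$, so $ran_{\mathscr{A}_{Q}(L)}(\tilde{I}) \subseteq ran_{\mathscr{A}_{Q}(L)}(\mathscr{A}_{Q}(I))$ with no effort. The content is the reverse inclusion.

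For the reverse inclusion I would first record the description of a generated one-sided ideal in a (possibly non-unital) associative algebra: for a subspace $X$ of $A$, the left ideal it generates is $\langle X \rangle_{A}^{l} = X + AX$ and the right ideal is $\langle X \rangle_{A}^{r} = X + XA$. Now fix $\mu \in ran_{\mathscr{A}_{Q}(L)}(\mathscr{A}_{Q}(I))$, that is, $\mathscr{A}_{Q}(I)\mu = 0$. The key move is to invoke Lemma~\ref{le:5.2}, which lets me present $\tilde{I}$ as the \emph{left} ideal generated by $\mathscr{A}_{Q}(I)$, namely $\tilde{I} = \mathscr{A}_{Q}(I) + \mathscr{A}_{Q}(L)\mathscr{A}_{Q}(I)$. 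A typical element of $\tilde{I}$ then has the shape $a_{0} + \sum_{j} c_{j}a_{j}$ with $a_{0}, a_{j} \in \mathscr{A}_{Q}(I)$ and $c_{j} \in \mathscr{A}_{Q}(L)$, and right-multiplying by $\mu$ gives $a_{0}\mu + \sum_{j} c_{j}(a_{j}\mu) = 0$, since each factor $a_{0}\mu$ and $a_{j}\mu$ vanishes. Hence $\mu \in ran_{\mathscr{A}_{Q}(L)}(\tilde{I})$, completing (1).

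Part (2) is the mirror image: the inclusion $lan_{\mathscr{A}_{Q}(L)}(\tilde{I}) \subseteq lan_{\mathscr{A}_{Q}(L)}(\mathscr{A}_{Q}(I))$ is again free, and for the converse I would take $\mu$ with $\mu\mathscr{A}_{Q}(I) = 0$ and use Lemma~\ref{le:5.2} in its \emph{right}-ideal form, $\tilde{I} = \mathscr{A}_{Q}(I) + \mathscr{A}_{Q}(I)\mathscr{A}_{Q}(L)$, so that for $t = a_{0} + \sum_{j} a_{j}b_{j}$ one gets $\mu t = \mu a_{0} + \sum_{j} (\mu a_{j})b_{j} = 0$.

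The step I expect to be the crux --- and the reason Lemma~\ref{le:5.2} is placed immediately before this one --- is the choice of presentation of $\tilde{I}$: for the right annihilator I must write $\tilde{I}$ using left multipliers, and for the left annihilator using right multipliers. If instead one expands the right annihilator against the right-ideal form $\mathscr{A}_{Q}(I) + \mathscr{A}_{Q}(I)\mathscr{A}_{Q}(L)$, a generic term $a_{j}b_{j}\mu = a_{j}(b_{j}\mu)$ is not visibly annihilated, because $b_{j}\mu$ need not lie in $\mathscr{A}_{Q}(I)$; it is precisely the coincidence $\langle \mathscr{A}_{Q}(I)\rangle_{\mathscr{A}_{Q}(L)}^{l} = \langle \mathscr{A}_{Q}(I)\rangle_{\mathscr{A}_{Q}(L)}^{r}$ guaranteed by Lemma~\ref{le:5.2} that removes this obstruction.
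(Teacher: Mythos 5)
Your proof is correct and follows essentially the same route as the paper: the easy inclusion from $\mathscr{A}_{Q}(I) \subseteq \tilde{I}$, then Lemma~\ref{le:5.2} used to present $\tilde{I}$ in the one-sided form adapted to the annihilator at hand (left-ideal form $\mathscr{A}_{Q}(I) + \mathscr{A}_{Q}(L)\mathscr{A}_{Q}(I)$ for the right annihilator, and the mirror form for the left annihilator), so that $\mu$ kills every term factor by factor. The paper phrases this by expanding elements of $\tilde{I}$ as sums of words ending in a generator $R_{y}$ or $L_{y}$ with $y \in I$, which is exactly your decomposition written out explicitly.
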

\begin{proof}
(1) Since $\mathscr{A}_{Q}(I) \subseteq \tilde{I}$, we have $ran_{\mathscr{A}_{Q}(L)}(\tilde{I}) \subseteq ran_{\mathscr{A}_{Q}(L)}(\mathscr{A}_{Q}(I))$. So it's enough to show that $ran_{\mathscr{A}_{Q}(L)}(\mathscr{A}_{Q}(I)) \subseteq ran_{\mathscr{A}_{Q}(L)}(\tilde{I})$. Let $\lambda \in ran_{\mathscr{A}_{Q}(L)}(\mathscr{A}_{Q}(I))$. By Lemma \ref{le:5.2} we know that, if $\mu \in \tilde{I}$ there exist a natural number $n$, elements $x_{1, i}, \cdots, x_{r_{i}, i} \in L$ and $y_{1, i}, \cdots, y_{s_{i}, i} \in I$ with $0 \leq r_{i} \in \mathbb{N}$ for all $i$ and $\emptyset \neq \{s_{1}, \cdots, s_{n}\} \subseteq \mathbb{N}$, such that
\[\mu = \sum_{i}^{n}\xi_{1, i}\cdots\xi_{r_{i}, i}\eta_{1, i}\cdots\eta_{s_{i}, i}\]
where $\xi_{j, i} = R_{x_{j, i}}$ or $L_{x_{j, i}}$ for $1 \leq j \leq r_{i}$ and $\eta_{k, i} = R_{y_{k, i}}$ or $L_{y_{k, i}}$ for $1 \leq k \leq s_{i}$.

Since $\eta_{s_{i}, i}\lambda = 0$, we see that $\mu\lambda = 0$.

(2) The proof is similar to (1).
\end{proof}
\begin{lem}\label{le:5.4}
Suppose that $L$ is a subalgebra of $Q$ such that $Q$ is a weak algebra of quotients of $L$. Let $I$ be an ideal of $L$. If $\rm{Ann}_{\it{L}}(\it{I}) = \{\rm{0}\}$, then $ran_{\mathscr{A}(Q)}(\mathscr{A}_{Q}(I)) = \{0\}$.
\end{lem}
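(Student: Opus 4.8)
The plan is to reduce the statement to Proposition \ref{prop:3.11} by translating the associative right-annihilator condition into a pointwise condition on the underlying Leibniz algebra $Q$, thereby avoiding any computation resembling Lemma \ref{le:5.1} (which handled the analogous statement inside $\mathscr{A}(L)$).

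First I would unwind the meaning of membership in $ran_{\mathscr{A}(Q)}(\mathscr{A}_{Q}(I))$. Since $\mathscr{A}_{Q}(I)$ is by definition the associative subalgebra of $\mathscr{A}(Q)$ generated by $\{R_{y}, L_{y} : y \in I\}$, and any product $\eta_{1}\cdots\eta_{k}$ of such generators satisfies $\eta_{1}\cdots\eta_{k}\mu = \eta_{1}\cdots\eta_{k-1}(\eta_{k}\mu)$, an element $\mu \in \mathscr{A}(Q)$ lies in $ran_{\mathscr{A}(Q)}(\mathscr{A}_{Q}(I))$ if and only if $R_{y}\mu = 0$ and $L_{y}\mu = 0$ for every $y \in I$.

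Next I would evaluate these operator identities pointwise. For each $q \in Q$ and $y \in I$ one has $R_{y}\mu(q) = [\mu(q), y]$ and $L_{y}\mu(q) = [y, \mu(q)]$. Hence $R_{y}\mu = 0$ for all $y \in I$ says exactly that $\mu(q) \in lan_{Q}(I)$ for every $q$, while $L_{y}\mu = 0$ for all $y \in I$ says that $\mu(q) \in ran_{Q}(I)$ for every $q$. Combining the two, $\mu(q) \in lan_{Q}(I) \cap ran_{Q}(I) = \mathrm{Ann}_{Q}(I)$ for all $q \in Q$. Finally I would invoke Proposition \ref{prop:3.11}: since $Q$ is a weak algebra of quotients of $L$ and $I$ is an ideal of $L$ with $\mathrm{Ann}_{L}(I) = \{0\}$, it follows that $\mathrm{Ann}_{Q}(I) = \{0\}$. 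Therefore $\mu(q) = 0$ for all $q \in Q$, i.e. $\mu = 0$, which yields $ran_{\mathscr{A}(Q)}(\mathscr{A}_{Q}(I)) = \{0\}$.

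The whole content of the argument is the reduction carried out in the first two steps, namely recognizing that the associative annihilator condition collapses to the single Leibniz-level statement $\mu(Q) \subseteq \mathrm{Ann}_{Q}(I)$; once that is in place, Proposition \ref{prop:3.11} does all the work. I do not anticipate a genuine obstacle, but the one point that demands care is matching each generator $R_{y}$, $L_{y}$ to the correct one-sided annihilator $lan_{Q}(I)$, $ran_{Q}(I)$, so that the intersection delivering $\mathrm{Ann}_{Q}(I)$ is formed correctly.
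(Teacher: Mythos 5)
Your proposal is correct and is essentially identical to the paper's own proof: both reduce membership in $ran_{\mathscr{A}(Q)}(\mathscr{A}_{Q}(I))$ to the generator conditions $R_{y}\mu = 0$, $L_{y}\mu = 0$ for $y \in I$, evaluate pointwise to conclude $\mu(q) \in \mathrm{Ann}_{Q}(I)$ for all $q \in Q$, and then invoke Proposition \ref{prop:3.11} to get $\mathrm{Ann}_{Q}(I) = \{0\}$ and hence $\mu = 0$.
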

\begin{proof}
According to Proposition \ref{prop:3.11}, we have $\rm{Ann}_{\it{Q}}(\it{I}) = \{\rm{0}\}$. For any $\mu \in ran_{\mathscr{A}(Q)}(\mathscr{A}_{Q}(I))$, we have $R_{y}\mu = 0$ for any $y \in I$ and $L_{z}\mu = 0$ for any $z \in I$. If $q \in Q$, then we have
\[0 = R_{y}\mu(q) = [\mu(q), y],\quad0 = L_{z}\mu(q) = [z, \mu(q)],\]
which imply that $\mu(q) \in \rm{Ann}_{\it{Q}}(\it{I}) = \rm{0}$, and so $\mu = 0$.
\end{proof}
\begin{lem}\label{le:5.5}
Suppose that $L$ is a subalgebra of $Q$ and let $x_{1}, \cdots, x_{n}$, $y$ in $L$. Then we have, in $\mathscr{A}(Q)$:
\[\xi_{1}\cdots\xi_{n}\eta = \eta\xi_{1}\cdots\xi_{n} + \sum_{i = 1}^{n}\xi_{1}\cdots\xi_{i - 1}\delta_{i}\xi_{i + 1}\cdots\xi_{n}\]
where $\xi_{i} = R_{x_{i}}$ or $L_{x_{i}}$ for $1 \leq i \leq n$, $\eta = R_{y}$ or $L_{y}$ and $\delta_{i}$ is of one of the following forms $R_{[x_{i}, y]}$, $R_{[y, x_{i}]}$, $L_{[x_{i}, y]}$, $L_{[y, x_{i}]}$.

In particular, if $I$ is an ideal of $L$ and $x_{1}, \cdots, x_{n} \in I$, then
\[\xi_{1}\cdots\xi_{n}\eta = \eta\xi_{1}\cdots\xi_{n} + \alpha\]
where $\alpha \in span\{\rho_{1}\cdots\rho_{n}\;|\;\rho_{i} = R_{z_{i}}\;\rm{or}\;\it{L_{z_{i}}}\;\rm{with}\;\it{z_{i}} \in I,\;\rm{1} \leq \it{i} \leq n\}$.
\end{lem}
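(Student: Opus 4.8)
The plan is to prove both assertions by induction on $n$, using the four straightening relations recorded in the proof of Lemma \ref{le:5.2} as the basic commutation rules. Concretely, for $a, b \in L$ those relations read $R_a R_b = R_b R_a + R_{[b,a]}$, $R_a L_b = L_b R_a + L_{[b,a]}$, $L_a R_b = R_b L_a + R_{[a,b]}$ and $L_a L_b = L_b L_a + L_{[a,b]}$, all holding in $\mathscr{A}(Q)$ since $L \subseteq Q$. Reading each of these as $\xi\eta = \eta\xi + \delta$ with $\xi \in \{R_a, L_a\}$ and $\eta \in \{R_b, L_b\}$, I observe that the correction term $\delta$ is in every case one of $R_{[a,b]}, R_{[b,a]}, L_{[a,b]}, L_{[b,a]}$; taking $a = x_1$, $b = y$ this is exactly the base case $n=1$ of the lemma, and in particular confirms that $\delta_1$ always lands in the prescribed four-element list.

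For the inductive step I would move $\eta$ leftward one factor at a time. Assuming the identity for $n-1$, I first apply the base commutation to the last factor, $\xi_n\eta = \eta\xi_n + \delta_n$, so that
\[\xi_1 \cdots \xi_n \eta = (\xi_1 \cdots \xi_{n-1}\eta)\xi_n + \xi_1 \cdots \xi_{n-1}\delta_n.\]
Applying the induction hypothesis to $\xi_1 \cdots \xi_{n-1}\eta$ and multiplying the resulting identity on the right by $\xi_n$ produces $\eta\xi_1 \cdots \xi_n$ together with the terms $\xi_1 \cdots \xi_{i-1}\delta_i\xi_{i+1}\cdots\xi_n$ for $1 \le i \le n-1$; the leftover summand $\xi_1 \cdots \xi_{n-1}\delta_n$ is precisely the $i=n$ term, with the empty tail $\xi_{n+1}\cdots\xi_n$. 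Collecting everything yields the displayed formula.

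For the ``in particular'' statement I would simply track where the elements producing the $\delta_i$ come from. When $x_1, \dots, x_n \in I$, each commutator $[x_i, y]$ and $[y, x_i]$ again lies in $I$ because $I$ is an ideal of $L$; hence every $\delta_i$ is $R_z$ or $L_z$ for some $z \in I$. Since each $\xi_j$ with $j \ne i$ is $R_{x_j}$ or $L_{x_j}$ with $x_j \in I$, every correction term $\xi_1 \cdots \xi_{i-1}\delta_i\xi_{i+1}\cdots\xi_n$ is a product of exactly $n$ operators, each of the form $R_z$ or $L_z$ with $z \in I$, so their sum $\alpha$ lies in the claimed span.

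I expect no genuine obstacle here: the content is carried entirely by Lemma \ref{le:5.2}, and the only care needed is the bookkeeping of the four type-combinations for $(\xi_i, \eta)$ to verify that $\delta_i$ never leaves the prescribed list, together with the off-by-one indexing in the inductive step to see that the freshly produced summand slots in as the $i=n$ term.
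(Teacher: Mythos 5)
Your proof is correct and takes essentially the same approach as the paper: induction on $n$, with the base case given by the four commutation relations underlying Lemma \ref{le:5.2}, and the inductive step obtained by first commuting $\eta$ past $\xi_{n}$ and then applying the induction hypothesis to $\xi_{1}\cdots\xi_{n-1}\eta$, the leftover term becoming the $i=n$ summand. Your explicit bookkeeping for the ``in particular'' statement (that each $\delta_{i}$ is $R_{z}$ or $L_{z}$ with $z=[x_{i},y]$ or $[y,x_{i}]\in I$, so every correction term is a product of exactly $n$ such operators) is a harmless addition that the paper leaves implicit.
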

\begin{proof}
Let's prove the conclusion by induction on $n$. When $n = 1$,
in the case of $\xi_{1} = R_{x_{1}}$ and $\eta = R_{y}$, we have
\[\xi_{1}\eta = R_{x_{1}}R_{y} = R_{y}R_{x_{1}} + R_{[y, x_{1}]} = \eta\xi_{1} + \delta_{1};\]
in the case of $\xi_{1} = R_{x_{1}}$ and $\eta = L_{y}$, we have
\[\xi_{1}\eta = R_{x_{1}}L_{y} = L_{y}R_{x_{1}} + L_{[y, x_{1}]} = \eta\xi_{1} + \delta_{1};\]
in the case of $\xi_{1} = L_{x_{1}}$ and $\eta = R_{y}$, we have
\[\xi_{1}\eta = L_{x_{1}}R_{y} = R_{y}L_{x_{1}} + R_{[x_{1}, y]} = \eta\xi_{1} + \delta_{1};\]
in the case of $\xi_{1} = L_{x_{1}}$ and $\eta = L_{y}$, we have
\[\xi_{1}\eta = L_{x_{1}}L_{y} = L_{y}L_{x_{1}} + L_{[x_{1}, y]} = \eta\xi_{1} + \delta_{1}.\]
Suppose that the conclusion is valid for $n = k$. Then for $n = k + 1$,
\begin{align*}
&\xi_{1}\cdots\xi_{k}\xi_{k + 1}\eta = \xi_{1}\cdots\xi_{k}(\eta\xi_{k + 1} + \delta_{k + 1}) = \xi_{1}\cdots\xi_{k}\eta\xi_{k + 1} + \xi_{1}\cdots\xi_{k}\delta_{k + 1}\\
&= \left(\eta\xi_{1}\cdots\xi_{k} + \sum_{i = 1}^{k}\xi_{1}\cdots\xi_{i - 1}\delta_{i}\xi_{i + 1}\cdots\xi_{k}\right)\xi_{k + 1} + \xi_{1}\cdots\xi_{k}\delta_{k + 1}\\
&= \eta\xi_{1}\cdots\xi_{k + 1} + \sum_{i = 1}^{k + 1}\xi_{1}\cdots\xi_{i - 1}\delta_{i}\xi_{i + 1}\cdots\xi_{k + 1}.
\end{align*}
The proof is completed.
\end{proof}
Let $L$ be a subalgebra of $Q$. Denote by $\mathscr{A}_{0}$ the associative subalgebra of $\mathscr{A}(Q)$ whose elements are those $\mu$ in $\mathscr{A}(Q)$ such that $\mu(L) \subseteq L$. We obviously have the containments:
\[\mathscr{A}_{Q}(L) \subseteq \mathscr{A}_{0} \subseteq \mathscr{A}(Q).\]
\begin{lem}\label{le:5.6}
Suppose that $L$ is a subalgebra of $Q$ and $I$ an ideal of $L$. Let $q_{1}, \cdots, q_{n}$ in $Q$ such that $[q_{i}, I] \subseteq L$ and $[I, q_{i}] \subseteq L$ for every $i = 1, \cdots, n$. Then for $\mu = \xi_{1}\cdots\xi_{n}$ in $\mathscr{A}(Q)$ where $\xi_{i} = R_{q_{i}}$ or $L_{q_{i}}$ for each $1 \leq i \leq n$, we have that $\mu(\tilde{I})^{n} + (\tilde{I})^{n}\mu \subseteq \mathscr{A}_{0}$ (where $(\tilde{I})^{n}$ denotes the $n$-th power of $\tilde{I}$ in the associative algebra $\mathscr{A}_{Q}(L)$).
\end{lem}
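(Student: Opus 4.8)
The plan is to prove the two containments $\mu(\tilde I)^n\subseteq\mathscr{A}_0$ and $(\tilde I)^n\mu\subseteq\mathscr{A}_0$ by induction on $n$, using the one-step commutation relations underlying Lemmas \ref{le:5.2} and \ref{le:5.5} (which hold verbatim in $\mathscr{A}(Q)$ since $Q$ is symmetric Leibniz) to redistribute the $I$-multiplications carried by $(\tilde I)^n$ among the operators $\xi_1,\dots,\xi_n$. I will treat $\mu(\tilde I)^n$; the inclusion $(\tilde I)^n\mu$ is the mirror image, because the hypotheses $[q_i,I]\subseteq L$, $[I,q_i]\subseteq L$ and the definition of $\mathscr{A}_0$ are symmetric in left and right, and $\tilde I$ is a two-sided ideal.

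First I would make two reductions. By Lemma \ref{le:5.3} every element of $\tilde I$ is a combination of monomials containing at least one $I$-multiplication, so every element of $(\tilde I)^n$ is a combination of monomials carrying at least $n$ of them; moving all such $I$-multiplications to the left by the relations of Lemma \ref{le:5.2} (each interchange only creating further $I$-multiplications), I may assume the factor is a single monomial $\nu=\eta_1\cdots\eta_s\zeta_1\cdots\zeta_t$ with $s\geq n$, where $\eta_1,\dots,\eta_s$ are multiplications by elements of $I$ and $\zeta_1,\dots,\zeta_t$ by elements of $L$. Second, I would isolate the \emph{feeding principle}: a product of multiplication operators keeps $L$ invariant as soon as, read from the right, each $q$-multiplication $R_{q_i}$ or $L_{q_i}$ acts on an element of $I$. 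Indeed $L$- and $I$-multiplications preserve $L$, an $I$-multiplication sends $L$ into $I$, and $R_{q_i},L_{q_i}$ send $I$ into $L$ precisely because $[I,q_i],[q_i,I]\subseteq L$; so such a monomial lies in $\mathscr{A}_0$. The role of the $n$-th power is to furnish, after rearrangement, one $I$-multiplication to feed each of the $n$ operators $\xi_i$.

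For the inductive step I would move an $I$-multiplication leftward out of the trailing block $\eta_1\cdots\eta_s$ and past the $q$-multiplications, using for $z\in I$ the relations
\[R_{q_i}R_z=R_zR_{q_i}+R_{[z,q_i]},\quad R_{q_i}L_z=L_zR_{q_i}+L_{[z,q_i]},\quad L_{q_i}R_z=R_zL_{q_i}+R_{[q_i,z]},\quad L_{q_i}L_z=L_zL_{q_i}+L_{[q_i,z]}.\]
The hypotheses $[q_i,I],[I,q_i]\subseteq L$ are exactly what makes every correction operator $R_{[z,q_i]},L_{[z,q_i]},R_{[q_i,z]},L_{[q_i,z]}$ an honest $L$-multiplication. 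Each application deposits the $I$-multiplication in the gap immediately to the right of some $\xi_i$ (so that it feeds $\xi_i$) in the \emph{straightened} term, which retains all $n$ of its $q$-multiplications; the accompanying \emph{correction} term has that $\xi_i$ replaced by an $L$-multiplication and has consumed one $I$-multiplication, hence is of the form $\mu'\sigma$ with $\mu'$ a product of $n-1$ $q$-multiplications and some $L$-multiplications and $\sigma\in(\tilde I)^{n-1}$. After finitely many such moves the leading term has one $I$-multiplication feeding each $\xi_i$, so it lies in $\mathscr{A}_0$ by the feeding principle, while every correction term is disposed of by the inductive hypothesis, stated for arbitrary products of $q$- and $L$-multiplications against the matching power of $\tilde I$ (the base $n=1$ being the direct observation $\nu(L)\subseteq I$ and $\xi_1(I)\subseteq L$).

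The main obstacle is organizing this redistribution so that the induction closes: the inductive hypothesis must be phrased for arbitrary words in $q$- and $L$-multiplications with the exponent of $\tilde I$ equal to the number of $q$-multiplications, and one must verify that every correction term drops its $q$-degree and the exponent of $\tilde I$ simultaneously, and that the final interleaved word really meets the feeding criterion. Conceptually the mechanism is transparent: since $[z,q_i]$ and $[q_i,z]$ lie in $L$ for $z\in I$, each interchange of a $q$-multiplication with an $I$-multiplication costs only an $L$-multiplication, so the $n$ factors of $\tilde I$ provide exactly the $n$ $I$-multiplications needed to force every $\xi_i$ to act on $L$ through $I$ and thus return to $L$. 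Finally, $(\tilde I)^n\mu\subseteq\mathscr{A}_0$ follows by the symmetric argument, now moving $I$-multiplications rightward into the gaps to the right of the $\xi_i$.
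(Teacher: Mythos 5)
Your proposal is correct and takes essentially the same route as the paper's own proof: both reduce $(\tilde I)^{n}$ to monomials built from $I$- and $L$-multiplications, induct on $n$, and use the same commutation relations (Lemma \ref{le:5.5}), the key point in each being that every correction term is an $L$-multiplication (since $[I,q_i],[q_i,I]\subseteq L$) which consumes one $q$-factor and one $I$-factor and is then absorbed by the inductive hypothesis. The only cosmetic differences are that you certify the fully straightened words via your ``feeding'' observation and obtain $(\tilde I)^{n}\mu\subseteq\mathscr{A}_{0}$ by a mirrored argument, whereas the paper establishes $(\tilde I)^{n}\mu\subseteq\mathscr{A}_{0}$ first and then deduces $\mu(\tilde I)^{n}\subseteq\mathscr{A}_{0}$ by showing the commutator difference lies in $\mathscr{A}_{0}$.
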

\begin{proof}
According to Lemma \ref{le:5.2}, we have
\[(\tilde{I})^{n} = \mathscr{A}_{Q}(L)(\mathscr{A}_{Q}(I))^{n} + (\mathscr{A}_{Q}(I))^{n} = (\mathscr{A}_{Q}(I))^{n}\mathscr{A}_{Q}(L) + (\mathscr{A}_{Q}(I))^{n}.\]
Thus it's enough to prove that, for any $x_{1},\cdots,x_{n} \in I$, $y = \eta_{1}\cdots\eta_{n}$ where $\eta_{i} = R_{x_{i}}$ or $L_{x_{i}}$ for each $1 \leq i \leq n$, both $\mu y$ and $y \mu$ belong to $\mathscr{A}_{0}$.

Let's prove the conclusion by induction on $n$. For $n = 1$, we have $\eta_{1}\xi_{1} = \xi_{1}\eta_{1} + \delta_{1}$ where $\delta_{1}$ is of one of the following forms $R_{[x_{1}, q_{1}]}$, $R_{[q_{1}, x_{1}]}$, $L_{[x_{1}, q_{1}]}$, $L_{[q_{1}, x_{1}]}$. Since $[x_{1}, q_{1}], [q_{1}, x_{1}] \in L$ we see that $\delta_{1} \in \mathscr{A}_{0}$. On the other hand, $\xi_{1}\eta_{1}(L) \subseteq \xi_{1}(I) \subseteq L$, and so $\xi_{1}\eta_{1}, \eta_{1}\xi_{1} \in \mathscr{A}_{0}$.

Assume that the result is true for $n - 1$. Now, by Lemma \ref{le:5.5} we have
\begin{align*}\label{eq:*}
&\eta_{1}\cdots\eta_{n}\xi_{1}\cdots\xi_{n} = (\xi_{1}\eta_{1})\eta_{2}\cdots\eta_{n}\xi_{2}\cdots\xi_{n} + \sum_{i = 1}^{n}\eta_{1}\cdots\eta_{i - 1}\delta_{i}\eta_{i + 1}\cdots\eta_{n}\xi_{2}\cdots\xi_{n}\tag{*}
\end{align*}
where $\delta_{i}$ is of one of the following forms $R_{[x_{i}, q_{1}]}$, $R_{[q_{1}, x_{i}]}$, $L_{[x_{i}, q_{1}]}$, $L_{[q_{1}, x_{i}]}$.

The first summand on the right side belongs to $A_{0}$ since $\xi_{1}\eta_{1} \in \mathscr{A}_{0}$ and $\eta_{2}\cdots\eta_{n}\xi_{2}\cdots\xi_{n} \in \mathscr{A}_{0}$ by induction hypothesis. 

On the other hand, for each of the terms $\eta_{1}\cdots\eta_{i - 1}\delta_{i}\eta_{i + 1}\cdots\eta_{n}\xi_{2}\cdots\xi_{n}$ we have that $x_{i} \in I$ and $[x_{i}, q_{1}], [q_{1}, x_{i}] \in L$. Using Lemma \ref{le:5.5}, we may write this as:
\[\delta_{i}\eta_{1}\cdots\eta_{i - 1}\eta_{i + 1}\cdots\eta_{n}\xi_{2}\cdots\xi_{n} + \alpha\eta_{i + 1}\cdots\eta_{n}\xi_{2}\cdots\xi_{n}\]
where $\alpha \in span\{\rho_{1}\cdots\rho_{i - 1}\;|\;\rho_{j} = R_{z_{j}}\;\rm{or}\;\it{L_{z_{j}}}\;\rm{with}\;\it{z_{j}} \in I,\;\rm{1} \leq \it{j} \leq i - \rm{1}\}$. The induction hypothesis applies to show that this belongs to $\mathscr{A}_{0}$ again. Hence, $y\mu \in \mathscr{A}_{0}$.

If we continue to develop in the expression (\ref{eq:*}), we get, for some $\alpha_{0} \in \mathscr{A}_{0}$,
\[\xi_{1}\xi_{2}\eta_{1}\cdots\eta_{n}\xi_{3}\cdots\xi_{n} + \sum_{i = 1}^{n}\xi_{1}\eta_{1}\cdots\eta_{i - 1}\delta_{i}\eta_{i + 1}\cdots\eta_{n}\xi_{3}\cdots\xi_{n} + \alpha_{0}\]
where $\delta_{i}$ is of one of the following forms $R_{[x_{i}, q_{2}]}$, $R_{[q_{2}, x_{i}]}$, $L_{[x_{i}, q_{2}]}$, $L_{[q_{2}, x_{i}]}$.

Using Lemma \ref{le:5.5} we can write each term of the form
\[\xi_{1}\eta_{1}\cdots\eta_{i - 1}\delta_{i}\eta_{i + 1}\cdots\eta_{n}\xi_{3}\cdots\xi_{n}\]
as:
\[\xi_{1}\delta_{i}\eta_{1}\eta_{2}\cdots\eta_{i - 1}\eta_{i + 1}\cdots\eta_{n}\xi_{3}\cdots\xi_{n} + \xi_{1}\alpha\eta_{i + 1}\cdots\eta_{n}\xi_{3}\cdots\xi_{n},\]
where $\alpha \in span\{\rho_{1}\cdots\rho_{i - 1}\;|\;\rho_{j} = R_{z_{j}}\;\rm{or}\;\it{L_{z_{j}}}\;\rm{with}\;\it{z_{j}} \in I,\;\rm{1} \leq \it{j} \leq i - \rm{1}\}$. Notice that
\[\xi_{1}\delta_{i}\eta_{1} = \xi_{1}\eta_{1}\delta_{i} + \xi_{1}\gamma\]
where $\gamma$ is of one of the following forms $R_{[[x_{i}, q_{2}], x_{1}]}$, $R_{[[q_{2}, x_{i}], x_{1}]}$, $R_{[x_{1}, [x_{i}, q_{2}]]}$, $R_{[x_{1},[q_{2}, x_{i}]]}$, $L_{[[x_{i}, q_{2}], x_{1}]}$, $L_{[[q_{2}, x_{i}], x_{1}]}$, $L_{[x_{1}, [x_{i}, q_{2}]]}$, $L_{[x_{1},[q_{2}, x_{i}]]}$.

Hence, using $[q_{2}, x_{i}], [x_{i}, q_{2}] \in L$ and $x_{i} \in I$, we see that the first summand above belongs to $\mathscr{A}_{0}$. For the second summand, assuming that $\alpha = \rho_{1}\cdots\rho_{i - 1}$ where $\rho_{j} = R_{z_{j}}$ or $L_{z_{j}}$ with $z_{j} \in I$ for $1 \leq j \leq i - 1$, we have $(\xi_{1}\rho_{1})\rho_{2}\cdots\rho_{i - 1}\eta_{i + 1}\cdots\eta_{n}\xi_{3}\cdots\xi_{n}$, which is also an element of $\mathscr{A}_{0}$. Continuing in this way, we find that
\[\eta_{1}\cdots\eta_{n}\xi_{1}\cdots\xi_{n} - \xi_{1}\cdots\xi_{n}\eta_{1}\cdots\eta_{n} \in \mathscr{A}_{0}\]
and by what we have just proved, we see that $\xi_{1}\cdots\xi_{n}\eta_{1}\cdots\eta_{n} \in \mathscr{A}_{0}$.

This completes the proof.
\end{proof}
\begin{cor}\label{cor:5.7}
Suppose that $L$ is a subalgebra of $Q$ and $I$ an ideal of $L$. Let $q_{1}, \cdots, q_{n}$ in $Q$ such that $[q_{i}, I] \subseteq L$ and $[I, q_{i}] \subseteq L$ for every $i = 1, \cdots, n$. Then for $\mu = \xi_{1}\cdots\xi_{n}$ in $\mathscr{A}(Q)$ where $\xi_{i} = R_{q_{i}}$ or $L_{q_{i}}$ for each $1 \leq i \leq n$, we have that $\mu\widetilde{I^{n}} \subseteq A_{0}, \widetilde{I^{n}}\mu \subseteq A_{0}$ (where $I^{n}$ denotes the $n$-th power of $I$ in the Leibniz algebra $L$).
\end{cor}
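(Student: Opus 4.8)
The plan is to deduce the statement directly from Lemma \ref{le:5.6} by establishing the single inclusion
\[
\widetilde{I^{n}} \subseteq (\tilde{I})^{n},
\]
where $\widetilde{I^{n}}$ is the ideal of $\mathscr{A}_{Q}(L)$ generated by $\mathscr{A}_{Q}(I^{n})$ and $(\tilde{I})^{n}$ is the $n$-th power of $\tilde{I}$ in $\mathscr{A}_{Q}(L)$. Granting this, Lemma \ref{le:5.6} supplies $\mu(\tilde{I})^{n} + (\tilde{I})^{n}\mu \subseteq \mathscr{A}_{0}$, so that $\mu\widetilde{I^{n}} \subseteq \mu(\tilde{I})^{n} \subseteq \mathscr{A}_{0}$ and $\widetilde{I^{n}}\mu \subseteq (\tilde{I})^{n}\mu \subseteq \mathscr{A}_{0}$, which is precisely the assertion. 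Thus the entire content of the corollary is the comparison of the Leibniz power $I^{n}$ with the associative power $(\tilde{I})^{n}$ at the level of multiplication operators.

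The engine for the inclusion is a pair of operator identities that hold in $\mathscr{A}(Q)$ because $Q$ is symmetric: for all $u, v \in Q$,
\[
R_{[u,v]} = R_{v}R_{u} - R_{u}R_{v}, \qquad L_{[u,v]} = L_{u}L_{v} - L_{v}L_{u},
\]
the first coming from the right Leibniz identity and the second from the left one (both are checked at once by evaluating on an arbitrary argument, exactly as in Lemma \ref{le:5.2}). I then claim, by induction on $n$, that $R_{z}, L_{z} \in (\tilde{I})^{n}$ for every $z \in I^{n}$. The base case $n = 1$ is immediate, since $R_{z}, L_{z} \in \mathscr{A}_{Q}(I) \subseteq \tilde{I}$. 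For the inductive step, a typical generator of $I^{n}$ is a bracket $[u, v]$ with $u \in I^{k}$ and $v \in I^{n-k}$ for some $1 \leq k \leq n-1$; applying the two identities together with the inductive hypotheses $R_{u}, L_{u} \in (\tilde{I})^{k}$ and $R_{v}, L_{v} \in (\tilde{I})^{n-k}$ gives $R_{[u,v]}, L_{[u,v]} \in (\tilde{I})^{k}(\tilde{I})^{n-k} + (\tilde{I})^{n-k}(\tilde{I})^{k} = (\tilde{I})^{n}$, completing the induction. Since $(\tilde{I})^{n}$ is a two-sided ideal of $\mathscr{A}_{Q}(L)$ (a product of ideals), it is in particular a subalgebra, so the associative algebra $\mathscr{A}_{Q}(I^{n})$ generated by the operators $R_{z}, L_{z}$ with $z \in I^{n}$ is contained in $(\tilde{I})^{n}$; and because $(\tilde{I})^{n}$ is two-sided, the ideal $\widetilde{I^{n}}$ generated by $\mathscr{A}_{Q}(I^{n})$ is contained in it as well, yielding $\widetilde{I^{n}} \subseteq (\tilde{I})^{n}$.

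The step I expect to demand the most care is the passage from ``$R_{z}, L_{z} \in (\tilde{I})^{n}$ for the generators $z \in I^{n}$'' to ``$\mathscr{A}_{Q}(I^{n}) \subseteq (\tilde{I})^{n}$'', since it rests on the bookkeeping facts that $(\tilde{I})^{a}(\tilde{I})^{b} = (\tilde{I})^{a+b}$ and that $(\tilde{I})^{a} \subseteq (\tilde{I})^{b}$ whenever $a \geq b$ — the latter following from $(\tilde{I})^{b}$ being a two-sided ideal, and hence valid even though $\mathscr{A}_{Q}(L)$ need not be unital, so that a product of $m$ generators lands in $(\tilde{I})^{nm} \subseteq (\tilde{I})^{n}$. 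One must also fix the recursive meaning of the Leibniz power $I^{n}$ so that the decomposition $I^{n} = \sum_{k}[I^{k}, I^{n-k}]$ used above is legitimate (and so that $I^{n}$ is itself an ideal of $L$, which makes the notation $\widetilde{I^{n}}$ meaningful); with the all-bracketings reading the strong induction applies verbatim, while for a one-sided recursion such as $I^{n} = [I^{n-1}, I]$ only the single case $k = n-1$ is required.
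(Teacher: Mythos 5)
Your proof is correct and follows essentially the same route as the paper's: both reduce the corollary to the single inclusion $\widetilde{I^{n}} \subseteq (\tilde{I})^{n}$, prove it by induction on $n$ using the operator identities $R_{[u,v]} = R_{v}R_{u} - R_{u}R_{v}$ and $L_{[u,v]} = L_{u}L_{v} - L_{v}L_{u}$, and then conclude by Lemma \ref{le:5.6}. The only differences are cosmetic: the paper works with the one-sided recursion $I^{k+1} = [I^{k}, I]$ (your case $k = n-1$) and silently skips the bookkeeping you spell out, namely the passage from the generators $R_{z}, L_{z}$ with $z \in I^{n}$ to the full ideal $\widetilde{I^{n}}$ via the fact that $(\tilde{I})^{n}$ is a two-sided ideal closed under products.
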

\begin{proof}
It's straightforward to show that $I^{n}$ is an ideal of $L$ for each $n \geq 1$.

Claim that $\widetilde{I^{n}} \subseteq (\tilde{I})^{n}$. We'll show it by induction on $n$. When $n = 1$, it's obvious. Suppose that $\widetilde{I^{k}} \subseteq (\tilde{I})^{k}$. When $n = k + 1$, for any $x \in I^{k + 1}$, there exist $y \in I^{k}, z \in I$,
\[R_{x} = R_{[y, z]} = R_{z}R_{y} - R_{y}R_{z} \in \widetilde{I^{k}}\tilde{I} \subseteq (\tilde{I})^{k}\tilde{I} = (\tilde{I})^{k + 1},\]
\[L_{x} = L_{[y, z]} = L_{y}L_{z} - L_{z}L_{y} \in \widetilde{I^{k}}\tilde{I} \subseteq (\tilde{I})^{k}\tilde{I} = (\tilde{I})^{k + 1}.\]
According to Lemma \ref{le:5.6}, we come to the conclusion.
\end{proof}
\begin{lem}\label{le:5.8}
Let $L$ be a semiprime Leibniz algebra. If $I$ is an ideal of $L$ with $\rm{Ann}_{\it{L}}(\it{I}) = \{\rm{0}\}$, then $\rm{Ann}_{\it{L}}(\it{I^{s}}) = \{\rm{0}\}$ for any $s \geq 1$. Any finite intersection of ideals with zero annihilator also have zero annihilator.
\end{lem}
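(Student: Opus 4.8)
The plan is to recast everything in terms of essential ideals and then invoke the dictionary between ``zero annihilator'' and ``essential'', which for a semiprime Leibniz algebra is supplied by Proposition \ref{prop:2.8}. Concretely, for semiprime $L$ and any ideal $J$ of $L$ one has $\mathrm{Ann}_L(J) = \{0\}$ if and only if $J \in \mathscr{J}_e(L)$: the forward direction is Proposition \ref{prop:2.8}(1), and the converse is the equivalence in Proposition \ref{prop:2.8}(2). So both assertions reduce to showing that the relevant ideals are essential, and for that I would freely use the two closure facts recorded just after the Notation, namely that the intersection of two members of $\mathscr{J}_e(L)$ again lies in $\mathscr{J}_e(L)$, and that $J^2 \in \mathscr{J}_e(L)$ whenever $J \in \mathscr{J}_e(L)$ (here using that $L$ is semiprime).

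For the first assertion I would argue by induction on $s$. The case $s = 1$ is the hypothesis. Assume $\mathrm{Ann}_L(I^s) = \{0\}$; by the dictionary $I^s$ is essential, hence so is $(I^s)^2 = [I^s, I^s]$. Since $I^s \subseteq I$ we have $(I^s)^2 = [I^s, I^s] \subseteq [I^s, I] = I^{s+1}$, and $I^{s+1}$ is an ideal of $L$ (as already noted in the proof of Corollary \ref{cor:5.7}). An ideal containing an essential ideal is itself essential, so $I^{s+1} \in \mathscr{J}_e(L)$, and applying Proposition \ref{prop:2.8}(2) once more gives $\mathrm{Ann}_L(I^{s+1}) = \{0\}$. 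Alternatively, one may avoid the intermediate $(I^s)^2$ by testing $I^{s+1}$ directly against an arbitrary nonzero ideal $C$: essentiality of $I^s$ gives $0 \neq I^s \cap C$, semiprimeness gives $[I^s \cap C, I^s \cap C] \neq \{0\}$, and this bracket lies in $I^{s+1} \cap C$ because $I^s \cap C \subseteq I^s \subseteq I$ and $I^s \cap C \subseteq C$.

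For the second assertion it suffices, by an obvious induction on the number of factors, to treat two ideals $I_1, I_2$ with $\mathrm{Ann}_L(I_1) = \mathrm{Ann}_L(I_2) = \{0\}$. By the dictionary both $I_1$ and $I_2$ are essential, hence $I_1 \cap I_2 \in \mathscr{J}_e(L)$ by the stability of $\mathscr{J}_e(L)$ under finite intersection, and then Proposition \ref{prop:2.8}(2) returns $\mathrm{Ann}_L(I_1 \cap I_2) = \{0\}$.

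The only genuinely non-formal point — and hence the step I would be most careful about — is the propagation in the first assertion, namely that the power $I^{s+1} = [I^s, I]$ really contains an essential ideal. This rests on two facts that must be in place before the dictionary can be applied: that $I^{s+1}$ is an ideal of $L$ (so that ``essential'' even makes sense for it), and that $[I^s, I]$ absorbs the nonzero self-brackets produced by semiprimeness from $I^s \cap C$. These are exactly the places where the right-Leibniz identity and the semiprimeness of $L$ are used; everything else is bookkeeping with Proposition \ref{prop:2.8} and the closure of $\mathscr{J}_e(L)$ under intersection and squaring.
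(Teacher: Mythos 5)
Your proposal is correct and takes essentially the same approach as the paper: your ``alternative'' argument in the first assertion (testing $I^{k+1}$ against an arbitrary nonzero ideal $J$ via $\{0\} \neq [I^{k} \cap J, I^{k} \cap J] \subseteq [I^{k}, I] \cap J = I^{k+1} \cap J$) is precisely the paper's induction step, and your primary route via $(I^{s})^{2}$ merely repackages the same computation through the recorded fact that squares of essential ideals are essential in a semiprime algebra. The intersection claim is handled in the paper just as you handle it, through the dictionary of Proposition \ref{prop:2.8} and induction on the number of factors.
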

\begin{proof}
Let's show it by induction on $s$. When $s = 1$, it's obvious. Suppose that $\rm{Ann}_{\it{L}}(\it{I^{k}}) = \{\rm{0}\}$, i.e., $I^{k}$ is essential. When $s = k + 1$, for any nonzero ideal $J$ of $L$, we have $I^{k} \cap J \neq \{0\}$ since $I^{k}$ is essential. So
\[\{0\} \neq [I^{k} \cap J, I^{k} \cap J] \subseteq [I^{k}, I] \cap J = I^{k + 1} \cap J,\]
which implies that $I^{k + 1}$ is essential and so $\rm{Ann}_{\it{L}}(\it{I^{k + 1}}) = \{\rm{0}\}$.

Similarly, we can show that any finite intersection of ideals with zero annihilator also have zero annihilator by induction.
\end{proof}
\begin{prop}\label{prop:5.9}
Suppose that $L$ is a semiprime subalgebra of $Q$. Then the following conditions are equivalent:
\begin{enumerate}[(1)]
\item $Q$ is an algebra of quotients of $L$;
\item For any $\mu \in \mathscr{A}(Q)\setminus\{0\}$, there exists an ideal $I$ of $L$ with $\rm{Ann}_{\it{L}}(\it{I}) = \{\rm{0}\}$ such that $\mu\tilde{I} \subseteq \mathscr{A}_{0}$ and $\{0\} \neq \tilde{I}\mu \subseteq \mathscr{A}_{0}$. For any $0 \neq q \in Q$, we also have $R_{q}\tilde{I}(L) \neq \{0\}$ or $L_{q}\tilde{I}(L) \neq \{0\}$.
\end{enumerate}
\end{prop}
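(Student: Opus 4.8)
The plan is to prove the two implications separately, using the ``localization'' Corollary~\ref{cor:5.7} to move between elements of $\mathscr{A}(Q)$ and ideals of $L$, and combining Lemma~\ref{le:5.4} with Lemma~\ref{le:3.9} to supply the required non-degeneracy.

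For $(1)\Rightarrow(2)$, I would start from a nonzero $\mu\in\mathscr{A}(Q)$, write it as a finite sum of monomials $\xi_{1}\cdots\xi_{n}$ in operators $R_{q},L_{q}$ with $q\in Q$, let $q_{1},\dots,q_{m}$ be the finitely many elements of $Q$ occurring, and let $N$ be the largest monomial length. Since $Q$ is an algebra of quotients, Proposition~\ref{prop:3.8}(1) gives that each $(L:q_{j})$ is essential with $\mathrm{Ann}_{L}((L:q_{j}))=\{0\}$ and $[q_{j},(L:q_{j})],[(L:q_{j}),q_{j}]\subseteq L$. Setting $I_{0}:=\bigcap_{j}(L:q_{j})$ and then $I:=I_{0}^{N}$, Lemma~\ref{le:5.8} yields $\mathrm{Ann}_{L}(I)=\{0\}$, while $[q_{j},I],[I,q_{j}]\subseteq L$ is preserved. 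Because $I=I_{0}^{N}\subseteq I_{0}^{n}$ for every monomial length $n\le N$, Corollary~\ref{cor:5.7} applies to each monomial and, after summing, gives $\mu\tilde{I}\subseteq\mathscr{A}_{0}$ and $\tilde{I}\mu\subseteq\mathscr{A}_{0}$. For $\tilde{I}\mu\neq\{0\}$ I would use $\tilde{I}\supseteq\mathscr{A}_{Q}(I)$ and Lemma~\ref{le:5.4}, which gives $ran_{\mathscr{A}(Q)}(\mathscr{A}_{Q}(I))=\{0\}$, so $\mathscr{A}_{Q}(I)\mu\neq\{0\}$. Finally, for the non-degeneracy clause I would note $I^{2}\subseteq\tilde{I}(L)\subseteq L$ with $\mathrm{Ann}_{L}(I^{2})=\{0\}$ by Lemma~\ref{le:5.8}, whence $\mathrm{Ann}_{Q}(I^{2})=\{0\}$ by Proposition~\ref{prop:3.11}; Lemma~\ref{le:3.9} then forbids any nonzero $q$ from annihilating $I^{2}$ on both sides, so $R_{q}\tilde{I}(L)\neq\{0\}$ or $L_{q}\tilde{I}(L)\neq\{0\}$.

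For $(2)\Rightarrow(1)$, I would first verify that $Q$ is a weak algebra of quotients and only then upgrade to an algebra of quotients. Given $0\neq p$, I apply (2) to whichever of $\mu=R_{p},L_{p}$ is nonzero and intersect the resulting ideals to get a single $I$ with $\mathrm{Ann}_{L}(I)=\{0\}$ and $R_{p}\tilde{I}\subseteq\mathscr{A}_{0}$, $L_{p}\tilde{I}\subseteq\mathscr{A}_{0}$; these memberships say exactly that $[\tilde{I}(L),p]\subseteq L$ and $[p,\tilde{I}(L)]\subseteq L$. The non-degeneracy clause, applied with this $I$, produces $w\in\tilde{I}(L)\subseteq L$ with $0\neq[w,p]\in L$ or $0\neq[p,w]\in L$, so $Q$ is a weak algebra of quotients, and Lemma~\ref{le:3.9} and Proposition~\ref{prop:3.11} become available. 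For arbitrary $p\neq0$ and $q$, I apply (2) to $\mu=R_{q}$ and $\mu=L_{q}$, intersect, and set $J:=\tilde{I}(L)$; a short check shows $J$ is an ideal of $L$ (because $\tilde{I}$ is a two-sided ideal of $\mathscr{A}_{Q}(L)$), that $I^{2}\subseteq J$ forces $\mathrm{Ann}_{L}(J)=\{0\}$, and that $[J,q],[q,J]\subseteq L$. By the maximality in Proposition~\ref{prop:3.1} this gives $J\subseteq(L:q)$. Then Proposition~\ref{prop:3.11} yields $\mathrm{Ann}_{Q}(J)=\{0\}$, and Lemma~\ref{le:3.9} produces $x\in J\subseteq(L:q)$ with $[x,p]\neq0$ or $[p,x]\neq0$, which is precisely the defining condition for $Q$ to be an algebra of quotients of $L$.

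The main obstacle I anticipate is the bookkeeping in $(1)\Rightarrow(2)$: reducing an arbitrary $\mu$ to monomials, controlling all the $q_{j}$ simultaneously through the single ideal $I=I_{0}^{N}$, and checking that the power $N$ interacts correctly with Corollary~\ref{cor:5.7} via $I_{0}^{N}\subseteq I_{0}^{n}$ for the shorter monomials. The other delicate point is the order of argument in $(2)\Rightarrow(1)$: Lemma~\ref{le:3.9} presupposes that $Q$ is already a weak algebra of quotients, so the non-degeneracy clause must be invoked first to establish exactly that, and only afterwards may Lemma~\ref{le:3.9} be used to locate the witness $x$ inside $(L:q)$. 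The two ancillary facts $I^{2}\subseteq\tilde{I}(L)\subseteq I$ and that $\tilde{I}(L)$ is an ideal of $L$ are routine but indispensable glue for both directions.
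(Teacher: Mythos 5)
Your proposal is correct and follows essentially the same route as the paper's own proof: for $(1)\Rightarrow(2)$, the same reduction of $\mu$ to monomials, a common ideal with zero annihilator raised to a high enough power (Proposition~\ref{prop:3.8}, Lemma~\ref{le:5.8}), Corollary~\ref{cor:5.7} for the containments, Lemma~\ref{le:5.4} for $\tilde{I}\mu \neq \{0\}$, and the inclusion $I^{2}\subseteq\tilde{I}(L)$ for the non-degeneracy clause (the paper obtains the witnesses via Proposition~\ref{prop:3.13} where you use Lemma~\ref{le:3.9} and Proposition~\ref{prop:3.11}, a cosmetic difference); and for $(2)\Rightarrow(1)$, the same key ideal $\tilde{I}(L)$ with zero annihilator, which the paper feeds into Theorem~\ref{thm:3.10} while you inline that theorem's argument (weak quotients first, then Lemma~\ref{le:3.9} combined with the maximality in Proposition~\ref{prop:3.1}). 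One caveat, which you inherit from the paper rather than introduce: condition (2) ties the non-degeneracy clause to the ideal produced for one particular $\mu$, so after intersecting the ideals obtained for $R_{q}$ and $L_{q}$, the claim that $R_{q}\tilde{I}(L)\neq\{0\}$ or $L_{q}\tilde{I}(L)\neq\{0\}$ still holds for the smaller intersected $\tilde{I}$ is asserted but not justified, in your write-up exactly as in the paper's.
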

\begin{proof}
(2) $\Rightarrow$ (1) Let $q \in Q\setminus\{0\}$. Let $\tilde{I}$ be as in (2), so it satisfies $R_{q}\tilde{I}(L) \neq \{0\}$ or $L_{q}\tilde{I}(L) \neq \{0\}$ and both $R_{q}\tilde{I}$ and $L_{q}\tilde{I}$ contain in $\mathscr{A}_{0}$. Set
\[I_{0} := span\{\alpha(x)\;|\;x \in L\; and\; \alpha \in \tilde{I}\}.\]
Then $I_{0}$ is an ideal of $L$ such that $\rm{Ann}_{\it{L}}(\it{I_{\rm{0}}}) = \{\rm{0}\}$. Indeed, if $x, y \in L$ and $\alpha \in \tilde{I}$, we have
\[[y, \alpha(x)] = L_{y}\alpha(x)\;\rm{and}\;\it{L_{y}}\alpha \in \tilde{I},\quad[\alpha(x), y] = R_{y}\alpha(x)\;\rm{and}\;\it{R_{y}}\alpha \in \tilde{I}.\]
According to Proposition \ref{prop:2.8}, we know that for a semiprime Leibniz algebra $L$, $\rm{Ann}_{\it{L}}(\it{I}) = \{\rm{0}\} \Leftrightarrow \it{ran_{L}(I)} = \{\rm{0}\}$ for any ideal $I$ of $L$. If now $[I_{0}, x] = \{0\}$ for $x \in L$, then $R_{x}\tilde{I}(L) = \{0\}$. In particular, for $y, z \in I$, we have $R_{x}R_{y}(z) = \rm{0}$ and so $x \in ran_{L}(I^{2})$, which is zero by Lemma \ref{le:5.8}. So $ran_{L}(I_{0}) = \{0\}$, consequently $\rm{Ann}_{\it{L}}(\it{I_{\rm{0}}}) = \{\rm{0}\}$.

Finally, $[q, I_{0}] = L_{q}\tilde{I}(L) \neq \{0\}$ or $[I_{0}, q] = R_{q}\tilde{I}(L) \neq \{0\}$, and both $[q, I_{0}]$ and $[I_{0}, q]$ contain in $L$, which implies that $Q$ is ideally absorbed into $L$. According to Theorem \ref{thm:3.10}, $Q$ is an algebra of quotients of $L$.

(1) $\Rightarrow$ (2) Let $\mu = \sum_{i \geq 1}\xi_{i, 1}\cdots\xi_{i, r_{i}} \in \mathscr{A}(Q)\setminus\{\rm{0}\}$ where $\xi_{i, j} = R_{q_{i, j}}$ or $L_{q_{i, j}}$ for $1 \leq j \leq r_{i}$ and $q_{i, j} \in Q$. We may of course assume that all $q_{i, j}$ are nonzero elements in $Q$. Set $s = \sum_{i \geq 1}r_{i}$. As $Q$ is an algebra of quotients of $L$, there exists, for every $i$ and $j$, an ideal $J_{i, j}$ of $L$ such that $\rm{Ann}_{\it{L}}(\it{J_{i, j}}) = \{\rm{0}\}$, and $[q_{i, j}, J_{i, j}] \subseteq L, [J_{i, j}, q_{i, j}] \subseteq L$. By Lemma \ref{le:5.8}, the ideal $J = \cap_{i, j}J_{i, j}$ and hence also $I = J^{s}$ have zero annihilator in $L$. Moreover, $[q_{i, j}, J] \subseteq [q_{i, j}, J_{i, j}] \subseteq L$ and $[J, q_{i, j}] \subseteq [J_{i, j}, q_{i, j}] \subseteq L$. According to Corollary \ref{cor:5.7}, we have $\mu\tilde{I} \subseteq \mathscr{A}_{0}$ and $\tilde{I}\mu \subseteq \mathscr{A}_{0}$.

If $\tilde{I}\mu = \{0\}$, then $\mu \in ran_{\mathscr{A}(Q)}(\mathscr{A}_{Q}(I))$ which is zero by Lemma \ref{le:5.4}.

Finally, for any $0 \neq q \in Q$, apply that $Q$ is an algebra of quotients of $I^{2}$ to find $y, z \in I$ such that $[q, [y, z]] \neq 0$ or $u, v \in I$ such that $[[u, v], q] \neq 0$. And $[q, [y, z]] = L_{q}L_{y}(z) \in L_{q}\tilde{I}(L)$(resp. $[[u, v], q] = R_{q}L_{u}(v) \in R_{q}\tilde{I}(L)$).
\end{proof}
Recall that an associative algebra $S$ is a left quotient algebra of a subalgebra $A$ if whenever $p$ and $q \in S$, with $p \neq 0$, there exists $x$ in $A$ such that $xp \neq 0$ and $xq \in A$. An associative algebra $A$ has a left quotient algebra if and only if it has no total right zero divisors different from zero. (Here, an element $x$ in $A$ is a total right zero divisor if $Ax = \{0\}$.)
\begin{lem}\label{le:5.10}\cite{S1}
Let $A$ be a subalgebra of an associative algebra $Q$. Then $Q$ is a left quotient algebra of $L$ if and only if for every nonzero element $q \in Q$ there exists a left ideal $I$ of $A$ with $ran_{A}(I) = \{0\}$ such that $\{0\} \neq Iq \subseteq A$.
\end{lem}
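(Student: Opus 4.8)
The statement is an equivalence, so the plan is to handle the two implications separately: the forward direction is a routine verification built on the canonical denominator ideal, while the genuine content lies in the converse, where a single denominator ideal does not suffice and a two-step absorption is required.

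For the direct implication, I would assume $Q$ is a left quotient algebra of $A$ and fix $0 \neq q \in Q$. The plan is to take the canonical denominator set $I := \{x \in A \mid xq \in A\}$ and check the three required features. It is a left ideal, since for $a \in A$ and $x \in I$ one has $(ax)q = a(xq) \in A$, so $ax \in I$. Applying the defining property to the pair $(q,q)$ produces $x \in A$ with $xq \neq 0$ and $xq \in A$, whence $x \in I$ and $Iq \neq \{0\}$. To see $ran_{A}(I) = \{0\}$, I would take a putative $0 \neq a \in A$ with $Ia = \{0\}$ and apply the defining property to the pair $(a,q)$: this yields $x \in A$ with $xa \neq 0$ and $xq \in A$, forcing $x \in I$ and hence $xa \in Ia = \{0\}$, a contradiction.

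For the converse I would assume the ideal condition and fix $p, q \in Q$ with $p \neq 0$, aiming to produce $x \in A$ with $xp \neq 0$ and $xq \in A$. First I would absorb $p$: the hypothesis applied to $p$ gives a left ideal $J$ of $A$ with $ran_{A}(J) = \{0\}$ and $\{0\} \neq Jp \subseteq A$, so I can choose $y \in J$ with $c := yp \neq 0$ and $c \in A$. If $yq = 0$, then $x := y$ already works, because then $xq = yq = 0 \in A$; note this also covers the degenerate case $q = 0$. Otherwise $yq \neq 0$, and I would absorb the single element $yq$: the hypothesis applied to $yq$ gives a left ideal $K$ of $A$ with $ran_{A}(K) = \{0\}$ and $\{0\} \neq K(yq) \subseteq A$. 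Since $ran_{A}(K) = \{0\}$ and $0 \neq c \in A$, we get $Kc \neq \{0\}$, so there is $k \in K$ with $kc \neq 0$. Setting $x := ky \in A$ then finishes it: $xp = k(yp) = kc \neq 0$, while $xq = k(yq) \in K(yq) \subseteq A$.

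The main obstacle is precisely this converse. The tempting shortcut—take the denominator ideal $I$ of $q$ and hunt for $x \in I$ with $xp \neq 0$—fails, because nothing prevents $Ip = \{0\}$ for a nonzero $p$ whose own denominator ideal is unrelated to $I$; the condition $ran_{A}(I) = \{0\}$ controls annihilators only inside $A$, not arbitrary elements of $Q$. The device that rescues the argument is to absorb $p$ first, producing the nonzero element $c = yp$ that already lives in $A$, and then to absorb the one element $yq$, using $ran_{A}(K) = \{0\}$ against this $c$ to guarantee that the chosen $k$ does not annihilate $p$. The only care needed is in the degenerate branches $q = 0$ and $yq = 0$, both dispatched immediately by taking $x = y$.
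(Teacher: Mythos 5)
Your proof is correct. The paper gives no proof of this lemma at all---it quotes it from \cite{S1}---and your argument (the denominator ideal $I=\{x\in A\mid xq\in A\}$ for the forward direction; for the converse, absorbing $p$ first to get $0\neq c=yp\in A$ and then absorbing $yq$ so that $ran_{A}(K)=\{0\}$ can be played against $c$) is essentially the standard proof from that reference, so it supplies exactly what the paper omits, including the correct handling of the degenerate cases $q=0$ and $yq=0$.
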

\begin{thm}\label{thm:5.11}
Suppose that $L$ is a semiprime subalgebra of $Q$. Moreover, suppose that $Q$ is an algebra of quotients of $L$. Then $\mathscr{A}(Q)$ is a left quotient algebra of $\mathscr{A}_{0}$.
\end{thm}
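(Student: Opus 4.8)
The plan is to deduce the theorem from the associative criterion of Lemma~\ref{le:5.10}, applied with the ambient associative algebra $\mathscr{A}(Q)$ and its subalgebra $\mathscr{A}_{0}$ (recall $\mathscr{A}_{Q}(L) \subseteq \mathscr{A}_{0} \subseteq \mathscr{A}(Q)$). By that lemma it is enough to show that for every nonzero $\mu \in \mathscr{A}(Q)$ there is a left ideal $\mathscr{I}$ of $\mathscr{A}_{0}$ with $ran_{\mathscr{A}_{0}}(\mathscr{I}) = \{0\}$ and $\{0\} \neq \mathscr{I}\mu \subseteq \mathscr{A}_{0}$. So I would fix such a $\mu$ and construct the required $\mathscr{I}$.

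First I would feed $\mu$ into Proposition~\ref{prop:5.9}: since $L$ is semiprime and $Q$ is an algebra of quotients of $L$, the implication (1)~$\Rightarrow$~(2) produces an ideal $I$ of $L$ with $\mathrm{Ann}_{L}(I) = \{0\}$ such that $\mu\tilde{I} \subseteq \mathscr{A}_{0}$ and $\{0\} \neq \tilde{I}\mu \subseteq \mathscr{A}_{0}$, where $\tilde{I}$ is the ideal of $\mathscr{A}_{Q}(L)$ generated by $\mathscr{A}_{Q}(I)$. Because $\mathscr{A}_{Q}(L) \subseteq \mathscr{A}_{0}$ we have $\tilde{I} \subseteq \mathscr{A}_{0}$, but $\tilde{I}$ need only be an ideal of $\mathscr{A}_{Q}(L)$, not a left ideal of the possibly larger $\mathscr{A}_{0}$. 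I would therefore set $\mathscr{I} := \mathscr{A}_{0}\tilde{I} + \tilde{I}$, the left ideal of $\mathscr{A}_{0}$ generated by $\tilde{I}$, which lies inside $\mathscr{A}_{0}$. The absorption property then follows from $\mathscr{I}\mu = \mathscr{A}_{0}(\tilde{I}\mu) + \tilde{I}\mu \subseteq \mathscr{A}_{0}\mathscr{A}_{0} + \mathscr{A}_{0} = \mathscr{A}_{0}$ (using $\tilde{I}\mu \subseteq \mathscr{A}_{0}$ together with the fact that $\mathscr{A}_{0}$ is a subalgebra), while $\mathscr{I}\mu \supseteq \tilde{I}\mu \neq \{0\}$ gives $\mathscr{I}\mu \neq \{0\}$.

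It remains to check $ran_{\mathscr{A}_{0}}(\mathscr{I}) = \{0\}$. If $\nu \in \mathscr{A}_{0}$ satisfies $\mathscr{I}\nu = \{0\}$, then in particular $\mathscr{A}_{Q}(I)\nu = \{0\}$, because $\mathscr{A}_{Q}(I) \subseteq \tilde{I} \subseteq \mathscr{I}$; viewing $\nu$ inside $\mathscr{A}(Q)$, this says $\nu \in ran_{\mathscr{A}(Q)}(\mathscr{A}_{Q}(I))$. Since $\mathrm{Ann}_{L}(I) = \{0\}$ and $Q$ is a weak algebra of quotients of $L$, Lemma~\ref{le:5.4} forces $ran_{\mathscr{A}(Q)}(\mathscr{A}_{Q}(I)) = \{0\}$, hence $\nu = 0$. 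Applying Lemma~\ref{le:5.10} then yields the theorem. I expect the genuine difficulty to be already discharged by the earlier machinery: Lemma~\ref{le:5.6}, Corollary~\ref{cor:5.7} and Proposition~\ref{prop:5.9} are precisely what establish the delicate fact that $\mu$ carries $\tilde{I}$ into $\mathscr{A}_{0}$ on both sides. Granting those, the only care needed here is the bookkeeping of passing from the ideal $\tilde{I}$ of $\mathscr{A}_{Q}(L)$ to a left ideal of $\mathscr{A}_{0}$ without destroying either the absorption $\mathscr{I}\mu \subseteq \mathscr{A}_{0}$ or the vanishing of the right annihilator.
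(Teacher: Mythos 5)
Your proposal is correct and follows essentially the same route as the paper's proof: both invoke Proposition~\ref{prop:5.9} (1)~$\Rightarrow$~(2) to obtain the ideal $I$, form the same left ideal $\mathscr{A}_{0}\tilde{I} + \tilde{I}$ of $\mathscr{A}_{0}$, use Lemma~\ref{le:5.4} (via $\mathscr{A}_{Q}(I) \subseteq \tilde{I}$) to kill the right annihilator, and conclude with Lemma~\ref{le:5.10}. Your verification of $\mathscr{I}\mu \subseteq \mathscr{A}_{0}$ and of the annihilator condition is just a slightly more explicit write-up of the same containment chain the paper states in passing.
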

\begin{proof}
Let $\mu \in \mathscr{A}(Q)\setminus\{0\}$ and let $I$ be an ideal of $L$ satisfying condition (2) in Proposition \ref{prop:5.9}. Set $J = \mathscr{A}_{0}\tilde{I} + \tilde{I}$, a left ideal of $\mathscr{A}_{0}$ that satisfies $\{0\} \neq J\mu \subseteq \mathscr{A}_{0}$ (because $\{0\} \neq \tilde{I}\mu \subseteq \mathscr{A}_{0}$).

Since also $\rm{Ann}_{\it{L}}(\it{I}) = \{\rm{0}\}$, from Lemma \ref{le:5.4} we obtain that $ran_{\mathscr{A}(Q)}(\mathscr{A}_{Q}(I)) = \{0\}$. This, together with the fact that $\tilde{I}$ contains $\mathscr{A}_{Q}(I)$, implies that $ran_{\mathscr{A}(Q)}(\tilde{I}) = \{0\}$. Since $ran_{\mathscr{A}_{0}}(\tilde{I}) \subseteq ran_{\mathscr{A}(Q)}(\tilde{I})$ we get that $ran_{\mathscr{A}_{0}}(\tilde{I}) = \{0\}$. Hence we get $ran_{\mathscr{A}_{0}}(J) = \{0\}$ since $\tilde{I} \subseteq J$. This concludes the proof according to Lemma \ref{le:5.10}.
\end{proof}
\begin{re}\label{re:5.12}
As established in the proof of the previous result, if $L$ is a subalgebra of $Q$ and if $L$ is semiprime and $I$ is an ideal of $L$ with $\rm{Ann}_{\it{L}}(\it{I}) = \{\rm{0}\}$, then $\mathscr{A}_{0}\tilde{I} + \tilde{I}$ is a left ideal of $\mathscr{A}_{0}$ with zero right annihilator.
\end{re}
\section{Algebras of quotients of Leibniz algebras with dense extensions}\label{se:6}
In this section, we mainly study algebras of quotients of Leibniz algebras via their dense extensions which is introduced by Cabrera in \cite{C1}. We show that dense extension can be lifted from a Leibniz algebra to its essential ideals if the extension is also an algebra of quotients. What's more, we get a conclusion, a converse to Theorem \ref{thm:5.11}, via dense extension of Leibniz algebras. Specifically, for any algebra $L$ (not necessary associative), let $M(L)$ be the associative algebra generated by the identity map together with the operators given by right and left multiplication by elements of $L$. In the case of a Leibniz algebra $L$, note that $M(L)$ is nothing but the unitization of $\mathscr{A}(L)$.

Following \cite{C1}, given an extension of (not necessarily associative) algebra $L \subseteq Q$, the annihilator of $L$ in $M(Q)$ is defined by
\[L^{ann} := \{\mu \in M(Q)\; |\; \mu(x) = 0,\; x \in L\}.\]
If $L^{ann} = \{0\}$, then $L$ is a dense subalgebra of $Q$, and $L \subseteq Q$ is a dense extension of algebras in this sense.
\begin{lem}\label{le:6.1}
Suppose that $L$ is a subalgebra of the Leibniz algebra $Q$ with $\rm{Ann}(\it{Q}) = \{\rm{0}\}$. Then the following conditions are equivalent:
\begin{enumerate}[(1)]
\item $L$ is a dense subalgebra of $Q$.
\item If $\mu(L) = \{0\}$ for some $\mu$ in $\mathscr{A}(Q)$, then $\mu = 0$.
\end{enumerate}
\end{lem}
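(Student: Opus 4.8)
The plan is to treat the two implications asymmetrically, since one is immediate from the definitions while the other carries all the content. Throughout I would lean on the structural fact that $M(Q)$ is the unitization of $\mathscr{A}(Q)$, so that $M(Q) = \mathbb{F}\cdot\mathrm{id} + \mathscr{A}(Q)$ and $\mathscr{A}(Q)$ is a two-sided ideal of $M(Q)$. In particular $R_{q}\mu$ and $L_{q}\mu$ belong to $\mathscr{A}(Q)$ whenever $q \in Q$ and $\mu \in M(Q)$, because $R_{q}, L_{q} \in \mathscr{A}(Q)$ and $\mathscr{A}(Q)$ is a right ideal of $M(Q)$. For $(1)\Rightarrow(2)$ there is essentially nothing to prove: if $\mu \in \mathscr{A}(Q)$ satisfies $\mu(L) = \{0\}$, then, viewing $\mathscr{A}(Q) \subseteq M(Q)$, we have $\mu \in L^{ann}$; density of $L$ says $L^{ann} = \{0\}$, whence $\mu = 0$.

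The substance lies in $(2)\Rightarrow(1)$. I would take an arbitrary $\mu \in L^{ann}$ and aim to show that $\mu$ is the zero operator on $Q$, i.e. $\mu(p) = 0$ for every $p \in Q$; this gives $L^{ann} = \{0\}$, which is density. The device is to post-compose $\mu$ with a single multiplication operator. Fix $q \in Q$. Then $R_{q}\mu \in \mathscr{A}(Q)$, and for $x \in L$ we have $(R_{q}\mu)(x) = R_{q}(\mu(x)) = 0$ since $\mu(x) = 0$; thus $R_{q}\mu$ is an element of $\mathscr{A}(Q)$ annihilating $L$, and condition $(2)$ forces $R_{q}\mu = 0$. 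The identical argument with $L_{q}$ in place of $R_{q}$ gives $L_{q}\mu = 0$. Evaluating these vanishing operators at an arbitrary $p \in Q$ yields $[\mu(p), q] = (R_{q}\mu)(p) = 0$ and $[q, \mu(p)] = (L_{q}\mu)(p) = 0$ for all $q \in Q$. Hence $\mu(p) \in lan(Q) \cap ran(Q) = \mathrm{Ann}(Q)$, which is $\{0\}$ by hypothesis, so $\mu(p) = 0$. As $p$ was arbitrary, $\mu = 0$.

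The one genuine obstacle is the passage between $M(Q)$ and $\mathscr{A}(Q)$. Condition $(2)$ is a statement about elements of $\mathscr{A}(Q)$, but an element $\mu \in L^{ann}$ in general carries a nonzero scalar (identity) component and therefore need not lie in $\mathscr{A}(Q)$, so $(2)$ cannot be applied to $\mu$ itself. Multiplying by one operator $R_{q}$ or $L_{q}$ is precisely what clears this scalar obstruction, landing the product inside the ideal $\mathscr{A}(Q)$ where $(2)$ becomes available; the hypothesis $\mathrm{Ann}(Q) = \{0\}$ then upgrades ``vanishing after one multiplication'' to outright vanishing. No deeper use of the Leibniz or symmetry axioms is required beyond the facts that $R_{q}, L_{q}$ generate $\mathscr{A}(Q)$ and that $\mathrm{Ann}(Q) = lan(Q) \cap ran(Q)$.
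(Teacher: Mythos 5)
Your proposal is correct and follows essentially the same route as the paper: both proofs hinge on the fact that $\mathscr{A}(Q)$ is a (two-sided) ideal of $M(Q)$, so that $R_{q}\mu$ and $L_{q}\mu$ lie in $\mathscr{A}(Q)$ and annihilate $L$, whence condition (2) forces them to vanish and $\mathrm{Ann}(Q)=\{0\}$ gives $\mu(p)=0$. The only difference is presentational—you argue directly for every $p\in Q$, while the paper assumes $\mu(p)\neq 0$ and derives a contradiction—which is the same argument in contrapositive form.
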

\begin{proof}
Clearly, (1) implies (2). Conversely, suppose that $\mu \in M(Q)$ satisfies $\mu(L) = \{0\}$. If $\mu(p) \neq 0$ for some $p$ in $Q$, then use $\rm{Ann}(\it{Q}) = \{\rm{0}\}$ to find a nonzero element $q$ in $Q$ satisfying $R_{q}\mu(p) \neq 0$ or $0 \neq s \in Q$ such that $L_{s}\mu(p) \neq 0$. But then $R_{q}\mu(L) = L_{s}\mu(L) = \{0\}$ and since $\mathscr{A}(Q)$ is a two-sided ideal of $M(Q)$, we have that $R_{q}\mu, L_{s}\mu \in \mathscr{A}(Q)$. Hence, condition (2) yields $R_{q}\mu = L_{s}\mu = 0$, a contradiction.
\end{proof}
\begin{lem}\label{le:6.2}\cite{C1}
Suppose that $L$ is a dense subalgebra of $Q$ and $I$ an ideal of $L$. If $\mu$ is an element of $M(Q)$ such that $\mu(I) = \{0\}$, then $\mu(M(Q)(I)) = \{0\}$.
\end{lem}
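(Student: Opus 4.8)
The plan is to recast the conclusion in terms of the annihilator of $I$ in $M(Q)$, in exact analogy with the definition of $L^{ann}$ above: put $I^{ann} := \{\lambda \in M(Q) \mid \lambda(I) = \{0\}\}$. This set is automatically a \emph{left} ideal of $M(Q)$, since $\sigma\lambda(I) = \sigma(\lambda(I)) = \{0\}$ whenever $\lambda \in I^{ann}$ and $\sigma \in M(Q)$. The observation driving everything is that $\mu(M(Q)(I)) = \{0\}$ is equivalent to $(\mu\theta)(I) = \{0\}$ for every $\theta \in M(Q)$, that is, to $\mu M(Q) \subseteq I^{ann}$. Since $\mu \in I^{ann}$ by hypothesis, it suffices to show that $I^{ann}$ is in fact a two-sided ideal of $M(Q)$; and because $M(Q)$ is generated as an algebra by the identity together with $\{R_q, L_q \mid q \in Q\}$, I only need to verify that $\lambda R_q, \lambda L_q \in I^{ann}$ whenever $\lambda \in I^{ann}$ and $q \in Q$.

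The heart of the argument is the claim that, for each $\lambda \in I^{ann}$ and each $y \in I$, the operators $\lambda L_y$ and $\lambda R_y$ vanish identically on $Q$. This is precisely where density enters. Both $\lambda L_y$ and $\lambda R_y$ lie in $M(Q)$, so by the hypothesis $L^{ann} = \{0\}$ it is enough to show that they kill $L$. For any $x \in L$ we have $(\lambda L_y)(x) = \lambda([y, x])$ and $(\lambda R_y)(x) = \lambda([x, y])$; since $I$ is an ideal of $L$, both $[y, x]$ and $[x, y]$ belong to $I$, and hence both are annihilated by $\lambda$. Thus $\lambda L_y(L) = \lambda R_y(L) = \{0\}$, and density forces $\lambda L_y = \lambda R_y = 0$.

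The right-ideal property then drops out of the definitions $[y, q] = L_y(q)$ and $[q, y] = R_y(q)$: for $y \in I$ and $q \in Q$ one gets $(\lambda R_q)(y) = \lambda([y, q]) = (\lambda L_y)(q) = 0$ and $(\lambda L_q)(y) = \lambda([q, y]) = (\lambda R_y)(q) = 0$, so indeed $\lambda R_q, \lambda L_q \in I^{ann}$. Hence $I^{ann}$ is two-sided, $\mu M(Q) \subseteq I^{ann}$, and therefore $\mu(M(Q)(I)) = \{0\}$. I anticipate that the only genuine subtlety is the opening reformulation, namely spotting that the target statement is exactly the assertion that the left ideal $I^{ann}$ absorbs multiplication on the right; once that is in place, the reduction to $\lambda L_y = \lambda R_y = 0$ and the single application of the density hypothesis are routine. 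Notably, neither semiprimeness nor any power-filter condition is needed here, only that $I$ be an ideal of $L$ and that $L$ be dense in $Q$.
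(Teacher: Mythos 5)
The paper offers no proof of this lemma at all --- it is quoted from \cite{C1} --- so there is no in-paper argument to measure yours against; what you have written is correct, self-contained, and is essentially the standard argument behind the cited result, transported to the Leibniz setting with its two families of multiplication operators. Both key points are in place: the reformulation that $\mu(M(Q)(I)) = \{0\}$ says exactly that the left ideal $I^{ann}$ absorbs right multiplication by $M(Q)$, which by generation reduces to checking the generators $R_q, L_q$; and the density step, where for $\lambda \in I^{ann}$ and $y \in I$ the operators $\lambda L_y, \lambda R_y \in M(Q)$ annihilate $L$ (because $[y,L]$ and $[L,y]$ lie in the two-sided ideal $I$) and are therefore zero, after which the identities $[y,q] = L_y(q)$ and $[q,y] = R_y(q)$ give $\lambda R_q, \lambda L_q \in I^{ann}$. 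Your closing remark is also accurate: the argument uses only that $I$ is a two-sided ideal of $L$ and that $L^{ann} = \{0\}$, with no semiprimeness or power-filter hypothesis, which is consistent with how the lemma is later applied (e.g.\ in Lemma \ref{le:6.3}, where $I_{0}$ is merely an ideal of $L$).
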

Following Cabrera and Mohammed in \cite{CM}, we say that a Leibiniz algebra $L$ is multiplicatively semiprime whenever $L$ and its multiplication algebra $M(L)$ are semiprime. Observe that in this situation, and if $L$ is a Leibniz algebra, then $\mathscr{A}(L)$, being an ideal of $M(L)$, will also be a semiprime algebra.
\begin{lem}\label{le:6.3}
Let $L \subseteq Q$ be a dense extension of Leibniz algebras. If $Q$ is multiplicatively semiprime, then $\mathscr{A}_{0}$ is semiprime.
\end{lem}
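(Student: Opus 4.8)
The plan is to prove semiprimeness of $\mathscr{A}_0$ through the standard characterization that an associative algebra is semiprime if and only if it has no nonzero ideal of square zero, transferring the property down from the larger algebra $\mathscr{A}(Q)$ with the help of the density of $L$ in $Q$.

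First I would collect the ambient facts. Since $Q$ is multiplicatively semiprime, $M(Q)$ is semiprime, and because $\mathscr{A}(Q)$ is an ideal of $M(Q)$ it is semiprime as well. Moreover $Q$ is semiprime, so $\mathrm{Ann}(Q) = \{0\}$ by Proposition \ref{prop:2.6}; in particular the density of $L$ will be applied in the form $L^{ann} = \{0\}$, i.e. any $\mu \in M(Q)$ with $\mu(L) = \{0\}$ vanishes. Now let $N$ be an ideal of $\mathscr{A}_0$ with $N^{2} = \{0\}$; it suffices to prove $N = \{0\}$.

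Next I would manufacture an ideal of $L$ out of $N$. Put $V := N(L) = \mathrm{span}\{\mu(x) \mid \mu \in N,\ x \in L\}$, which is contained in $L$ since $N \subseteq \mathscr{A}_0$. For $y \in L$ and $\mu \in N$ we have $[\mu(x), y] = (R_y \mu)(x)$ and $[y, \mu(x)] = (L_y \mu)(x)$, where $R_y, L_y \in \mathscr{A}_Q(L) \subseteq \mathscr{A}_0$ force $R_y \mu, L_y \mu \in N$; hence $V$ is an ideal of $L$. Furthermore, for every $\mu \in N$ we get $\mu(V) = \mu(N(L)) \subseteq N^{2}(L) = \{0\}$, so each element of $N$ annihilates $V$.

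Then comes the bridging step, which I expect to be the main obstacle, since $N$ is an ideal only of $\mathscr{A}_0$ and so the relation $\mu \mathscr{A}_0 \mu = \{0\}$ does not by itself see the semiprimeness of the bigger algebra $\mathscr{A}(Q)$; it is the density of $L$ that repairs this. Because $V$ is an ideal of $L$ annihilated by $\mu \in N \subseteq M(Q)$ and $L \subseteq Q$ is dense, Lemma \ref{le:6.2} gives $\mu(M(Q)(V)) = \{0\}$. Writing $W := M(Q)(V)$, an ideal of $Q$, I would fix $\mu \in N$ and an arbitrary $\nu \in \mathscr{A}(Q)$: since $\mu(L) \subseteq V \subseteq W$ and $W$ is an ideal of $Q$ (so $\nu(W) \subseteq W$), one obtains $\mu\nu\mu(L) = \mu(\nu(\mu(L))) \subseteq \mu(\nu(W)) \subseteq \mu(W) = \{0\}$. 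As $\mu\nu\mu \in M(Q)$ kills $L$, density yields $\mu\nu\mu = 0$, and therefore $\mu \mathscr{A}(Q) \mu = \{0\}$. The semiprimeness of $\mathscr{A}(Q)$ then forces $\mu = 0$ for every $\mu \in N$, whence $N = \{0\}$ and $\mathscr{A}_0$ is semiprime. The delicate point is exactly the inflation of the annihilated $L$-ideal $V$ into the annihilated $Q$-ideal $W$, which is what upgrades $\mu \mathscr{A}_0 \mu = \{0\}$ to $\mu \mathscr{A}(Q) \mu = \{0\}$ and allows the semiprimeness of $\mathscr{A}(Q)$ to be brought to bear.
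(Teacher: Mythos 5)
Your proof is correct and follows essentially the same route as the paper's: both take a square-zero ideal of $\mathscr{A}_{0}$, form the $L$-ideal $N(L)$, note each $\mu \in N$ kills it, apply Lemma \ref{le:6.2} to inflate this to $\mu(M(Q)(N(L))) = \{0\}$, and then use density of $L$ in $Q$ plus semiprimeness to force $\mu = 0$. The only cosmetic difference is that the paper concludes from $\mu M(Q)\mu = \{0\}$ and the semiprimeness of $M(Q)$, while you pass to $\mu\mathscr{A}(Q)\mu = \{0\}$ and use the semiprimeness that $\mathscr{A}(Q)$ inherits as an ideal of $M(Q)$.
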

\begin{proof}
Let $I$ be an ideal of $\mathscr{A}_{0}$ with $I^{2} = \{0\}$. As in the proof of Proposition \ref{prop:5.9}, let $I_{0} = \{\alpha(x)\;\mid\;\alpha \in I, x \in L\}$, which is clearly an ideal of $L$. For any $\mu \in I$, we evidently have $\mu(I_{0}) = \{0\}$ since $I^{2} = \{0\}$. It then follows Lemma \ref{le:6.2} that $\mu(M(Q)(I_{0})) = \{0\}$. This implies that $\mu M(Q)\mu(L) = \{0\}$, and thus $\mu M(Q)\mu = \{0\}$ since $L$ is dense in $Q$. But $M(Q)$ is semiprime by hypothesis, so $\mu = 0$ and since $\mu$ is an arbitrary element in $I$, we get that $I = \{0\}$, that is, $\mathscr{A}_{0}$ is semiprime.
\end{proof}
\begin{lem}\label{le:6.4}\cite{C1}
Suppose that $L$ is a dense subalgebra of $Q$. If $Q$ is multiplicatively semiprime(resp. multiplicatively prime), then $L$ is also multiplicatively semiprime(resp. multiplicatively prime).
\end{lem}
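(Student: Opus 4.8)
The plan is to split the statement into two independent assertions --- that the Leibniz algebra $L$ is semiprime (resp.\ prime), and that its multiplication algebra $M(L)$ is semiprime (resp.\ prime) --- since $Q$ being multiplicatively semiprime (resp.\ prime) means precisely that both $Q$ and $M(Q)$ carry the corresponding property. Each assertion I would prove by contradiction, inflating a hypothetical defect of $L$ (resp.\ of $M(L)$) to one of $Q$ (resp.\ of $M(Q)$) by means of Lemma \ref{le:6.2} together with the density hypothesis $L^{ann}=\{0\}$.

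For the first assertion, consider the semiprime case and suppose $I$ is a nonzero ideal of $L$ with $[I,I]=\{0\}$. For each $x\in I$ the operators $R_{x}$ and $L_{x}$ kill $I$, since $[I,x],[x,I]\subseteq[I,I]=\{0\}$, so Lemma \ref{le:6.2} upgrades this to $R_{x}(M(Q)(I))=L_{x}(M(Q)(I))=\{0\}$. Writing $\tilde{I}:=M(Q)(I)$, a nonzero ideal of $Q$ containing $I$, this says exactly that $I\subseteq\mathrm{Ann}_{Q}(\tilde{I})$; hence $I\subseteq\tilde{I}\cap\mathrm{Ann}_{Q}(\tilde{I})$ is a nonzero ideal of $Q$ whose square lies in $[\mathrm{Ann}_{Q}(\tilde{I}),\tilde{I}]=\{0\}$, contradicting the semiprimeness of $Q$. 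In the prime case I would take nonzero ideals $I,J$ of $L$ with $[I,J]=\{0\}$: applying Lemma \ref{le:6.2} first to the operators $R_{v}$ ($v\in J$) and then to the operators $L_{w}$ ($w\in M(Q)(I)$) inflates $I$ and $J$ to nonzero ideals $M(Q)(I),M(Q)(J)$ of $Q$ with $[M(Q)(I),M(Q)(J)]=\{0\}$, contradicting the primeness of $Q$.

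For the second assertion the point is to realise $M(L)$ inside $M(Q)$. Restriction to $L$ maps the unital subalgebra $M_{Q}(L):=\mathbb{F}\,\mathrm{id}+\mathscr{A}_{Q}(L)$ of $M(Q)$ onto $M(L)$, and its kernel is contained in $L^{ann}=\{0\}$ by density, so $M(L)\cong M_{Q}(L)$; it therefore suffices to prove $M_{Q}(L)$ semiprime (resp.\ prime) as a subalgebra of $M(Q)$. Being unital, $M_{Q}(L)$ is semiprime iff $\mu M_{Q}(L)\mu=\{0\}$ forces $\mu=0$, and prime iff $\mu M_{Q}(L)\nu=\{0\}$ forces $\mu=0$ or $\nu=0$. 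The heart of the argument is to promote such an annihilation relation from $M_{Q}(L)$ to all of $M(Q)$. Suppose, say, $\mu M_{Q}(L)\nu=\{0\}$. For $x\in L$ put $b:=\nu(x)\in L$; since $\mathrm{id}\in M_{Q}(L)$ one gets $\mu(b)=0$ and $\mu(\theta(b))=0$ for every $\theta\in M_{Q}(L)$, that is, $\mu$ annihilates the ideal of $L$ generated by $b$ (which is exactly $M_{Q}(L)(b)$). Lemma \ref{le:6.2} then gives $\mu(M(Q)(b))=\{0\}$, so $\mu\rho\nu(x)=\mu(\rho(b))=0$ for all $\rho\in M(Q)$; as $x\in L$ was arbitrary, $\mu\rho\nu$ vanishes on $L$ and hence $\mu\rho\nu=0$ by density. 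Thus $\mu M(Q)\nu=\{0\}$, and the primeness (resp., taking $\nu=\mu$, the semiprimeness) of $M(Q)$ forces $\mu=0$ or $\nu=0$, as required.

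I expect the main obstacle to be precisely this promotion step: converting a relation valid only against $M_{Q}(L)$ into the vanishing of the operators $\mu\rho\nu$ against all of $M(Q)$, which is where the ideal--generation identity of Lemma \ref{le:6.2} and the definition of density must be combined. The remaining bookkeeping --- verifying that $M(Q)(I)$ is genuinely an ideal of $Q$, that $\mathrm{Ann}_{Q}(\tilde{I})$ is an ideal, that the restriction map $M_{Q}(L)\to M(L)$ is an isomorphism, and that the scalar part of elements of $M_{Q}(L)$ causes no trouble because $M_{Q}(L)$ is unital --- is routine, and the semiprime and prime cases run in exact parallel once the promotion step is in place.
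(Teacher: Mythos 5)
There is no in-paper proof to compare against here: the paper imports this lemma verbatim from Cabrera \cite{C1}, with the citation standing in for a proof. Your argument is a correct, self-contained derivation inside the paper's own framework, using only Lemma~\ref{le:6.2} (itself also quoted from \cite{C1}) and the density hypothesis, and your decomposition into the algebra-level and multiplication-algebra-level assertions matches the definition of multiplicative (semi)primeness exactly. Both halves check out. For the first: inflating a square-zero ideal $I$ (resp.\ a pair $I,J$ with $[I,J]=\{0\}$) of $L$ to the ideal $M(Q)(I)$ of $Q$ via Lemma~\ref{le:6.2} is valid; in the semiprime case your auxiliary ideal $M(Q)(I)\cap\mathrm{Ann}_{Q}(M(Q)(I))$ is legitimately an ideal of $Q$ by Remark~\ref{re:2.4}(2), though you could bypass the annihilator altogether by a second application of Lemma~\ref{le:6.2}, exactly as in your prime case, to get $[M(Q)(I),M(Q)(I)]=\{0\}$ directly. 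For the second: the restriction map identifies your $M_{Q}(L)=\mathbb{F}\,\mathrm{id}+\mathscr{A}_{Q}(L)$ with $M(L)$, injectivity being precisely $L^{ann}=\{0\}$, and the promotion step is sound, since $M_{Q}(L)(\nu(x))$ is exactly the ideal of $L$ generated by $\nu(x)$, so Lemma~\ref{le:6.2} yields $\mu\bigl(M(Q)(\nu(x))\bigr)=\{0\}$ and density then kills each $\mu\rho\nu$; unitality of $M_{Q}(L)$ and $M(Q)$ justifies the elementwise characterizations of (semi)primeness you invoke. What your route buys is an actual proof where the paper leaves one to the literature; what the citation buys is brevity and Cabrera's full nonassociative generality --- note that Remark~\ref{re:2.4}(2) is the one place where your argument leans on the Leibniz identity, and as observed above even that is avoidable.
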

\begin{prop}\label{prop:6.5}
Suppose that $L$ is a dense subalgebra of $Q$ and $Q$ a multiplicatively semiprime algebra of quotients of $L$. Then $I \subseteq Q$ is a dense extension for every essential ideal of $I$ of $L$.
\end{prop}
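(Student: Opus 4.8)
The goal is to show that $I$ is a dense subalgebra of $Q$, i.e., that $I^{ann} = \{\mu \in M(Q) \mid \mu(I) = \{0\}\} = \{0\}$. By Lemma \ref{le:6.1}, and since a multiplicatively semiprime $Q$ certainly satisfies $\mathrm{Ann}(Q) = \{0\}$ (its multiplication algebra is semiprime, hence $\mathrm{Ann}(Q) = ran(Q) = \{0\}$ by the Leibniz analogue of Proposition \ref{prop:2.6}), it suffices to prove that any $\mu \in \mathscr{A}(Q)$ with $\mu(I) = \{0\}$ is already zero. So the plan is to fix such a $\mu$ and deduce $\mu(L) = \{0\}$, whence $\mu = 0$ by the density of $L$ in $Q$ via Lemma \ref{le:6.1} again.

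The key tool is Lemma \ref{le:6.2}: since $L$ is dense and $I$ is an ideal of $L$, from $\mu(I) = \{0\}$ we immediately obtain $\mu(M(Q)(I)) = \{0\}$. The first step is therefore to understand the submodule $M(Q)(I)$ and to show it contains enough of $L$. Here I would exploit that $I$ is essential and that $Q$ is an algebra of quotients of $L$: by Proposition \ref{prop:2.8}, essentiality of $I$ in the semiprime algebra $L$ (note $L$ is semiprime as a multiplicatively semiprime algebra, or via Lemma \ref{le:6.4}) gives $\mathrm{Ann}_{L}(I) = \{0\}$, and then Proposition \ref{prop:3.11} lifts this to $\mathrm{Ann}_{Q}(I) = \{0\}$. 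The point is that $M(Q)(I) \supseteq \mathscr{A}(Q)(I)$ contains all elements $[y, z]$ and $[z, y]$ for $y \in Q$, $z \in I$, i.e., $[Q, I] + [I, Q]$.

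The heart of the argument, which I expect to be the main obstacle, is to pass from $\mu$ vanishing on $M(Q)(I)$ to $\mu$ vanishing on all of $L$. The clean route is to show first that $\mu$ vanishes on $L$ directly. Take any $x \in L$; I want $\mu(x) = 0$. Since $\mathrm{Ann}_{Q}(I) = \{0\}$, it is enough to show $[\mu(x), I] = \{0\}$ and $[I, \mu(x)] = \{0\}$. For $z \in I$ one computes $[\mu(x), z] = R_{z}\mu(x)$ and $[z, \mu(x)] = L_{z}\mu(x)$, so it suffices to show $R_{z}\mu(L) = \{0\}$ and $L_{z}\mu(L) = \{0\}$ for every $z \in I$. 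Now $R_{z}\mu$ and $L_{z}\mu$ lie in $\mathscr{A}(Q)$, and the whole difficulty is to verify they annihilate $L$; the natural way is to show $R_{z}\mu$ and $L_{z}\mu$ vanish on $I$ (which would follow from $\mu(M(Q)(I)) = \{0\}$ since $R_{z}(I), L_{z}(I) \subseteq [I,I] + [I,I] \subseteq M(Q)(I)$ and $\mu$ kills that) and then bootstrap via the density of $L$ one more time.

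Concretely, I would argue: for $z, w \in I$, the element $R_{z}\mu(w) = [\mu(w), z]$ and $\mu(w) = 0$ since $w \in I$, so $R_{z}\mu(I) = \{0\}$; likewise $L_{z}\mu(I) = \{0\}$. Thus $R_{z}\mu, L_{z}\mu \in \mathscr{A}(Q)$ both vanish on the ideal $I$. Applying Lemma \ref{le:6.2} once more (or Lemma \ref{le:6.1} together with $\mathrm{Ann}(Q) = \{0\}$), the vanishing on $I$ forces vanishing on $M(Q)(I)$, and an essentiality/density argument upgrades this to vanishing on all of $L$, giving $R_{z}\mu(L) = L_{z}\mu(L) = \{0\}$. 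This yields $[\mu(x), I] = [I, \mu(x)] = \{0\}$ for every $x \in L$, so $\mu(x) \in \mathrm{Ann}_{Q}(I) = \{0\}$; hence $\mu(L) = \{0\}$ and finally $\mu = 0$ by Lemma \ref{le:6.1}. The subtle point to get right is the repeated interplay between the two density lemmas and the lifting of zero annihilators through Proposition \ref{prop:3.11}, making sure each application is on a genuine ideal and that essentiality of $I$ is preserved through the intersections involved.
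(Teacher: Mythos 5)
Your proposal contains a genuine gap, and it sits exactly at the step you yourself flag as ``the heart of the argument.'' Your reductions up to that point are fine: $\mathrm{Ann}_{L}(I)=\{0\}$ by Proposition \ref{prop:2.8}, hence $\mathrm{Ann}_{Q}(I)=\{0\}$ by Proposition \ref{prop:3.11}, so it suffices to show $R_{z}\mu(L)=L_{z}\mu(L)=\{0\}$ for all $z\in I$; and indeed $R_{z}\mu$ and $L_{z}\mu$ vanish on $I$. But at that point the operators $R_{z}\mu$ and $L_{z}\mu$ are in \emph{exactly} the same position as $\mu$ itself: elements of $\mathscr{A}(Q)$ that kill the essential ideal $I$, for which you want to conclude that they kill $L$. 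Your claim that ``an essentiality/density argument upgrades this to vanishing on all of $L$'' is precisely the statement of the proposition being proved, applied to $R_{z}\mu$ in place of $\mu$ --- the argument is circular. No such upgrade lemma exists in the paper: Lemma \ref{le:6.2} only converts $\nu(I)=\{0\}$ into $\nu(M(Q)(I))=\{0\}$, and $M(Q)(I)$ need not contain $L$; Lemma \ref{le:6.1} only converts vanishing on $L$ (not on $I$) into $\nu=0$. If the upgrade were available, the whole proposition would follow in one line applied directly to $\mu$, with no need for your detour through $R_{z}\mu$, $L_{z}\mu$.

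The missing ingredients are the ones the paper actually uses and your proposal never touches: Theorem \ref{thm:5.11} and the semiprimeness of $M(Q)$. The paper argues by contradiction: if $\mu\neq 0$, then since $\mathscr{A}(Q)$ is a left quotient algebra of $\mathscr{A}_{0}$ (Theorem \ref{thm:5.11}, which is where the hypotheses that $Q$ is an algebra of quotients of $L$ and $L$ is semiprime really enter), there is $\lambda\in\mathscr{A}_{0}$ with $0\neq\lambda\mu\in\mathscr{A}_{0}$; density of $L$ gives $\lambda\mu(L)\neq\{0\}$, $ran_{L}(I)=\{0\}$ gives $y\in I$ with $L_{y}\lambda\mu(L)\neq\{0\}$, and then Lemma \ref{le:5.1} plus the semiprimeness of $\mathscr{A}(Q)$ produce a nonzero product of the form $\mathscr{A}(Q)L_{y}\lambda\mu\,\mathscr{A}(Q)L_{y}\lambda\mu(L)$. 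The contradiction comes from the containment $\mu\mathscr{A}(Q)L_{y}\lambda\mu(L)\subseteq\mu\mathscr{A}(Q)([I,L])\subseteq\mu M(Q)(I)=\{0\}$, using Lemma \ref{le:6.2} and the crucial fact that $L_{y}\lambda\mu(L)\subseteq[I,L]\subseteq I$ because $\lambda\mu\in\mathscr{A}_{0}$ maps $L$ into $L$. It is this interplay --- producing an operator in $\mathscr{A}_{0}$ to pull things back into $L$, then landing inside $I$ --- that replaces the ``upgrade'' you assumed; without it, the passage from vanishing on $I$ to vanishing on $L$ cannot be made.
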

\begin{proof}
We first observe that $\rm{Ann}(\it{Q}) = \{\rm{0}\}$ since $Q$ is an algebra of quotients of $L$. Hence Lemma \ref{le:6.1} applies. Thus, let $\mu$ be in $\mathscr{A}(Q)$ such that $\mu(I) = \{0\}$, and by way of contradiction assume that $\mu \neq 0$. According to Theorem \ref{thm:5.11}, $\mathscr{A}(Q)$ is a left quotients algebra of $\mathscr{A}_{0}$ and hence there exists $\lambda$ in $\mathscr{A}_{0}$ such that $0 \neq \lambda\mu \in \mathscr{A}_{0}$. Since the extension $L \subseteq Q$ is dense, $\lambda\mu(L) \neq \{0\}$, and since $ran_{L}(I) = \{0\}$, there exists a nonzero element $y$ in $I$ such that $L_{y}\lambda\mu(L) \neq \{0\}$. Using now that $\mathscr{A}(Q)$ has no total right zero divisor according to Lemma \ref{le:5.1}, we get that $\mathscr{A}(Q)L_{y}\lambda\mu \neq \{0\}$, and this, coupled with the semiprimeness of $\mathscr{A}(Q)$, implying that $\mathscr{A}(Q)L_{y}\lambda\mu \mathscr{A}(Q)L_{y}\lambda\mu \neq \{0\}$. A second application of the fact that $L \subseteq Q$ is a dense extension yields $\mathscr{A}(Q)L_{y}\lambda\mu \mathscr{A}(Q)L_{y}\lambda\mu(L) \neq \{0\}$. However, $\mu(I) = \{0\}$ by assumption and thus implies that $\mu M(Q)(I) = \{0\}$. But this is a contradiction, because of the containments
\[\mu \mathscr{A}(Q)L_{y}\lambda\mu(L) \subseteq \mu \mathscr{A}(Q)([I, L]) \subseteq \mu \mathscr{A}(Q)(I) \subseteq \mu M(Q)(I) = \{0\}.\]
This completes the proof.
\end{proof}
\begin{cor}\label{cor:6.6}
Let $L \subseteq Q$ be a dense extension of Leibniz algebras. Suppose that $Q$ is a multiplicatively prime algebra of quotients of $L$. Then $I \subseteq Q$ is a dense extension for any nonzero ideal $I$ of $L$.
\end{cor}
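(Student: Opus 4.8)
The plan is to deduce this corollary directly from Proposition \ref{prop:6.5} by a reduction of hypotheses: in the multiplicatively prime setting every nonzero ideal of $L$ turns out to be essential, so the conclusion of Proposition \ref{prop:6.5}, which is phrased for essential ideals only, in fact covers all nonzero ideals. The whole argument is thus a matter of verifying that the stronger primeness hypothesis forces the needed essentiality, after which the real analytic work is borrowed wholesale from the proof of Proposition \ref{prop:6.5}.

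First I would observe that a multiplicatively prime algebra is in particular multiplicatively semiprime, since any prime algebra is semiprime; hence $Q$ satisfies the standing hypothesis of Proposition \ref{prop:6.5}, and the remaining ingredients used there (namely that $L$ is a dense subalgebra of $Q$ and that $Q$ is an algebra of quotients of $L$) are given. Next I would invoke Lemma \ref{le:6.4}: since $L$ is a dense subalgebra of $Q$ and $Q$ is multiplicatively prime, $L$ is itself multiplicatively prime, and in particular $L$ is a prime Leibniz algebra.

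With $L$ prime in hand, let $I$ be any nonzero ideal of $L$. By Proposition \ref{prop:2.9} we get $ran_{L}(I) = \{0\}$, and then Proposition \ref{prop:2.8} (1) converts this into the statement that $I$ is an essential ideal of $L$. Now $I$ is essential and $Q$ is a multiplicatively semiprime algebra of quotients of $L$ with $L$ dense in $Q$, so Proposition \ref{prop:6.5} applies verbatim to yield that $I \subseteq Q$ is a dense extension, which is exactly the claim.

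I do not anticipate any genuine obstacle here, as the statement is a bookkeeping corollary rather than a new theorem; the only point deserving care is to chain the implications correctly, namely $\text{multiplicatively prime} \Rightarrow \text{prime} \Rightarrow ran_{L}(I) = \{0\} \Rightarrow I \text{ essential}$, routing through Lemma \ref{le:6.4}, Proposition \ref{prop:2.9} and Proposition \ref{prop:2.8} in that order, and then simply to cite Proposition \ref{prop:6.5}.
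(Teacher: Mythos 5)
Your proposal is correct and follows essentially the same route as the paper: show that primeness of $L$ forces every nonzero ideal to be essential (via Proposition \ref{prop:2.9} and Proposition \ref{prop:2.8}) and then invoke Proposition \ref{prop:6.5}. In fact you are slightly more careful than the paper, which leaves implicit both the step ``multiplicatively prime $\Rightarrow$ multiplicatively semiprime'' for $Q$ and the appeal to Lemma \ref{le:6.4} needed to conclude that $L$ itself is prime.
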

\begin{proof}
According to Proposition \ref{prop:2.9}, we get $ran_{L}(I) = \{0\}$ for each nonzero ideal $I$ of $L$. Since $L$ is prime, we get $I$ is essential where $I$ is a nonzero ideal of $L$. The proof is completed by Proposition \ref{prop:6.5}.
\end{proof}
\begin{cor}\label{cor:6.7}
Suppose that $L$ is a dense subalgebra of $Q$ and $Q$ a multiplicative semiprime algebra of quotients of $L$. Then for every essential ideal $I$ of $L$, $lan_{\mathscr{A}(Q)}(\tilde{I}) = \{0\}$.
\end{cor}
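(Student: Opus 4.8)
The plan is to take an arbitrary $\mu \in lan_{\mathscr{A}(Q)}(\tilde{I})$ and show $\mu = 0$ by descending from the operator identity $\mu\tilde{I} = \{0\}$ to a pointwise vanishing statement, and then invoking the density furnished by Proposition \ref{prop:6.5}. First I would record the standing facts. The hypotheses here are precisely those of Proposition \ref{prop:6.5}, and moreover $L$ is semiprime: by Lemma \ref{le:6.4} the density of $L$ in the multiplicatively semiprime algebra $Q$ forces $L$ to be multiplicatively semiprime, hence semiprime. Consequently, for the essential ideal $I$ the square $I^{2} = [I, I]$ is again an essential ideal of $L$ (the remark that $I^{2} \in \mathscr{J}_{e}(L)$ whenever $L$ is semiprime and $I$ is essential), and $\mathrm{Ann}(Q) = \{0\}$ since $Q$ is an algebra of quotients of $L$, so the density framework of Lemma \ref{le:6.1} is available.

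The first substantive step is to pass from $\tilde{I}$ to a pointwise statement on $I^{2}$. Since $\mathscr{A}_{Q}(I) \subseteq \tilde{I}$, the assumption $\mu\tilde{I} = \{0\}$ yields in particular $\mu R_{x} = \mu L_{x} = 0$ for every $x \in I$. Evaluating $\mu R_{x}$ at an element $z \in I$ gives $\mu([z, x]) = \mu R_{x}(z) = 0$. As $x$ and $z$ range over $I$, the brackets $[z, x]$ span $I^{2}$, so by linearity of $\mu$ I obtain $\mu(I^{2}) = \{0\}$.

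The final step feeds this into density. Proposition \ref{prop:6.5}, applied to the essential ideal $I^{2}$, asserts that $I^{2} \subseteq Q$ is a dense extension, which means exactly that any element of $\mathscr{A}(Q) \subseteq M(Q)$ annihilating $I^{2}$ is zero. Applied to $\mu$ this forces $\mu = 0$, and since $\mu$ was arbitrary we conclude $lan_{\mathscr{A}(Q)}(\tilde{I}) = \{0\}$.

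I expect the main conceptual obstacle to be the temptation to argue entirely inside the associative algebra $\mathscr{A}(Q)$ by trying to symmetrise the already-known fact $ran_{\mathscr{A}(Q)}(\tilde{I}) = \{0\}$ established in the proof of Theorem \ref{thm:5.11}. That route is blocked, because $\tilde{I}$ is only an ideal of $\mathscr{A}_{Q}(L)$ and not a two-sided ideal of the semiprime algebra $\mathscr{A}(Q)$, so the classical coincidence of left and right annihilators of a two-sided ideal does not apply and the asymmetry between $lan$ and $ran$ is genuine. The decisive move is therefore to leave the associative setting at the right moment: evaluate the operator relation $\mu\tilde{I} = \{0\}$ on $I$ to reach $\mu(I^{2}) = \{0\}$, and then let the density of the essential ideal $I^{2}$, rather than any annihilator symmetry, finish the argument.
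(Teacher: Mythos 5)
Your proposal is correct and follows essentially the same route as the paper's proof: evaluate $\mu R_{x}$ on $I$ to obtain $\mu(I^{2}) = \{0\}$, then apply Proposition \ref{prop:6.5} to the essential ideal $I^{2}$ to conclude $\mu = 0$ from density of $I^{2} \subseteq Q$. Your version merely spells out the background the paper leaves implicit (semiprimeness of $L$ via Lemma \ref{le:6.4}, hence $I^{2} \in \mathscr{J}_{e}(L)$, and the meaning of density through Lemma \ref{le:6.1}).
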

\begin{proof}
Let $\mu \in lan_{\mathscr{A}(Q)}(\tilde{I})$. Then, if $y \in I$, we have $\mu R_{y}(I) = \{0\}$. This implies that $\mu(I^{2}) = \{0\}$. By Proposition \ref{prop:6.5} applied to the essential ideal $I^{2}$ of $L$, the extension $I^{2} \subseteq Q$ is dense, so $\mu = 0$.
\end{proof}
We close by exploring the possible converse to Theorem \ref{thm:5.11} in the presence of dense extensions of Leibniz algebras.
\begin{defn}
Given an extension of associative algebra $A \subseteq S$, we say that $S$ is strong right ideally absorbed into $A$ if for any $p, q \in S\setminus\{0\}$ there is an ideal $I$ of $A$ with $lan_{A}(I) = \{0\}$ and such that $pI \neq \{0\}$ or $qI \neq \{0\}$ and both $pI$ and $qI$ are contained in $A$.
\end{defn}
\begin{prop}\label{prop:6.9}
Let $L \subseteq Q$ be a dense extension of Leibniz algebras with $ran(Q) = lan(Q) = \{0\}$. Suppose that $\mathscr{A}(Q)$ is strong right ideally absorbed into $\mathscr{A}_{0}$. Then $Q$ is an algebra of quotients of $L$.
\end{prop}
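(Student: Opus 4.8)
The plan is to show that $Q$ is ideally absorbed into $L$, since by Theorem \ref{thm:3.10} this is equivalent to $Q$ being an algebra of quotients of $L$. The first observation I would make is that the hypotheses $ran(Q) = lan(Q) = \{0\}$ give $\mathrm{Ann}(Q) = \{0\}$, so Lemma \ref{le:6.1} becomes available: any $\mu \in \mathscr{A}(Q)$ with $\mu(L) = \{0\}$ must vanish. This density principle is the engine of the whole argument, as it converts statements about how operators act on $L$ into statements about the operators themselves, and conversely lets me detect nonvanishing of an operator by testing it on $L$.

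Next, I would fix $0 \neq q \in Q$. Because $ran(Q) = lan(Q) = \{0\}$, both $R_q$ and $L_q$ are nonzero elements of $\mathscr{A}(Q)$, so I may apply the strong right ideally absorbed hypothesis to the pair $(R_q, L_q)$. This yields an ideal $\mathcal{I}$ of $\mathscr{A}_0$ with $lan_{\mathscr{A}_0}(\mathcal{I}) = \{0\}$ such that $R_q\mathcal{I} \subseteq \mathscr{A}_0$, $L_q\mathcal{I} \subseteq \mathscr{A}_0$, and $R_q\mathcal{I} \neq \{0\}$ or $L_q\mathcal{I} \neq \{0\}$. Imitating the construction in the proof of Proposition \ref{prop:5.9}, I set $I_0 = \mathrm{span}\{\gamma(x) \mid \gamma \in \mathcal{I},\ x \in L\}$. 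Using the identities $[y, \gamma(x)] = (L_y\gamma)(x)$ and $[\gamma(x), y] = (R_y\gamma)(x)$ together with $L_y, R_y \in \mathscr{A}_0$ and the fact that $\mathcal{I}$ is an ideal of $\mathscr{A}_0$, one checks routinely that $I_0$ is an ideal of $L$ (and $I_0 \subseteq L$ since $\mathcal{I} \subseteq \mathscr{A}_0$).

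It then remains to verify the three defining conditions of ideal absorption for $I_0$. The inclusions $[q, I_0] = (L_q\mathcal{I})(L) \subseteq L$ and $[I_0, q] = (R_q\mathcal{I})(L) \subseteq L$ are immediate from $L_q\mathcal{I}, R_q\mathcal{I} \subseteq \mathscr{A}_0$, while $[I_0, q] \neq \{0\}$ or $[q, I_0] \neq \{0\}$ follows from $R_q\mathcal{I} \neq \{0\}$ or $L_q\mathcal{I} \neq \{0\}$ after invoking Lemma \ref{le:6.1} to pass from a nonzero operator to its nonzero action on $L$. The step I expect to be the crux is $\mathrm{Ann}_L(I_0) = \{0\}$: given $z \in \mathrm{Ann}_L(I_0)$, the conditions $[z, I_0] = \{0\}$ and $[I_0, z] = \{0\}$ read $(L_z\gamma)(L) = \{0\}$ and $(R_z\gamma)(L) = \{0\}$ for every $\gamma \in \mathcal{I}$, whence $L_z\gamma = R_z\gamma = 0$ by the density principle. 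Thus $L_z, R_z \in lan_{\mathscr{A}_0}(\mathcal{I}) = \{0\}$, and since $lan(Q) = ran(Q) = \{0\}$ this forces $z = 0$. With all three conditions established, $Q$ is ideally absorbed into $L$, and Theorem \ref{thm:3.10} completes the proof.
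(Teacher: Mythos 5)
Your proposal is correct and follows essentially the same route as the paper: apply the strong right ideal absorption hypothesis to the pair $(R_q, L_q)$, form the ideal $I_0 = \mathrm{span}\{\gamma(x) \mid \gamma \in \mathcal{I},\ x \in L\}$, verify that $Q$ is ideally absorbed into $L$, and conclude via Theorem \ref{thm:3.10}. If anything, you are slightly more careful than the paper, which glosses over the density argument (Lemma \ref{le:6.1}) needed to pass from $L_q\mathcal{I} \neq \{0\}$ to $L_q\mathcal{I}(L) \neq \{0\}$ and checks only the left-annihilator half of $\mathrm{Ann}_L(I_0) = \{0\}$.
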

\begin{proof}
Let $q \in Q\setminus\{0\}$. Since $ran(Q) = lan(Q) = \{0\}$, we have $R_{q} \neq 0$, $L_{q} \neq 0$. By hypothesis, there is an ideal $I$ of $\mathscr{A}_{0}$ such that $lan_{\mathscr{A}_{0}}(I) = \{0\}$ and $R_{q}I \subseteq \mathscr{A}_{0}$, $L_{q}I \subseteq \mathscr{A}_{0}$ and $R_{q}I \neq \{0\}$ or $L_{q}I \neq \{0\}$. Set $I_{0} = \{\alpha(x)\;|\;\alpha \in I, x \in L\}$, which is an ideal of $L$. Moreover, $\rm{Ann}_{\it{L}}(\it{I_{\rm{0}}}) = \{\rm{0}\}$. Indeed, suppose that an element $x$ in $L$ satisfies $[x, I_{0}] = \{0\}$. By definition, this means that $L_{x}I(L) = \{0\}$, and since the extension is dense we have that $L_{x}I = \{0\}$. Thus $L_{x} \in lan_{\mathscr{A}_{\rm{0}}}(I) = 0$. Note that $lan(L) = \{0\}$, and so $x = 0$, consequently $lan_{L}(I_{0}) = \{0\}$. Thus $\rm{Ann}_{\it{L}}(\it{I_{\rm{0}}}) = \{\rm{0}\}$.

Finally, $[q, I_{0}] = L_{q}\tilde{I}(L) \neq \{0\}$ or $[I_{0}, q] = R_{q}\tilde{I}(L) \neq \{0\}$, and both $[q, I_{0}]$ and $[I_{0}, q]$ contain in $L$, which implies that $Q$ is ideally absorbed into $L$. According to Theorem \ref{thm:3.10}, $Q$ is an algebra of quotients of $L$.
\end{proof}
\begin{cor}\label{cor:6.10}
Suppose that $L \subseteq Q$ is a dense extension of Leibniz algebras. Moreover, suppose that $L$ is semiprime and $ran(Q) = lan(Q) = \{0\}$. If $\mathscr{A}(Q)$ is strong right ideally absorbed into $\mathscr{A}_{0}$, then $\mathscr{A}(Q)$ is a left quotient algebra of $\mathscr{A}_{0}$.
\end{cor}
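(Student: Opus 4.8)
The plan is to obtain the conclusion by composing the two principal results already at our disposal, Proposition~\ref{prop:6.9} and Theorem~\ref{thm:5.11}, so that no new machinery is required; the corollary is essentially a matter of checking that the hypotheses of the earlier statements line up.

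First I would verify that all the assumptions of Proposition~\ref{prop:6.9} are met. By hypothesis $L \subseteq Q$ is a dense extension of Leibniz algebras, we are given $ran(Q) = lan(Q) = \{0\}$, and $\mathscr{A}(Q)$ is assumed to be strong right ideally absorbed into $\mathscr{A}_{0}$. These are exactly the three hypotheses of Proposition~\ref{prop:6.9}, so we may apply it to conclude that $Q$ is an algebra of quotients of $L$. Note that the semiprimeness of $L$ is not needed at this stage; it is held in reserve for the next step.

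Next I would invoke Theorem~\ref{thm:5.11}, which requires that $L$ be a semiprime subalgebra of $Q$ and that $Q$ be an algebra of quotients of $L$. The semiprimeness of $L$ is among our standing assumptions, and the status of $Q$ as an algebra of quotients of $L$ was just established. Theorem~\ref{thm:5.11} therefore applies and yields precisely that $\mathscr{A}(Q)$ is a left quotient algebra of $\mathscr{A}_{0}$, which is the desired conclusion.

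I do not anticipate any genuine obstacle, since the whole argument is a direct chaining of two earlier statements; the only point requiring attention is to confirm that the dense-extension and annihilator hypotheses feed correctly into Proposition~\ref{prop:6.9} and that its output, together with the semiprimeness of $L$, feeds correctly into Theorem~\ref{thm:5.11}. All the substantive work — producing the ideal $I_{0}$ with zero annihilator in Proposition~\ref{prop:6.9}, and constructing the left ideal $\mathscr{A}_{0}\tilde{I} + \tilde{I}$ with zero right annihilator in Theorem~\ref{thm:5.11} — has already been carried out in those proofs, so here nothing beyond the verification of hypotheses remains.
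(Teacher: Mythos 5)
Your proposal is correct and matches the paper's intended argument: the paper states this result as an immediate corollary (with no written proof) of chaining Proposition~\ref{prop:6.9} with Theorem~\ref{thm:5.11}, exactly as you do. Your verification that the hypotheses line up — dense extension, $ran(Q)=lan(Q)=\{0\}$, and strong right ideal absorption feeding into Proposition~\ref{prop:6.9}, and semiprimeness of $L$ together with its conclusion feeding into Theorem~\ref{thm:5.11} — is precisely what is needed.
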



\begin{thebibliography}{99}
\bibitem{AGG} Anquela, J. A., Garc\'{i}a, E., G\'{o}mez-Lozano, M. {\it Maximal algebras of Martindale-like quotients of strongly prime linear Jordan algebras.} J. Algebra 280(2004), No. 1, 367-383.
\bibitem{AM} Anquela, J. A., McCrimmon, K. {\it Martindale quotients of Jordan algebras.} J. Pure Appl. Algebra 213(2009), No. 3, 299-312.
\bibitem{AOU} Adashev, J. Q., Omirov, B. A., Uguz, S. {\it Leibniz Algebras Associated with Representations of Euclidean Lie Algebra.} Algebr. Represent. Theory 23(2020), No. 2, 285-301.
\bibitem{B1} Barnes, D. W. {\it Some theorems on Leibniz algebras.} Comm. Algebra 39(2011), No. 7, 2463-2472.
\bibitem{BCG} Batten Ray, C., Combs, A., Gin, N., Hedges, A., Hird, J. T., Zack, L. {\it Nilpotent Lie and Leibniz algebras.} Comm. Algebra 42(2014), No. 6, 2404-2410.
\bibitem{BH} Benayadi, S., Hidri, S. {\it Leibniz algebras with invariant bilinear forms and related Lie algebras.} Comm. Algebra 44(2016), No. 8, 3538-3556.
\bibitem{BF} Bakhrom, O., Friedrich, W. {\it A rigid Leibniz algebra with non-trivial $HL^{2}$.} J. Algebra 556(2020), 696-724.
\bibitem{BMS} Boyle, K., Misra, K. C., Stitzinger, E. {\it Complete Leibniz algebras.} J. Algebra 557(2020), 172-180.
\bibitem{CM} Cabrera, M., Mohammed, A. A. {\it Extended centroid and central closure of multiplicatively semiprime algebra.} Comm. Algebra 29(2001), No. 3, 1215-1233.
\bibitem{C1} Cabrera, M. {\it Ideals which memorize the extended centroid.} J. Algebras Appl. 1(2002), No. 3, 281-288.
\bibitem{GG2} Garc\'{i}a, E., Lozano, M. G. {\it Jordan systems of Martindale-like quotients.} J. Pure Appl. Algebra 194(2004), No. 1-2, 127-145.
\bibitem{GG1} Garc\'{i}a, E., Lozano, M. G. {\it Quotients in graded Lie algebras. Martindale-like quotients for Kantor pairs and Lie triple systems.} Algebr. Represent. Theory 16(2013), No. 1, 229-238.
\bibitem{GG} Geoffrey, M., Gaywalee, Y. {\it Leibniz algebras and Lie algebras.} SIGMA Symmetry Integrability Geom. Methods Appl. 9(2013), Paper 063, 10 pp.
\bibitem{GC} Guo, W., Chen, L. {\it  Algebras of quotients of Jordan-Lie algebras.} Comm. Algebra 44(2016), No. 9, 3788-3795.
\bibitem{L1} Loday, J. L. {\it Une version non commutative des alg\`{e}bres de Lie: les alg\`{e}bres de Leibniz.} Enseign. Math. (2)39(1993), No. 3-4, 269-293.
\bibitem{M1} Martindale, W. S. III {\it Prime rings satisfying a generalized polynomial identity.} J. Algebra 12(1969), 576-584.
\bibitem{M2} Mart\'{i}nez, C. {\it The ring of fractions of a Jordan algebra.} J. Algebra 237(2001), No. 2, 798-812.
\bibitem{M3} Montaner, F. {\it Algebras of quotients of Jordan algebras.} J. Algebra 323(2010), No. 10, 2638-2670.
\bibitem{MCL} Ma, Y., Chen, L., Lin, J. {\it Systems of quotients of Lie triple systems.} Comm. Algebra 42(2014), No. 8, 3339-3349.
\bibitem{P1} Passman, D. S. {\it Computing the symmetric ring of quotients.} J. Algebra 105(1987), 207-235.
\bibitem{PS} Perera, F., Siles Molina, M. {\it Associative and Lie algebras of quotients.} Publ. Mat. 52(2008), 129-149.
\bibitem{S1} Siles Molina, M. {\it Algebras of quotients of Lie algebras.} J. Pure Appl. Algebra 188(2004), No. 1-3, 175-188.
\bibitem{U1} Utumi, Y. {\it On quotient rings.} Osaka Math. J. 8(1956), 1-18.
\end{thebibliography}
\end{document}